\ifdefined\AddToHook
  \makeatletter
  \let\qmat@kernellabel\label
  \let\qmat@kernelrefstepcounter\refstepcounter
  \AddToHook{package/hyperref/before}{%
    \let\label\qmat@kernellabel
    \let\refstepcounter\qmat@kernelrefstepcounter
  }
  \makeatother
\fi

\documentclass[final,onefignum,onetabnum]{siamart171218}

\usepackage{comment}

\newcommand{\R}{\mathbb{R}}
\newcommand{\LR}{\mathcal{L}_{\mathrm{lr}}}
\newcommand{\Dense}{\mathcal{L}_{\mathrm{d}}}
\newcommand{\Q}{\mathcal{Q}}

\newcommand{\normmax}[1]{\left\lVert #1\right\rVert_{\max}}
\newcommand{\norminf}[1]{\left\lVert #1\right\rVert_{\infty}}
\newcommand{\nuc}[1]{\left\lVert #1\right\rVert_{*}}
\newcommand{\sideeqtag}[1]{%
  \refstepcounter{equation}%
  \label{#1}%
  \textup{(\theequation)}%
}

\usepackage{enumitem}
\usepackage{microtype}
\usepackage{amsmath, amssymb}
\usepackage{lipsum}

\usepackage{amsfonts}
\usepackage{booktabs}
\usepackage{comment}
\usepackage{mathtools}
\usepackage{subfigure}
\usepackage{comment}
\usepackage{mathrsfs}
\usepackage{graphicx}
\usepackage{epstopdf}
\usepackage{algorithmic}
\usepackage{graphicx}
\usepackage{tikz}       
\usepackage{amsmath}    
\usepackage{graphicx}  
\usepackage{amsmath}
\usepackage{algorithmic}

\usetikzlibrary{positioning} 
\usetikzlibrary{arrows.meta} 
\usetikzlibrary{calc}

\numberwithin{equation}{section}
\numberwithin{table}{section}
\numberwithin{figure}{section}

\ifpdf
  \DeclareGraphicsExtensions{.eps,.pdf,.png,.jpg}
\else
  \DeclareGraphicsExtensions{.eps}
\fi

\newsiamremark{remark}{Remark}
\newsiamremark{hypothesis}{Hypothesis}
\newsiamremark{assumption}{Assumption}
\crefname{hypothesis}{Hypothesis}{Hypotheses}
\newsiamthm{claim}{Claim}

\newcommand{\Bset}{\mathcal{B}}

\usepackage{scalerel}

\let\bigopsize\bigoplus
\def\bigoplus{{\scalerel*{\boldsymbol\oplus}{\bigopsize}}}

\newcommand{\fl}{\text{\texttt{fl}}}

\DeclarePairedDelimiter\abs{\text{abs}(}{)}

\headers{Integer-Quantized Hierarchical Matrices}{Erin Carson, Xinye Chen, and Ritesh Khan}

\title{Integer-Quantized Hierarchical Matrices with Adaptive Bit Allocation\thanks{The first and third authors are supported by the European Union (ERC, inEXASCALE, 101075632).
Views and opinions expressed are those of the authors only and do not necessarily reflect those of the European Union or the European Research Council. Neither the European Union nor the granting authority can be held responsible for them. The first author is additionally supported by the Charles University Research Centre program No. UNCE/24/SCI/005.}}

\author{
Erin Carson\thanks{Department of Numerical Mathematics, Charles University, Prague, Czech Republic
  (\email{carson@karlin.mff.cuni.cz}).}
\and Xinye Chen\thanks{Sorbonne Universit\'e, CNRS, LIP6, Paris, France
  (\email{xinye.chen@lip6.fr}).}
\and Ritesh Khan\footnotemark[1]\thanks{Department of Numerical Mathematics, Charles University, Prague, Czech Republic (\email{ritesh.khan@matfyz.cuni.cz}).}
}

\usepackage{amsopn}

\makeatletter
\newcommand*{\addFileDependency}[1]{
  \typeout{(#1)}
  \@addtofilelist{#1}
  \IfFileExists{#1}{}{\typeout{No file #1.}}
}
\makeatother

\ifpdf
\hypersetup{
  pdftitle={Integer-Quantized Hierarchical Matrices with Adaptive Bit Allocation},
  pdfauthor={E. Carson, X. Chen, and R. Khan}
}
\fi

\begin{document}

\maketitle

\begin{abstract}
We present an adaptive bitwidth allocation method for quantizing low-rank factors in hierarchical matrices in order to decrease storage while meeting a given constraint on their reconstruction error. For the bitwidth allocation we formulate a discrete storage minimization problem and show that an optimal assignment can give larger singular values at least as many bits as smaller ones. We further develop a greedy algorithm that meets the specified max-norm quantization error budget. We investigate a hierarchical block partition in which all of the non-diagonal blocks are represented by quantized low-rank factors and only dense blocks remain dense; in this way a global reconstruction bound related to $\eta_k$ and $R_k$ is obtained, with the former and latter being bounds on the truncation and reconstruction arithmetic errors, respectively.   Extensive simulations, carried out with kernel and BEM matrices under block-relative budgets, indicate that the proposed method achieves a reduction in storage of up to $44.3\times$ when compared to storing the complete matrix densely in FP64, and a reduction of up to $5.70\times$ when compared to the FP64 low-rank factors within the same partition, together with measured reconstruction errors that remained below the maximum norm bounds in all the hierarchical tests.

\end{abstract}

\begin{keywords}
Hierarchical matrices, integer quantization, mixed precision computing, matrix computations
\end{keywords}


\section{Introduction}
Hierarchical matrices were introduced by Hackbusch as a data-sparse approximation of dense boundary element matrices ~\cite{hackbusch1999sparse}. Even after low-rank compression, the stored factors can account for a substantial part of the representation. Reducing their storage precision therefore raises the question of how to control the resulting reconstruction error.
Quantization is the process of mapping continuous infinite values to a smaller set of discrete finite values, typically with reduced bitwidth number formats. For large-scale model training and low-precision inference on NVIDIA GPUs, reduced precision formats such as FP8, FP16 are increasingly used as they strike a better balance between numerical range, precision and hardware efficiency when in proper use. However, integer formats like INT8 and INT4 remain highly relevant on edge devices, CPUs, NPUs, integer-oriented accelerators and in any setting that demands strict integer-only inference, aggressive weight compression, or operates under tight bandwidth constraints. Integer arithmetic still offers clear practical advantages in that multiply-accumulate operations can stay exact during accumulation provided overflow is avoided, asymmetric quantization can make fuller use of the available code range for non-negative or skewed distributions, and an integer-only workflow can eliminate floating-point units altogether, which is especially valuable for power-efficient and resource-constrained hardware. Thus, quantization methods built around integer arithmetic continue to be a useful and active research direction.

Quantization has been verified to significantly speed up deep learning inference. Deep neural networks deployed on edge devices, such as mobile or IoT devices, are subject to stringent compute and memory constraints, while networks deployed in the cloud do not suffer such constraints but might suffer excessive latency and power consumption. The common approach to alleviating the burden of training and inference is via quantization \cite{li2023ivit,kim2021ibert}, and integer quantizations are the most commonly used. SwitchBack \cite{10.5555/3666122.3666573} achieves a speedup of 13-25\% using INT8 quantized training against the performance of bfloat16 training within 0.1 percentage points for the 1B parameter CLIP ViT-Huge.   Details can be found in the survey \cite{gholami2021survey} and references therein. 

The $\mathcal{H}$-matrices are constructed by recursively partitioning a matrix into blocks (sub-matrices) and approximating suitable off-diagonal blocks by low-rank matrices. Several hierarchical matrix formats have been extensively studied in the literature, e.g., $\mathcal{H}$ \cite{hackbusch1999sparse,hackbusch2000sparse}, $\mathcal{H}^2$ \cite{borm2009construction}, HODLR \cite{ambikasaran2019hodlrlib}, HSS \cite{chandrasekaran2007fast}, and HBS \cite{gillman2012direct} formats. In this work, we consider hierarchical matrix formats in which the low-rank factors are formed independently in a non-nested fashion. Advances in low-rank approximation techniques and hierarchical matrix formats have significantly reduced the storage requirements of dense matrices by exploiting their underlying low-rank structure. However, the low-rank factors or generators  in these representations are typically stored in double precision. This may be inefficient from a storage perspective and limit the potential for further storage savings. There has recently been significant work in reduced-precision storage of and computation with hierarchical matrices: Amestoy et al.~\cite{10.1093/imanum/drac037} use singular-value-dependent floating-point precisions for low-rank factors and analyze their use in BLR LU factorization. Kriemann~\cite{kriemann2025hierarchical} studies adaptive compression of low-rank and dense data in hierarchical matrices and its effect on matrix arithmetic. Other reduced-precision hierarchical representations are studied in \cite{ooi2020effect,carson2025mixedhodlr,khan2026hybrid}.

In this work, we develop a singular value aware bit-allocation strategy for the non-diagonal blocks of a hierarchical matrix.  Both admissible blocks and non-diagonal inadmissible blocks use quantized low-rank factors, with only diagonal blocks remaining dense.  Spatial adaptivity is supplied by the hierarchical block partition and its block ranks, whereas precision adaptivity is applied inside each factorized block.  Let $T=U\Sigma V^T$ be a rank-$r$ SVD with $\Sigma=\operatorname{diag}(\sigma_1,\ldots,\sigma_r)$, and let $b_j$ be the bitwidth assigned to component $j$. The local representation uses the balanced factorization $T=(U\Sigma^{1/2})(V\Sigma^{1/2})^T$, and quantizes each balanced column independently with its own symmetric integer scale.  Under this model the mixed-bit quantization error bound for one block is
\begin{equation*}
  E_{\mathrm{cert}}=\sum_{j=1}^r \sigma_j c(b_j),
  \qquad
  c(b)=\frac{1}{q_b}+\frac{1}{4q_b^2},
  \qquad q_b=2^{b-1}-1.
\end{equation*}

Let $B^{\mathrm q}\ge0$ denote the prescribed absolute max-norm error budget for factor quantization, and the superscript $\mathrm q$ indicates quantization. Our greedy allocation accounts for the minimum error of the components that have not yet been assigned a bitwidth. We use $\widehat T$ to denote the exact product of the reconstructed quantized factors. Theorem~\ref{thm:greedy-feasible} proves that, whenever the highest available precision is feasible, the returned local allocation satisfies
\begin{equation*}
  \normmax{T-\widehat T}\le E_{\mathrm{cert}}\le B^{\mathrm q}.
\end{equation*}

\begin{figure}[tbp]
\centering
\definecolor{blockDiag}{RGB}{181,105,116}
\definecolor{blockNear}{RGB}{222,228,231}
\definecolor{blockFar}{RGB}{245,247,248}
\definecolor{bitSixteen}{RGB}{49,96,139}
\definecolor{bitEight}{RGB}{43,135,116}
\definecolor{bitFour}{RGB}{205,151,66}

\resizebox{\linewidth}{!}{
\begin{tikzpicture}[every node/.style={font=\small}]

\node[font=\small\bfseries] at (1.9,4.12) {(a) Hierarchical partition};
\node[font=\small\bfseries] at (6.2,4.12) {(b) Bit allocation};
\node[font=\small\bfseries] at (11.65,4.12) {(c) Grouped reconstruction};
\draw[black!35] (0.4,3.88)--(3.4,3.88);
\draw[black!35] (4.5,3.88)--(8.05,3.88);
\draw[black!35] (9.35,3.88)--(13.9,3.88);

\fill[blockFar] (0.4,0.65) rectangle (3.4,3.55);
\fill[blockDiag] (0.4,2.825) rectangle (1.15,3.55);
\fill[blockDiag] (1.15,2.1) rectangle (1.9,2.825);
\fill[blockDiag] (1.9,1.375) rectangle (2.65,2.1);
\fill[blockDiag] (2.65,0.65) rectangle (3.4,1.375);
\fill[blockNear] (1.15,2.825) rectangle (1.9,3.55);
\fill[blockNear] (0.4,2.1) rectangle (1.15,2.825);
\fill[blockNear] (2.65,1.375) rectangle (3.4,2.1);
\fill[blockNear] (1.9,0.65) rectangle (2.65,1.375);

\draw[black!65] (0.4,0.65) rectangle (3.4,3.55);
\draw[black!65] (1.9,0.65)--(1.9,3.55);
\draw[black!65] (0.4,2.1)--(3.4,2.1);
\draw[black!65] (1.15,2.1)--(1.15,3.55);
\draw[black!65] (0.4,2.825)--(1.9,2.825);
\draw[black!65] (2.65,0.65)--(2.65,2.1);
\draw[black!65] (1.9,1.375)--(3.4,1.375);

\node at (0.775,3.187) {$D$};
\node at (1.525,2.462) {$D$};
\node at (2.275,1.737) {$D$};
\node at (3.025,1.012) {$D$};
\node at (1.525,3.187) {$I$};
\node at (0.775,2.462) {$I$};
\node at (3.025,1.737) {$I$};
\node at (2.275,1.012) {$I$};
\node at (2.65,2.825) {$A$};
\node at (1.15,1.375) {$A$};

\draw[->,black!60,line width=0.8pt]
  (3.52,2.12)--(4.32,2.12);

\node[
  font=\scriptsize,
  text=black!65,
  align=center
] at (3.92,2.55) {one\\block};

\node[font=\tiny] at (5.2,3.47) {$P=U\Sigma^{1/2}$};
\node[font=\tiny] at (7.12,3.47) {$Q=V\Sigma^{1/2}$};

\filldraw[fill=bitSixteen,draw=white] (4.52,1.53) rectangle (4.74,3.14);
\filldraw[fill=bitSixteen,draw=white] (4.75,1.53) rectangle (4.97,3.14);
\filldraw[fill=bitEight,draw=white]   (4.98,1.53) rectangle (5.20,3.14);
\filldraw[fill=bitEight,draw=white]   (5.21,1.53) rectangle (5.43,3.14);
\filldraw[fill=bitFour,draw=white]    (5.44,1.53) rectangle (5.66,3.14);
\filldraw[fill=bitFour,draw=white]    (5.67,1.53) rectangle (5.89,3.14);

\filldraw[fill=bitSixteen,draw=white] (6.47,2.60) rectangle (6.69,3.14);
\filldraw[fill=bitSixteen,draw=white] (6.70,2.60) rectangle (6.92,3.14);
\filldraw[fill=bitEight,draw=white]   (6.93,2.60) rectangle (7.15,3.14);
\filldraw[fill=bitEight,draw=white]   (7.16,2.60) rectangle (7.38,3.14);
\filldraw[fill=bitFour,draw=white]    (7.39,2.60) rectangle (7.61,3.14);
\filldraw[fill=bitFour,draw=white]    (7.62,2.60) rectangle (7.84,3.14);
\draw[black!55] (6.47,2.60) rectangle (7.84,3.14);

\draw[black!55] (4.52,1.53) rectangle (5.89,3.14);
\node[font=\scriptsize] at (6.18,1.18)
  {$b=(16,16,8,8,4,4)$};
\node[font=\scriptsize] at (6.18,0.72)
  {$\displaystyle\sum_j\sigma_jc(b_j)\le B^{\mathrm q}$};

\draw[->,black!60,line width=0.8pt]
  (8.26,2.12)--(9.06,2.12);

\node[
  font=\scriptsize,
  text=black!65,
  align=center
] at (8.66,2.43) {group\\by $b$};

\node[font=\scriptsize] at (10,3.57) {$b=16$};
\node[font=\tiny] at (9.64,3.28) {$\widehat P$};
\node[font=\tiny] at (10.31,2.73) {$\widehat Q^{T}$};
\filldraw[fill=bitSixteen,draw=black!35]
  (9.48,1.60) rectangle (9.80,3.07);
\draw[white,line width=0.5pt] (9.64,1.60)--(9.64,3.07);
\filldraw[fill=bitSixteen,draw=black!35]
  (10.06,2.20) rectangle (10.56,2.50);
\draw[white,line width=0.5pt] (10.06,2.35)--(10.56,2.35);
\node at (9.93,2.35) {$\times$};

\node[font=\scriptsize] at (11.65,3.57) {$b=8$};
\node[font=\tiny] at (11.29,3.28) {$\widehat P$};
\node[font=\tiny] at (11.96,2.73) {$\widehat Q^{T}$};
\filldraw[fill=bitEight,draw=black!35]
  (11.13,1.60) rectangle (11.45,3.07);
\draw[white,line width=0.5pt] (11.29,1.60)--(11.29,3.07);
\filldraw[fill=bitEight,draw=black!35]
  (11.71,2.20) rectangle (12.21,2.50);
\draw[white,line width=0.5pt] (11.71,2.35)--(12.21,2.35);
\node at (11.58,2.35) {$\times$};

\node[font=\scriptsize] at (13.30,3.57) {$b=4$};
\node[font=\tiny] at (12.94,3.28) {$\widehat P$};
\node[font=\tiny] at (13.61,2.73) {$\widehat Q^{T}$};
\filldraw[fill=bitFour,draw=black!35]
  (12.78,1.60) rectangle (13.10,3.07);
\draw[white,line width=0.5pt] (12.94,1.60)--(12.94,3.07);
\filldraw[fill=bitFour,draw=black!35]
  (13.36,2.20) rectangle (13.86,2.50);
\draw[white,line width=0.5pt] (13.36,2.35)--(13.86,2.35);
\node at (13.23,2.35) {$\times$};

\node at (10.83,2.35) {$+$};
\node at (12.48,2.35) {$+$};
\node[font=\scriptsize] at (11.65,0.78)
  {$\displaystyle\widehat T_k
    =\sum_b\widehat P_{k,b}\widehat Q_{k,b}^{T}$};

\end{tikzpicture}%
}

\caption{Overview of the hierarchical representation and local bit allocation. In (a), $D$ denotes a dense diagonal block, while admissible blocks $A$ and non-diagonal inadmissible blocks $I$ both use quantized low-rank factors. In (b), paired columns receive the same bitwidth and retain separate left and right scales. In (c), columns are grouped by bitwidth for reconstruction. Colors indicate bitwidths; the widths shown are illustrative.} 
\label{fig:method-overview}
\end{figure}

In the hierarchical setting, the quantization budget is the total block budget minus bounds for low-rank approximation and reconstruction arithmetic errors, as defined in~\eqref{eq:quant-budget}. The local result is lifted to the complete $\mathcal H$-matrix by the disjoint partition of matrix entries into blocks.  In entrywise max norm, local block errors combine by a maximum rather than a sum. We further group components assigned the same bitwidth for reconstruction, with each column retaining its own quantization scales. We account for rounding in the corresponding GEMM (General Matrix Multiply) operations and in the summation of their outputs. This explicitly separates three error sources, namely, low-rank construction, factor quantization, and finite-precision GEMM.

Our method is motivated by techniques used for model acceleration  in integer arithmetic and data compression techniques in data-sparse matrices.  Integer quantization is a well-established method for reducing the bitwidth of the number format{--}by converting floating-point numbers into integers{--}such that storage and computational costs can be reduced. Often, integer-based arithmetic, such as integer matrix multiplication, can run much faster than conventional floating-point arithmetic on hardware with dedicated integer SIMD or dot-product support (e.g., x86 CPUs with AVX2/AVX-512/VNNI and ARM CPUs with NEON/DotProd extensions), and particularly in specialized hardware implementations (e.g., UNPU \cite{8310262}, Eyeriss \cite{7551407},  Pragmatic \cite{10.1145/3123939.3123982}) and specialized hardware instructions (e.g., NVIDIA Tensor cores).  Related surveys on neural network quantization can be found in \cite{10.1145/3623402, 10028742}. Our software is publicly available at \url{https://github.com/inEXASCALE/qhmat}.

The remainder of the paper is organized as follows.  Section~\ref{sec:h-setting} gives an overview of hierarchical matrices; Section~\ref{sec:quant_models} formulates the foundations of our quantization models using integer arithmetic and  Section~\ref{sec:allocation} proposes a bit allocation strategy for matrix computations of hierarchical matrices. Section~\ref{sec:assumptions-discussion} provides the storage analysis of our bit allocation strategy. Section~\ref{sec:exps} analyzes the empirical results of experimental simulations, and Section~\ref{sec:conclude} concludes the paper.

\section{Hierarchical matrices}
\label{sec:h-setting}

Hierarchical matrix (in short, $\mathcal{H}$-matrix) formats are block low-rank approximations of dense matrices that exploit the numerical low-rank structure of suitable off-diagonal blocks, which are identified based on an admissibility condition \cite{borm2003hierarchical,weak_hackbusch2004}. $\mathcal{H}$-matrices reduce both the storage requirements and the arithmetic complexity~\cite{hackbusch1999sparse,bebendorf2008hierarchical,hackbusch2015hierarchical}. This paper restricts attention to the singular value decomposition (SVD) for low-rank approximation in hierarchical matrices. Let $M\in\R^{m\times n}$ be a matrix block and $T$ a rank-$r$ approximation represented by its SVD.  Then we have
\begin{equation}
  T = U\Sigma V^T
    = \sum_{j=1}^{r} \sigma_j u_j v_j^T,
  \label{eq:svd}
\end{equation}
where $ \sigma_1\ge \sigma_2\ge\cdots\ge\sigma_r\ge 0$ with $U^TU=I_r$ and $V^TV=I_r$. Here $U\in\R^{m\times r}$, $V\in\R^{n\times r}$, and $\Sigma=\operatorname{diag}(\sigma_1,\ldots,\sigma_r)$.  In the quantization analysis, its residual $M-T$ relative to $M$ is bounded separately. 

Let $I=\{1,\ldots,N\}$ be the matrix index set and $A\in\R^{I\times I}$ have a common row and column cluster tree. The symbol $\dot\cup$ denotes disjoint union. Then, the induced block partition has block index set $\mathcal L=\LR\,\dot\cup\,\Dense,$ where $\Dense$ contains only the diagonal blocks, with identical row and column index clusters, and $\LR$ contains all non-diagonal blocks.  The latter includes both geometrically admissible blocks and non-diagonal inadmissible blocks whose row and column clusters are terminal.  These two classes are distinguished in the storage diagnostics, but use the same quantization model. Admissible blocks may stop the block-tree recursion before both row and column clusters are terminal; geometric admissibility and the choice of storage format are distinct.

Further, each index $k\in\mathcal L$ identifies a block $\tau_k\times\sigma_k$, where $\tau_k,\sigma_k\subseteq I$ are its row and column index clusters.  Set $m_k=|\tau_k|$ and $n_k=|\sigma_k|$. The blocks form the disjoint partition
\begin{equation}
  I\times I
  =\mathop{\dot\bigcup}_{k\in\mathcal L}
   (\tau_k\times\sigma_k).
  \label{eq:leaf-partition}
\end{equation}
Consequently, each matrix entry belongs to exactly one block.  This disjointness is a property of the block partition rather than a synonym for non-diagonality or geometric inadmissibility.  Off-diagonal row and column index clusters are disjoint even when their geometric supports touch.

For every $k\in\LR$, we write
\begin{equation}
  A_k:=A|_{\tau_k\times\sigma_k}
  \approx T_k=U_k\Sigma_kV_k^T,
  \qquad T_k\in\R^{m_k\times n_k},
  \label{eq:h-local-svd}
\end{equation}
with retained rank $r_k$. Geometric inadmissibility does not prevent such a factorization, but no uniformly small rank or storage reduction is assumed for near-field blocks; the retained rank can reach the smaller block dimension. Only diagonal blocks are stored exactly, while each $T_k$ is balanced, quantized, and evaluated independently. Sections~\ref{sec:quant_models} and~\ref{sec:allocation} first establish the local theory; their notation suppresses the block index $k$ for readability.  The index is restored in the global results of Algorithm~\ref{alg:h-construction} and Theorem~\ref{thm:h-algorithm}.

The present representation retains a standard hierarchical block partition but also factorizes its non-diagonal inadmissible blocks. It provides storage accounting and reconstruction error bounds for the blocks, without invoking a geometric low-rank guarantee for the additional blocks.  Nested-basis $\mathcal H^2$ representations and forward-error bounds for hierarchical matrix--matrix multiplication require additional basis-transfer and cross-block accumulation terms and are outside the current scope.

Integer and block-scaled formats offer two complementary benefits.  First, prequantized factors reduce memory traffic, cache pressure, and communication.  Second, on hardware with INT8 or block-scaled matrix instructions, the factor--matrix products arising when a non-diagonal block is applied to an input matrix can be mapped to low-precision matrix kernels with a higher arithmetic throughput.  However, this does not necessarily lead to performance gain since quantization and scale processing introduce overhead, e.g., small off-diagonal blocks can lead to inefficient kernel launches, and a mathematically convenient placement of scales may not map to a single  GEMM supported by hardware.


\subsection{Quantization model}\label{sec:quant_models}
We analyze the error introduced by integer quantization of balanced low-rank factors. Floating-point reconstruction error is treated separately in the GEMM-aware analysis below.

For a matrix $Z$, $\normmax{Z}=\max_{i,j}|Z_{ij}|$.  For a vector $z$, $\norminf{z}=\max_i |z_i|$.  Since the columns of $U$ and $V$ are unit 2-norm vectors,
\begin{equation}
  \alpha_j:=\norminf{u_j}\le 1,
  \qquad
  \beta_j:=\norminf{v_j}\le 1.
  \label{eq:alphabeta}
\end{equation}

The singular value aware motivation follows the same general principle as mixed-precision low-rank representations such as~\cite{10.1093/imanum/drac037}: components associated with smaller singular values may be stored at lower precision because their contribution to the low-rank block is smaller.  The bounds below are derived for the symmetric integer quantizer defined next. 

For a bitwidth $b\ge2$, set  $q_b = 2^{b-1}-1$. Given a nonzero matrix or vector $Z$, define the symmetric scale $s_b(Z)=\normmax{Z} / q_b$ for matrices, with the analogous definition using $\norminf{Z}$ for vectors. The reconstructed quantized object is
\begin{equation}
  \Q_b(Z)=s_b(Z)\,\operatorname{round}\!\left(\frac{Z}{s_b(Z)}\right).
  \label{eq:quantizer}
\end{equation}
For $Z=0$, set $\Q_b(0)=0$.  Because the scale is chosen from the largest magnitude entry, no clipping is needed in exact arithmetic.

\begin{assumption}
\label{ass:quantizer}
Throughout the paper we assume:
\begin{enumerate}[label=(A\arabic*)]
  \item rounding to nearest in~\eqref{eq:quantizer};
  \item the scale factors are stored exactly, or at sufficiently high precision that scale-rounding errors are negligible relative to the modeled data quantization error;
  \item the stated bitwidth $b$ refers to the signed integer quantizer in~\eqref{eq:quantizer};
  \item when column-wise balanced quantization is used, each balanced column has its own scale for the left factor and its own scale for the right factor;
  \item the bit allocation uses the same bitwidth $b_j$ for the left and right balanced columns associated with singular component $j$;
  \item storage formulae below count the quantized payload; fixed scale metadata is discussed separately in Section~\ref{sec:assumptions-discussion}.
\end{enumerate}
\end{assumption}

\begin{lemma}
\label{lem:elementary-quant}
Under Assumption~\ref{ass:quantizer}, for a matrix $Z$ and a vector $z$, we have 
\begin{center}
\begin{tabular}{@{}r@{\quad}l@{\hspace{1.5em}}r@{\quad}l@{}}
    \sideeqtag{eq:matrix-qerr} &
    $\displaystyle
      \normmax{\Q_b(Z)-Z}
      \le \frac{\normmax{Z}}{2q_b}$, &
    \sideeqtag{eq:vector-qerr} &
    $\displaystyle
      \norminf{\Q_b(z)-z}
      \le \frac{\norminf{z}}{2q_b}$.
\end{tabular}
\end{center}

\end{lemma}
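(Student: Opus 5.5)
The proof works entrywise and reduces everything to the elementary fact that rounding to nearest moves a real number by at most one half. I will treat the matrix bound~\eqref{eq:matrix-qerr} in detail; the vector bound~\eqref{eq:vector-qerr} is the same argument with $\normmax{\cdot}$ replaced by $\norminf{\cdot}$ and a single index in place of two.

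First I dispose of the trivial case $Z=0$. By definition $\Q_b(0)=0$, so both sides of~\eqref{eq:matrix-qerr} vanish.

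For $Z\neq0$, the scale $s:=s_b(Z)=\normmax{Z}/q_b$ is strictly positive, because $b\ge2$ gives $q_b\ge1$. Fix an entry $(i,j)$ and set $x=Z_{ij}/s$. By (A1), $\operatorname{round}$ is rounding to nearest, so $|\operatorname{round}(x)-x|\le 1/2$ for every real $x$, however ties are broken. This gives
\[
  |\Q_b(Z)_{ij}-Z_{ij}| = s\,|\operatorname{round}(x)-x| \le \frac{s}{2} = \frac{\normmax{Z}}{2q_b}.
\]
Taking the maximum over $(i,j)$ yields~\eqref{eq:matrix-qerr}.

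I should also check that the rounded integer actually lies in the signed $b$-bit range referred to in (A3), so that no clipping occurs. Since $|Z_{ij}|\le\normmax{Z}$, we have $|x|\le q_b$. Because $q_b$ is an integer, rounding to nearest cannot produce a value of magnitude larger than $q_b$, so $\operatorname{round}(x)\in\{-q_b,\dots,q_b\}$. Hence the formula~\eqref{eq:quantizer} is exactly the stored quantity, and the bound above applies without an extra clipping error term.

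By (A2), the scale $s$ is used exactly, so no further error enters the reconstruction. There is no real obstacle in this argument. The only points needing care are the positivity of the scale, which relies on $b\ge2$, and the range check that rules out clipping; both are immediate.
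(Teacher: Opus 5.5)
Your proof is correct and follows the paper's argument: each rounded entry differs from its unscaled value by at most $1/2$, and multiplying by the scale $s_b(Z)=\normmax{Z}/q_b$ gives $s_b(Z)/2$. Your extra checks (the case $Z=0$, positivity of the scale, and the range argument ruling out clipping) are details the paper either leaves implicit or states in the surrounding text rather than in the proof.
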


\begin{proof}
Each rounded integer differs from its unrounded value by at most $1/2$. Multiplying by the scale $s_b(Z)=\normmax{Z}/q_b$ yields $\normmax{\Q_b(Z)-Z}\le s_b(Z)/2$.  The vector case is identical.
\end{proof}

It is convenient to define the bit-dependent coefficient
\begin{equation}
  c(b)
  := \frac{1}{q_b}+\frac{1}{4q_b^2}.
  \label{eq:cb-def}
\end{equation}
This coefficient will appear repeatedly in the product-error bounds.

\paragraph{Direct quantization of SVD factors}
\label{sec:direct}

We first analyze general factors $X\in\R^{m\times r}$ and $Y\in\R^{n\times r}$ in $T=X\Sigma Y^T$, keeping $\Sigma$ explicit.  Direct quantization of the SVD factors in~\eqref{eq:svd} is the special case $X=U$ and $Y=V$. The resulting error bound depends on $\normmax{X}$ and $\normmax{Y}$. For a matrix $Z$, $\nuc{Z}$ denotes its nuclear norm; in particular, $\nuc{\Sigma}=\sum_{j=1}^{r}\sigma_j$.

\begin{theorem}
\label{thm:direct-general}
Let
\begin{equation}
  T=X\Sigma Y^T,
  \qquad
  \widehat X=X+E,
  \qquad
  \widehat Y=Y+F,
\end{equation}
where $\Sigma=\operatorname{diag}(\sigma_1,\ldots,\sigma_r)$ with $\sigma_j\ge0$.  Supposing $\normmax{E}\le \Delta_x$ and $\normmax{F}\le \Delta_y$, with $\widehat T=\widehat X\Sigma\widehat Y^T$ and $\alpha=\normmax{X}$ and $\beta=\normmax{Y}$, we have
\begin{equation}
  \normmax{T-\widehat T}
  \le
  \left(
    \Delta_x\beta
    +\alpha\Delta_y
    +\Delta_x\Delta_y
  \right)
  \sum_{j=1}^{r}\sigma_j =
  \left(
    \Delta_x\beta
    +\alpha\Delta_y
    +\Delta_x\Delta_y
  \right)
  \nuc{\Sigma}.
  \label{eq:direct-general-bound}
\end{equation}
\end{theorem}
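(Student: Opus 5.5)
The plan is to expand the product $\widehat T=(X+E)\Sigma(Y+F)^T$ and isolate the error as a sum of three bilinear terms,
\begin{equation*}
  \widehat T - T = E\Sigma Y^T + X\Sigma F^T + E\Sigma F^T ,
\end{equation*}
which follows by distributivity since $X\Sigma Y^T=T$. The triangle inequality for $\normmax{\cdot}$ will then reduce the claim to bounding each of the three terms separately by $\Delta_x\beta\nuc{\Sigma}$, $\alpha\Delta_y\nuc{\Sigma}$, and $\Delta_x\Delta_y\nuc{\Sigma}$, respectively.

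Each term is controlled by a single entrywise estimate. For general matrices $G\in\R^{m\times r}$, $H\in\R^{n\times r}$ and the diagonal $\Sigma$ with nonnegative entries, the $(i,\ell)$ entry of $G\Sigma H^T$ is $\sum_{j=1}^r \sigma_j G_{ij}H_{\ell j}$. Hence
\begin{equation*}
  \left|(G\Sigma H^T)_{i\ell}\right|\le \sum_{j=1}^r \sigma_j |G_{ij}|\,|H_{\ell j}|
  \le \normmax{G}\,\normmax{H}\sum_{j=1}^r\sigma_j
  = \normmax{G}\,\normmax{H}\,\nuc{\Sigma},
\end{equation*}
where the nonnegativity $\sigma_j\ge0$ is what allows us to pull the absolute value inside the sum with weights $\sigma_j$ and to identify the sum with the nuclear norm. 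Taking the maximum over $(i,\ell)$ gives $\normmax{G\Sigma H^T}\le\normmax{G}\normmax{H}\nuc{\Sigma}$.

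Applying this estimate with $(G,H)=(E,Y)$, $(X,F)$, and $(E,F)$, and using the hypotheses $\normmax{E}\le\Delta_x$ and $\normmax{F}\le\Delta_y$ together with the definitions $\alpha=\normmax{X}$ and $\beta=\normmax{Y}$, yields the three bounds. Summing them gives~\eqref{eq:direct-general-bound}. No step is a genuine obstacle. The only point requiring care is the use of $\sigma_j\ge0$ in the entrywise bound; otherwise one would need $|\sigma_j|$ and the identification with $\nuc{\Sigma}$ would fail.
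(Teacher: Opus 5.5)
Your proposal is correct and follows essentially the same route as the paper: expand $\widehat T-T=E\Sigma Y^T+X\Sigma F^T+E\Sigma F^T$, bound each term entrywise using $\sigma_j\ge0$, and combine the three bounds with the triangle inequality. The only difference is cosmetic: you state the entrywise estimate once as $\normmax{G\Sigma H^T}\le\normmax{G}\,\normmax{H}\,\nuc{\Sigma}$ and apply it three times, whereas the paper writes out the three bounds directly.
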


\begin{proof}
Expanding the product gives
\begin{equation}
  \widehat T-T
  =E\Sigma Y^T+X\Sigma F^T+E\Sigma F^T.
\end{equation}
For every entry $(i,k)$,
\begin{align}
  |(E\Sigma Y^T)_{ik}|
  &\le
  \sum_{j=1}^{r}|E_{ij}|\,\sigma_j\,|Y_{kj}|
  \le \Delta_x\beta\sum_{j=1}^{r}\sigma_j,\\
  |(X\Sigma F^T)_{ik}|
  &\le \alpha\Delta_y\sum_{j=1}^{r}\sigma_j, \quad
  |(E\Sigma F^T)_{ik}|
  \le \Delta_x\Delta_y\sum_{j=1}^{r}\sigma_j.
\end{align}
Taking the maximum over all entries and applying the triangle inequality yields \eqref{eq:direct-general-bound}.
\end{proof}

\begin{corollary}
\label{cor:direct-symmetric}
If $X$ and $Y$ are quantized by~\eqref{eq:quantizer} at the same bitwidth $b$, then
\begin{equation}
  \normmax{T-\widehat T}
  \le
  \normmax{X}\,\normmax{Y}\,c(b)\,\nuc{\Sigma}.
  \label{eq:direct-symmetric-bound}
\end{equation}
\end{corollary}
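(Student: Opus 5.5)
The plan is to specialize Theorem~\ref{thm:direct-general} to the quantizer~\eqref{eq:quantizer} applied to the whole factors $X$ and $Y$ at a common bitwidth $b$. First set $\widehat X=\Q_b(X)$ and $\widehat Y=\Q_b(Y)$, so that $E=\Q_b(X)-X$ and $F=\Q_b(Y)-Y$. The matrix case of Lemma~\ref{lem:elementary-quant}, namely~\eqref{eq:matrix-qerr}, then gives the admissible choices
\begin{equation*}
  \Delta_x=\frac{\alpha}{2q_b},\qquad \Delta_y=\frac{\beta}{2q_b},
\end{equation*}
where $\alpha=\normmax{X}$ and $\beta=\normmax{Y}$. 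If $X=0$ or $Y=0$, then $T=0$ and $\widehat T=0$, and the bound is trivial. Otherwise the scales are well defined.

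Next substitute these values into the prefactor of~\eqref{eq:direct-general-bound}:
\begin{equation*}
  \Delta_x\beta+\alpha\Delta_y+\Delta_x\Delta_y
  =\frac{\alpha\beta}{2q_b}+\frac{\alpha\beta}{2q_b}+\frac{\alpha\beta}{4q_b^2}
  =\alpha\beta\left(\frac{1}{q_b}+\frac{1}{4q_b^2}\right)
  =\alpha\beta\,c(b),
\end{equation*}
using the definition~\eqref{eq:cb-def}. Multiplying by $\sum_j\sigma_j=\nuc{\Sigma}$ yields~\eqref{eq:direct-symmetric-bound}.

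There is no real obstacle here. The only point needing care is that $\Sigma$ is kept explicit and unquantized, so that $\widehat T=\widehat X\Sigma\widehat Y^T$ is exactly the object covered by Theorem~\ref{thm:direct-general}. The quantizer uses a single scale per matrix, taken from its max entry, so $\Delta_x$ and $\Delta_y$ are indeed bounded in terms of $\normmax{X}$ and $\normmax{Y}$ themselves.
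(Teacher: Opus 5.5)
Your proposal is correct and follows the paper's proof: you take $\Delta_x=\normmax{X}/(2q_b)$ and $\Delta_y=\normmax{Y}/(2q_b)$ from Lemma~\ref{lem:elementary-quant}, substitute them into Theorem~\ref{thm:direct-general}, and simplify the prefactor to $\normmax{X}\,\normmax{Y}\,c(b)$. Your separate treatment of a zero factor is harmless but unnecessary, since the convention $\Q_b(0)=0$ already gives $\Delta_x=0$ or $\Delta_y=0$ and the same substitution goes through.
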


\begin{proof}
By Lemma~\ref{lem:elementary-quant},
\begin{equation}
  \Delta_x=\frac{\alpha}{2q_b},
  \qquad
  \Delta_y=\frac{\beta}{2q_b}.
\end{equation}
Substitution into Theorem~\ref{thm:direct-general} gives
\begin{equation}
  \Delta_x\beta+\alpha\Delta_y+\Delta_x\Delta_y
  =\alpha\beta
   \left(\frac{1}{q_b}+\frac{1}{4q_b^2}\right).
\end{equation}
\end{proof}

\begin{corollary}
\label{cor:direct-svd}
For SVD factors $U$ and $V$, take $X=U$ and $Y=V$ in Corollary~\ref{cor:direct-symmetric}.  Then
\begin{equation}
  \normmax{T-\widehat T}
  \le c(b)\sum_{j=1}^{r}\sigma_j = c(b) \nuc{\Sigma}.
  \label{eq:direct-svd-bound}
\end{equation}
\end{corollary}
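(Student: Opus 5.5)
The plan is to specialize Corollary~\ref{cor:direct-symmetric} to $X=U$ and $Y=V$. All that then remains is to bound the two max-norm factors $\normmax{U}$ and $\normmax{V}$ by $1$. The corollary already gives
\begin{equation*}
  \normmax{T-\widehat T}\le \normmax{U}\,\normmax{V}\,c(b)\,\nuc{\Sigma},
\end{equation*}
where $\widehat T=\Q_b(U)\,\Sigma\,\Q_b(V)^T$ and both factors are quantized at the same bitwidth $b$ with the whole-matrix symmetric scale of~\eqref{eq:quantizer}. The hypotheses of Theorem~\ref{thm:direct-general} hold with $\Sigma$ diagonal and $\sigma_j\ge0$, which the SVD~\eqref{eq:svd} guarantees.

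Next I bound the factors. Every column $u_j$ of $U$ satisfies $\|u_j\|_2=1$, because $U^TU=I_r$. Hence each entry obeys $|U_{ij}|\le\|u_j\|_2=1$, and therefore $\normmax{U}=\max_j\alpha_j\le1$ by~\eqref{eq:alphabeta}. The same argument gives $\normmax{V}\le1$. Substituting these bounds yields $\normmax{T-\widehat T}\le c(b)\sum_j\sigma_j=c(b)\nuc{\Sigma}$, which is the claim.

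There is no real obstacle here. The only point to check is that Corollary~\ref{cor:direct-symmetric} was derived with $\alpha=\normmax{X}$ and $\beta=\normmax{Y}$ entering monotonically. Its bound is $\alpha\beta\,c(b)\nuc{\Sigma}$, with both error terms $\Delta_x=\alpha/(2q_b)$ and $\Delta_y=\beta/(2q_b)$ coming from Lemma~\ref{lem:elementary-quant}, so replacing $\alpha\beta$ by the upper bound $1$ is legitimate.

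I would also handle the degenerate cases. If $U$ or $V$ has a zero column, then $\Q_b(0)=0$ and Lemma~\ref{lem:elementary-quant} still holds trivially. If the factor is entirely zero, then $T=0=\widehat T$ and the bound holds with equality.
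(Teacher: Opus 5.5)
Your proposal is correct and matches the paper's proof: bound $\normmax{U}\le 1$ and $\normmax{V}\le 1$ using the unit 2-norm columns, then substitute into Corollary~\ref{cor:direct-symmetric}. Your degenerate-case paragraph is unnecessary but harmless, since orthonormal columns are never zero and the direct quantizer uses a single whole-factor scale rather than per-column scales.
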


\begin{proof}
Every entry of a unit 2-norm vector has magnitude at most one, hence $\normmax{U}\le1$ and $\normmax{V}\le1$.  Apply Corollary~\ref{cor:direct-symmetric}.
\end{proof}

\begin{remark}
A bound of the form
\begin{equation*}
  (\Delta_x+\Delta_y+\Delta_x\Delta_y)\sum_j\sigma_j
\end{equation*}
is valid as a relaxed bound when $\normmax{X}\le1$ and $\normmax{Y}\le1$.  For arbitrary factors, Theorem~\ref{thm:direct-general} retains the max norms of both factors.
\end{remark}

The direct bound uses one scale for each entire factor and does not distinguish the quantization errors of individual singular components.  To assign bitwidths by component, we instead quantize balanced columns separately.

\paragraph{Balanced factors and weighted quantization} 
\label{sec:balanced}

We define balanced factors, followed by a weighted quantization based on singular values. We write the balanced factors as
\begin{equation}
  P:=U\Sigma^{1/2},
  \qquad
  Q:=V\Sigma^{1/2},
  \qquad
  T=PQ^T.
  \label{eq:balanced}
\end{equation}
Thus $p_j=\sqrt{\sigma_j}\,u_j$, $q_j=\sqrt{\sigma_j}\,v_j$, and $T=\sum_{j=1}^{r}p_jq_j^T$. Accordingly, each pair $p_j,q_j$ is quantized with separate scales determined from those columns.

The role of balancing depends on the scaling.  With separate maximum-based column scales and a fixed bit map, reciprocal positive diagonal rescaling $P\mapsto PD$, $Q\mapsto QD^{-1}$ leaves the \emph{quantized product} unchanged.  Indeed, for $d>0$ the column quantizer is positively homogeneous: $\mathcal{Q}_b(dx)=d\mathcal{Q}_b(x)$. Balancing therefore exposes the singular-value weights used in the analysis, and it is not itself a mechanism for reducing ideal quantization error under this model.  A scale shared by several columns does not have this componentwise invariance, and finite-range arithmetic can also break it. Supplementary Section~\ref{sec:supp-scaling} gives the argument and a numerical control.

\begin{lemma}
\label{lem:balanced-rankone}
Given a singular component $j$ and a bitwidth $b_j$.  Quantize $p_j$ and $q_j$ independently with column-wise scales, i.e., $\widehat p_j=p_j+e_j$ and $\widehat q_j=q_j+f_j$, then
\begin{equation}
  \normmax{p_jq_j^T-\widehat p_j\widehat q_j^T}
  \le
  \sigma_j\alpha_j\beta_j\,c(b_j),
  \label{eq:rankone-balanced}
\end{equation}
where $\alpha_j=\norminf{u_j}$ and $\beta_j=\norminf{v_j}$. Consequently,
\begin{equation}
  \normmax{p_jq_j^T-\widehat p_j\widehat q_j^T}
  \le \sigma_j c(b_j).
  \label{eq:rankone-balanced-safe}
\end{equation}
\end{lemma}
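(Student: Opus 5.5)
The plan is to view the rank-one block as the special case $r=1$ of Theorem~\ref{thm:direct-general}, or simply to repeat its three-term expansion directly. Write
\begin{equation*}
  \widehat p_j\widehat q_j^T-p_jq_j^T = e_jq_j^T + p_jf_j^T + e_jf_j^T .
\end{equation*}
Each term is an outer product, so its max norm equals the product of the $\infty$-norms of its factors. The whole argument therefore reduces to bounding $\norminf{p_j}$, $\norminf{q_j}$, $\norminf{e_j}$ and $\norminf{f_j}$.

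For the factors themselves, $\norminf{p_j}=\sqrt{\sigma_j}\,\alpha_j$ and $\norminf{q_j}=\sqrt{\sigma_j}\,\beta_j$, since $p_j=\sqrt{\sigma_j}u_j$ and $q_j=\sqrt{\sigma_j}v_j$. For the errors, the column-wise scales of Assumption~\ref{ass:quantizer}(A4) let us apply the vector bound \eqref{eq:vector-qerr} of Lemma~\ref{lem:elementary-quant} to each column separately. This gives
\begin{equation*}
  \norminf{e_j}\le \frac{\sqrt{\sigma_j}\,\alpha_j}{2q_{b_j}},
  \qquad
  \norminf{f_j}\le \frac{\sqrt{\sigma_j}\,\beta_j}{2q_{b_j}},
\end{equation*}
where both use the same bitwidth $b_j$ by (A5). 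If $\sigma_j=0$, the columns vanish, $\Q_b(0)=0$, and the bound is trivial.

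Substituting these bounds, the three terms contribute
\begin{equation*}
  \sigma_j\alpha_j\beta_j\left(\frac{1}{2q_{b_j}}+\frac{1}{2q_{b_j}}+\frac{1}{4q_{b_j}^2}\right)=\sigma_j\alpha_j\beta_j\,c(b_j)
\end{equation*}
by \eqref{eq:cb-def}. The triangle inequality then yields \eqref{eq:rankone-balanced}. The consequence \eqref{eq:rankone-balanced-safe} follows because $\alpha_j,\beta_j\le1$ by \eqref{eq:alphabeta}.

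There is no real obstacle here. The only point needing care is the accounting of the factors $\sqrt{\sigma_j}$: each error term inherits a factor $\sqrt{\sigma_j}$ from its column's own scale, so their product gives exactly $\sigma_j$. This is where the independent column-wise scales are essential; a shared scale would instead bring in the largest singular value.
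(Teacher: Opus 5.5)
Your proposal is correct and follows essentially the same route as the paper: the three-term expansion $e_jq_j^T+p_jf_j^T+e_jf_j^T$, the identity $\normmax{xy^T}=\norminf{x}\norminf{y}$, and the column-wise vector bound from Lemma~\ref{lem:elementary-quant}. You then sum to obtain $\sigma_j\alpha_j\beta_j c(b_j)$ and use $\alpha_j,\beta_j\le1$ for the second inequality, exactly as the paper does; your remark on the $\sigma_j=0$ case is a harmless addition.
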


\begin{proof}
Since $\norminf{p_j}=\sqrt{\sigma_j}\alpha_j$, $\norminf{q_j}=\sqrt{\sigma_j}\beta_j$, Lemma~\ref{lem:elementary-quant} gives
\begin{equation}
  \norminf{e_j}
  \le\frac{\sqrt{\sigma_j}\alpha_j}{2q_{b_j}},
  \qquad
  \norminf{f_j}
  \le\frac{\sqrt{\sigma_j}\beta_j}{2q_{b_j}}.
\end{equation}
Expanding the rank-one product,
\begin{equation}
  \widehat p_j\widehat q_j^T-p_jq_j^T
  =e_jq_j^T+p_jf_j^T+e_jf_j^T.
\end{equation}
Using $\normmax{xy^T}=\norminf{x}\norminf{y}$, we have
\begin{equation*}
  \normmax{e_jq_j^T}
  \le \frac{\sigma_j\alpha_j\beta_j}{2q_{b_j}},\quad
  \normmax{p_jf_j^T}
  \le \frac{\sigma_j\alpha_j\beta_j}{2q_{b_j}},\quad
  \normmax{e_jf_j^T}
  \le \frac{\sigma_j\alpha_j\beta_j}{4q_{b_j}^2}.
\end{equation*}
Summing gives~\eqref{eq:rankone-balanced}.  Equation \eqref{eq:rankone-balanced-safe} follows from~\eqref{eq:alphabeta}.
\end{proof}

An adaptive assignment may use a different bitwidth for each rank-one term in $T$. The error bound for the complete block must therefore reflect these individual choices rather than a single width for the whole factor. The following theorem bounds the error from quantizing the balanced factors.

\begin{theorem}
\label{thm:balanced-mixed}
Let each singular component $j$ use a possibly different bitwidth $b_j$ and column-wise balanced quantization as in Lemma~\ref{lem:balanced-rankone}. Let $\widehat T :=\sum_{j=1}^{r}\widehat p_j\widehat q_j^T$, then
\begin{equation}
  \normmax{T-\widehat T}
  \le
  \sum_{j=1}^{r}
  \sigma_j\alpha_j\beta_j\,c(b_j).
  \label{eq:balanced-sharp}
\end{equation}
In particular, the singular values alone give the bound
\begin{equation}
  \normmax{T-\widehat T}
  \le
  \sum_{j=1}^{r}\sigma_j c(b_j).
  \label{eq:balanced-safe}
\end{equation}
\end{theorem}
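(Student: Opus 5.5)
The plan is to reduce the mixed-bit statement to the rank-one estimate of Lemma~\ref{lem:balanced-rankone} by linearity and the triangle inequality for the max norm. First, use the balanced factorization \eqref{eq:balanced} to write $T=\sum_{j=1}^{r}p_jq_j^T$. By definition, $\widehat T=\sum_{j=1}^{r}\widehat p_j\widehat q_j^T$, where each pair $(p_j,q_j)$ is quantized with its own column scales at bitwidth $b_j$ (Assumption~\ref{ass:quantizer}, items (A4)--(A5)). Subtracting term by term gives
\begin{equation*}
  T-\widehat T=\sum_{j=1}^{r}\bigl(p_jq_j^T-\widehat p_j\widehat q_j^T\bigr).
\end{equation*}

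Next, apply the triangle inequality for $\normmax{\cdot}$, which is a norm, to obtain
\begin{equation*}
  \normmax{T-\widehat T}\le\sum_{j=1}^{r}\normmax{p_jq_j^T-\widehat p_j\widehat q_j^T}.
\end{equation*}
Each summand is then bounded by \eqref{eq:rankone-balanced} with that component's own bitwidth $b_j$, which yields \eqref{eq:balanced-sharp}.

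The key point to check is that Lemma~\ref{lem:balanced-rankone} applies to each component separately, independently of the bitwidths chosen for the other components. This holds because the scales are column-wise: the quantization errors $e_j,f_j$ depend only on $p_j$, $q_j$ and $b_j$. There is no cross-term coupling, so the heterogeneity of the $b_j$ causes no difficulty. This is exactly why a shared scale, as in Corollary~\ref{cor:direct-symmetric}, is avoided here.

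Finally, \eqref{eq:balanced-safe} follows from \eqref{eq:balanced-sharp}. By \eqref{eq:alphabeta}, $\alpha_j\beta_j\le1$, and since $\sigma_j\ge0$ and $c(b_j)>0$, each term satisfies $\sigma_j\alpha_j\beta_jc(b_j)\le\sigma_jc(b_j)$. Equivalently, one can sum \eqref{eq:rankone-balanced-safe} directly. No step here is a genuine obstacle; the only care needed is the degenerate case $\sigma_j=0$. In that case $p_j=q_j=0$, and by the convention $\Q_b(0)=0$ the term contributes zero error, which is consistent with the bound.
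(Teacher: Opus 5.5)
Your proposal is correct and follows the paper's proof: write $T-\widehat T$ as the sum of the rank-one differences, apply the max-norm triangle inequality and Lemma~\ref{lem:balanced-rankone} to each term with its own $b_j$, then use $\alpha_j\beta_j\le 1$. Your handling of the $\sigma_j=0$ case via $\mathcal{Q}_b(0)=0$ is a small addition the paper leaves implicit.
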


\begin{proof}
Following $T-\widehat T =\sum_{j=1}^{r}  \left(p_jq_j^T-\widehat p_j\widehat q_j^T\right)$, we apply the triangle inequality in max norm and then Lemma~\ref{lem:balanced-rankone}.  The bound follows from $\alpha_j\beta_j\le1$.
\end{proof}

The first bound retains the sizes of the singular-vector entries; the second uses only the singular values.  We use the latter to choose bitwidths because the singular values are ordered, whereas the geometry-dependent weights need not be. 
In the following, we denote the two bounds in Theorem~\ref{thm:balanced-mixed} by $E_{\mathrm{geom}}:=\sum_{j=1}^r \sigma_j\alpha_j\beta_jc(b_j)$ and $E_{\mathrm{sv}}:=\sum_{j=1}^r\sigma_jc(b_j)$, respectively.

\begin{corollary}
\label{cor:channel-bound}
Let $\{G_1,\ldots,G_s\}$ be a disjoint partition of $\{1,\ldots,r\}$ and let every component in group $G_\ell$ use bitwidth $b_\ell$.  Then
\begin{equation}
  \normmax{T-\widehat T}
  \le
  \sum_{\ell=1}^{s} c(b_\ell)
  \sum_{j\in G_\ell}\sigma_j\alpha_j\beta_j
  \le
  \sum_{\ell=1}^{s} c(b_\ell)
  \sum_{j\in G_\ell}\sigma_j.
  \label{eq:channel-bound}
\end{equation}
Hence the error contributed by a group can be bounded using only the sum of its singular values.
\end{corollary}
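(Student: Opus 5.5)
This corollary is a regrouping of the sum in Theorem~\ref{thm:balanced-mixed}, so the plan is to apply that theorem with the bit map induced by the partition and then reorganize the terms.

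First, define the per-component bitwidths by $b_j := b_\ell$ for every $j \in G_\ell$. This is well defined because $\{G_1,\ldots,G_s\}$ is a disjoint partition of $\{1,\ldots,r\}$, so each component lies in exactly one group. Under this assignment, each pair $(p_j,q_j)$ is still quantized with its own column-wise scales, as required by Assumption~\ref{ass:quantizer}(A4)--(A5). The grouping only describes which components share a bitwidth; it does not introduce shared scales. Hence the hypotheses of Lemma~\ref{lem:balanced-rankone} and Theorem~\ref{thm:balanced-mixed} hold verbatim, and the reconstructed $\widehat T=\sum_j \widehat p_j\widehat q_j^T$ is the same matrix whether or not we sum group by group.

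Second, invoke \eqref{eq:balanced-sharp}:
\[
\normmax{T-\widehat T}\le\sum_{j=1}^r\sigma_j\alpha_j\beta_j\,c(b_j).
\]
Because the $G_\ell$ partition the index set, the sum over $j$ splits as $\sum_{\ell=1}^s\sum_{j\in G_\ell}$. Inside group $\ell$ the factor $c(b_j)=c(b_\ell)$ is constant and can be pulled out, which gives the first inequality in \eqref{eq:channel-bound}.

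Third, for the second inequality, use $\alpha_j\beta_j\le1$ from \eqref{eq:alphabeta}, together with $\sigma_j\ge0$ and $c(b_\ell)>0$ (true since $q_b\ge1$ for $b\ge2$). This lets us bound each term termwise.

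There is no real obstacle; the argument is bookkeeping. The one point to state explicitly is that grouping components by bitwidth leaves the column-wise scales, and therefore the quantized product, unchanged. This is what lets Theorem~\ref{thm:balanced-mixed} apply directly.
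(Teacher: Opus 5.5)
Your proposal is correct and matches the paper's argument. The paper states the corollary without a separate proof, as a direct regrouping of the sum in Theorem~\ref{thm:balanced-mixed} with $b_j=b_\ell$ for $j\in G_\ell$, followed by the bound $\alpha_j\beta_j\le 1$. Your remark that grouping by bitwidth keeps separate column scales is the same caveat the paper makes around Lemma~\ref{lem:shared-scale}: under the column-wise model that shared-scale bound is not the one that applies.
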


The geometry-aware weights can reduce conservatism, but they need not inherit the ordering of the singular values.  The monotone singular value allocation theorem in Section~\ref{sec:allocation} therefore uses the weights $\sigma_j$.

The preceding bounds use a separate scale for each column.  If columns assigned the same bitwidth instead share a scale, the error bound changes.

\begin{lemma}
\label{lem:shared-scale}
Let $G\subseteq\{1,\ldots,r\}$ contain $g=|G|$ components and define
\begin{equation}
  P_G=[p_j]_{j\in G}\in\R^{m\times g},
  \qquad
  Q_G=[q_j]_{j\in G}\in\R^{n\times g}.
\end{equation}
Suppose $P_G$ is quantized using one shared scale and $Q_G$ is quantized using one shared scale, both at bitwidth $b$.  Let $a_G=\normmax{P_G}$ and $d_G=\normmax{Q_G}$,  then
\begin{equation}
  \normmax{P_GQ_G^T-\widehat P_G\widehat Q_G^T}
  \le g\,a_Gd_G\,c(b).
  \label{eq:shared-scale-bound}
\end{equation}
\end{lemma}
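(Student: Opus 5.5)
The plan is to reduce this to Theorem~\ref{thm:direct-general} by treating the group as a factorization with identity middle factor. Take $X=P_G$, $Y=Q_G$ and $\Sigma=I_g$, so that $T_G:=P_GQ_G^T=X I_g Y^T$, $\nuc{I_g}=g$, $\normmax{X}=a_G$ and $\normmax{Y}=d_G$. Write $\widehat P_G=P_G+E$ and $\widehat Q_G=Q_G+F$. Because each factor is quantized with a single shared scale, Lemma~\ref{lem:elementary-quant} in its matrix form \eqref{eq:matrix-qerr} gives
\[
\normmax{E}\le \frac{a_G}{2q_b}=:\Delta_x,
\qquad
\normmax{F}\le \frac{d_G}{2q_b}=:\Delta_y .
\]
The shared scale is $s_b(P_G)=\normmax{P_G}/q_b$, so this lemma applies directly.

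Next, substitute these quantities into \eqref{eq:direct-general-bound}. This gives
\[
\Bigl(\frac{a_G}{2q_b}d_G+a_G\frac{d_G}{2q_b}+\frac{a_Gd_G}{4q_b^2}\Bigr)g
= g\,a_Gd_G\Bigl(\frac1{q_b}+\frac1{4q_b^2}\Bigr),
\]
and the right-hand side is exactly $g\,a_Gd_G\,c(b)$, as claimed. In effect this is Corollary~\ref{cor:direct-symmetric} with $\Sigma=I_g$, which can be cited directly instead of redoing the algebra.

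If $P_G$ or $Q_G$ is zero, the convention $\Q_b(0)=0$ makes both sides vanish, since the product is zero and $a_Gd_G=0$. There is no real obstacle in this argument. The one point to check is that Theorem~\ref{thm:direct-general} allows an arbitrary nonnegative diagonal middle factor, and $I_g$ qualifies. The factor $g$ comes from summing the $g$ inner-product terms entrywise, and this is the price of losing the per-column singular-value weights.
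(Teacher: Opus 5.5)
Your proof is correct and is essentially the paper's argument. The paper expands $\widehat P_G\widehat Q_G^T-P_GQ_G^T=E_GQ_G^T+P_GF_G^T+E_GF_G^T$ and applies $\normmax{AB^T}\le g\normmax{A}\normmax{B}$, which is exactly the computation inside Theorem~\ref{thm:direct-general} with $\Sigma=I_g$; citing Corollary~\ref{cor:direct-symmetric} with $\nuc{I_g}=g$ packages the same steps, and your separate zero-factor check is harmless but unnecessary, since Lemma~\ref{lem:elementary-quant} already covers $Z=0$.
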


\begin{proof}
Let $E_G=\widehat P_G-P_G$ and $F_G=\widehat Q_G-Q_G$. Lemma~\ref{lem:elementary-quant} gives
\begin{equation}
  \normmax{E_G}\le\frac{a_G}{2q_b},
  \qquad
  \normmax{F_G}\le\frac{d_G}{2q_b}.
\end{equation}
For matrices $A\in\R^{m\times g}$ and $B\in\R^{n\times g}$, we have
\begin{equation}
  \normmax{AB^T}\le g\normmax{A}\normmax{B}.
\end{equation}
Expanding
\begin{equation}
  \widehat P_G\widehat Q_G^T-P_GQ_G^T
  =E_GQ_G^T+P_GF_G^T+E_GF_G^T
\end{equation}
and applying the preceding inequality yields~\eqref{eq:shared-scale-bound}.
\end{proof}

\begin{corollary}
\label{cor:shared-svd}
For balanced SVD columns,
\begin{equation}
  a_G\le\sqrt{\sigma_{\max}(G)},
  \qquad
  d_G\le\sqrt{\sigma_{\max}(G)},
\end{equation}
where $\sigma_{\max}(G)=\max_{j\in G}\sigma_j$.  Therefore
\begin{equation}
  \normmax{P_GQ_G^T-\widehat P_G\widehat Q_G^T}
  \le g\,\sigma_{\max}(G)c(b).
  \label{eq:shared-svd-safe}
\end{equation}
\end{corollary}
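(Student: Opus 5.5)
The plan is to bound the two max norms $a_G$ and $d_G$ directly from the structure of the balanced columns, and then substitute these bounds into Lemma~\ref{lem:shared-scale}.

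\textbf{Step 1: entrywise bound on $P_G$.} Each balanced column is $p_j=\sqrt{\sigma_j}\,u_j$. By~\eqref{eq:alphabeta}, every entry of $u_j$ has magnitude at most $\alpha_j\le 1$. Hence every entry of $p_j$ has magnitude at most $\sqrt{\sigma_j}$. Taking the maximum over $j\in G$ and using that $t\mapsto\sqrt t$ is increasing on $[0,\infty)$ gives
\begin{equation*}
  a_G=\max_{j\in G}\norminf{p_j}\le\max_{j\in G}\sqrt{\sigma_j}=\sqrt{\sigma_{\max}(G)}.
\end{equation*}

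\textbf{Step 2: the same bound for $Q_G$.} The argument is identical, with $q_j=\sqrt{\sigma_j}\,v_j$ and $\beta_j\le1$ in place of $p_j$ and $\alpha_j$. This gives $d_G\le\sqrt{\sigma_{\max}(G)}$.

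\textbf{Step 3: substitution.} Lemma~\ref{lem:shared-scale} gives the bound $g\,a_Gd_G\,c(b)$, and $c(b)>0$. Its right-hand side is monotone nondecreasing in $a_G$ and in $d_G$. Inserting the two estimates from Steps~1 and~2 yields
\begin{equation*}
  g\,a_Gd_G\,c(b)\le g\,\sqrt{\sigma_{\max}(G)}\,\sqrt{\sigma_{\max}(G)}\,c(b)=g\,\sigma_{\max}(G)\,c(b),
\end{equation*}
which is~\eqref{eq:shared-svd-safe}.

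\textbf{Degenerate cases.} The only possible subtlety is when some columns, or all of them, vanish. If $P_G=0$, then $\Q_b(0)=0$ by convention, so the error is zero and the bound holds trivially. Otherwise Lemma~\ref{lem:shared-scale} applies directly. None of these steps presents a real obstacle.
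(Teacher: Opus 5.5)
Your proposal is correct and matches the paper's proof: bound the entries of the unit-norm singular vectors by one to obtain $a_G,d_G\le\sqrt{\sigma_{\max}(G)}$, then substitute into the bound of Lemma~\ref{lem:shared-scale}. Your separate treatment of vanishing columns is harmless but unnecessary, since the convention $\Q_b(0)=0$ already makes Lemma~\ref{lem:elementary-quant}, and hence Lemma~\ref{lem:shared-scale}, valid in that case.
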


\begin{proof} Each column of $U$ and $V$ has unit 2-norm, so $|u_{ij}|\le 1$ and $|v_{ij}|\le 1$. Thus $a_G,d_G\le\sqrt{\sigma_{\max}(G)}$. By substituting these bounds into~\eqref{eq:shared-scale-bound},  it gives~\eqref{eq:shared-svd-safe}. \end{proof}

\begin{remark}
For column-wise scales, the bound for a group is proportional to $\sum_{j\in G}\sigma_j$.  For one shared scale, the safe bound is proportional to $g\sigma_{\max}(G)$, which can be substantially larger when the singular values decay across the group.  Therefore the additive bound allocation rule $E_G\le c(b)\sum_{j\in G}\sigma_j$ should only be used under the column-wise scaling model (or under another model for which that inequality has been separately proved).
\end{remark}

\paragraph{Floating-point reconstruction of bitwidth groups}
\label{sec:gemm}

The bounds above compare the exact product $T$ with the exact real-arithmetic product of its reconstructed quantized factors.  In an implementation, one GEMM forms the product for each bitwidth group, and these products are summed in floating point. Both operations can introduce rounding. The analysis applies whether the stored integers are explicitly dequantized before the GEMM or an equivalent scaled kernel is used, provided that the returned product obeys the standard accumulation model below.

\begin{assumption}
\label{ass:gemm}
Let $u_{\mathrm a}$ be the unit roundoff of the GEMM accumulator. Here $\fl(\cdot)$ denotes the computed floating-point result. Products and sums do not overflow or underflow, and a length-$g$ dot product satisfies the standard componentwise bound
\begin{equation}
  \abs{\fl(x^Ty)-x^Ty}
  \le \gamma_g(u_{\mathrm a})\,|x|^T|y|,
  \qquad
  \gamma_g(u):=\frac{gu}{1-gu},
  \label{eq:gamma-def}
\end{equation}
whenever $gu<1$.  We set $\gamma_0(u)=0$.
\end{assumption}

\begin{lemma}
\label{lem:channel-gemm}
Let $G_\ell$ contain $g_\ell$ balanced columns, all stored at bitwidth $b_\ell$ but with separate column scales.  Define
\begin{equation}
  \widehat P_\ell=[\widehat p_j]_{j\in G_\ell},
  \qquad
  \widehat Q_\ell=[\widehat q_j]_{j\in G_\ell},
  \qquad
  C_\ell:=\fl(\widehat P_\ell\widehat Q_\ell^T).
\end{equation}
Under Assumptions~\ref{ass:quantizer} and~\ref{ass:gemm},
\begin{equation}
  \normmax{C_\ell-\widehat P_\ell\widehat Q_\ell^T}
  \le \gamma_{g_\ell}(u_{\mathrm a})W_\ell,
  \label{eq:gemm-channel-raw}
\end{equation}
where $W_\ell :=\normmax{|\widehat P_\ell|\,|\widehat Q_\ell|^T}.$
  
Let
\begin{equation}
  \rho(b):=1+\frac{1}{2q_b},
  \qquad
  \Omega_\ell:=\sum_{j\in G_\ell}
  \sigma_j\alpha_j\beta_j,
  \qquad
  S_\ell:=\sum_{j\in G_\ell}\sigma_j.
\end{equation}
Then
\begin{equation}
  W_\ell\le \rho(b_\ell)^2\Omega_\ell
  \le \rho(b_\ell)^2S_\ell.
  \label{eq:gemm-channel-weight}
\end{equation}
Consequently, if $T_{G_\ell}=P_{G_\ell}Q_{G_\ell}^T$, we have
\begin{equation}
  \normmax{T_{G_\ell}-C_\ell}
  \le
  \left[c(b_\ell)
  +\gamma_{g_\ell}(u_{\mathrm a})\rho(b_\ell)^2\right]
  \Omega_\ell.
  \label{eq:one-channel-total}
\end{equation}
Replacing $\Omega_\ell$ by $S_\ell$ gives an error bound depending only on the singular values.
\end{lemma}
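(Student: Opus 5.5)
The proof has three parts: apply Assumption~\ref{ass:gemm} entrywise to get \eqref{eq:gemm-channel-raw}, bound the weight $W_\ell$ column by column, and combine with Theorem~\ref{thm:balanced-mixed} through the triangle inequality.

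\textbf{Step 1: the GEMM bound \eqref{eq:gemm-channel-raw}.} Entry $(i,k)$ of $\widehat P_\ell\widehat Q_\ell^T$ is a length-$g_\ell$ dot product of row $i$ of $\widehat P_\ell$ with row $k$ of $\widehat Q_\ell$. Assumption~\ref{ass:gemm} therefore gives
\[
|(C_\ell-\widehat P_\ell\widehat Q_\ell^T)_{ik}|\le\gamma_{g_\ell}(u_{\mathrm a})\,(|\widehat P_\ell|\,|\widehat Q_\ell|^T)_{ik}.
\]
Taking the maximum over $(i,k)$ yields \eqref{eq:gemm-channel-raw}.

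\textbf{Step 2: the weight bound \eqref{eq:gemm-channel-weight}.} Write the entry as a sum over columns,
\[
(|\widehat P_\ell|\,|\widehat Q_\ell|^T)_{ik}=\sum_{j\in G_\ell}|(\widehat p_j)_i|\,|(\widehat q_j)_k|.
\]
By Lemma~\ref{lem:elementary-quant}, $\norminf{\widehat p_j}\le\norminf{p_j}+\norminf{p_j}/(2q_{b_\ell})=\rho(b_\ell)\sqrt{\sigma_j}\,\alpha_j$. The same argument gives $\norminf{\widehat q_j}\le\rho(b_\ell)\sqrt{\sigma_j}\,\beta_j$. Each summand is therefore at most $\rho(b_\ell)^2\sigma_j\alpha_j\beta_j$, and summing over $j\in G_\ell$ gives $W_\ell\le\rho(b_\ell)^2\Omega_\ell$. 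The second inequality, $\Omega_\ell\le S_\ell$, follows from $\alpha_j\beta_j\le1$ in \eqref{eq:alphabeta}.

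It matters that these are separate column scales. Each column's $\infty$-norm bound involves only its own $\sigma_j$, which gives the sum $\Omega_\ell$ rather than the shared-scale factor $g_\ell\sigma_{\max}$ of Lemma~\ref{lem:shared-scale}.

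\textbf{Step 3: the total bound \eqref{eq:one-channel-total}.} Split
\[
T_{G_\ell}-C_\ell=(T_{G_\ell}-\widehat P_\ell\widehat Q_\ell^T)+(\widehat P_\ell\widehat Q_\ell^T-C_\ell).
\]
Bound the first term by applying Theorem~\ref{thm:balanced-mixed} \eqref{eq:balanced-sharp} to the components in $G_\ell$, all of which use bitwidth $b_\ell$; this gives $c(b_\ell)\Omega_\ell$. Bound the second term by Steps 1 and 2, giving $\gamma_{g_\ell}(u_{\mathrm a})\rho(b_\ell)^2\Omega_\ell$. The triangle inequality then yields \eqref{eq:one-channel-total}, and replacing $\Omega_\ell$ by $S_\ell\ge\Omega_\ell$ gives the bound that uses only singular values.

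\textbf{Main obstacle.} The only delicate point is Step 1. The componentwise model must be applied to the dequantized real entries. If a scaled integer kernel is used instead, one has to check that its output still satisfies \eqref{eq:gamma-def} with respect to $|\widehat P_\ell|\,|\widehat Q_\ell|^T$, which is exactly the hypothesis stated before the lemma. The remaining steps are routine.
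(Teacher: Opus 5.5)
Your proposal is correct and matches the paper's proof step for step: you apply \eqref{eq:gamma-def} entrywise, bound the columns by $\norminf{\widehat p_j}\le\rho(b_\ell)\sqrt{\sigma_j}\alpha_j$ and $\norminf{\widehat q_j}\le\rho(b_\ell)\sqrt{\sigma_j}\beta_j$ summed over $G_\ell$, and use the same triangle-inequality split. The only cosmetic difference is that the paper cites Corollary~\ref{cor:channel-bound} for the quantization term, whereas you apply Theorem~\ref{thm:balanced-mixed} to the components in $G_\ell$; both rest on the same per-component argument of Lemma~\ref{lem:balanced-rankone}.
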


\begin{proof}
Applying~\eqref{eq:gamma-def} entrywise to the group's GEMM gives \eqref{eq:gemm-channel-raw}.  Lemma~\ref{lem:elementary-quant} and the balanced column norms give
\begin{equation}
  \norminf{\widehat p_j}
  \le \sqrt{\sigma_j}\alpha_j\rho(b_\ell),
  \qquad
  \norminf{\widehat q_j}
  \le \sqrt{\sigma_j}\beta_j\rho(b_\ell).
\end{equation}
Therefore, for every entry,
\begin{align}
  (|\widehat P_\ell|\,|\widehat Q_\ell|^T)_{ik}
  &\le \sum_{j\in G_\ell}
  \norminf{\widehat p_j}\norminf{\widehat q_j} \le \rho(b_\ell)^2\Omega_\ell,
\end{align}
which proves~\eqref{eq:gemm-channel-weight}.  Finally, split
\begin{equation}
  T_{G_\ell}-C_\ell
  =\left(T_{G_\ell}-\widehat P_\ell\widehat Q_\ell^T\right)
   +\left(\widehat P_\ell\widehat Q_\ell^T-C_\ell\right),
\end{equation}
and combine Corollary~\ref{cor:channel-bound} with \eqref{eq:gemm-channel-raw}.
\end{proof}
Using the above lemma, we can now state a bound on the local reconstruction error. 
\begin{theorem}
\label{thm:local-gemm}
Let $G_1,\ldots,G_s$ be the bitwidth groups and let every $C_\ell$ be formed as in Lemma~\ref{lem:channel-gemm}.  Suppose their products are accumulated sequentially in the same precision,
\begin{equation}
  \widetilde T:=\fl\!\left(\sum_{\ell=1}^{s}C_\ell\right).
\end{equation}
Then
\begin{equation}
\begin{aligned}
  \normmax{T-\widetilde T}
  \le \sum_{\ell=1}^{s}
  \Bigl[&c(b_\ell)
  +\gamma_{g_\ell}(u_{\mathrm a})\rho(b_\ell)^2\\
  &+\gamma_{s-1}(u_{\mathrm a})
   \bigl(1+\gamma_{g_\ell}(u_{\mathrm a})\bigr)
   \rho(b_\ell)^2\Bigr]\Omega_\ell.
  \label{eq:local-gemm-total}
\end{aligned}
\end{equation}
The same expression with $\Omega_\ell$ replaced by $S_\ell$ is a conservative singular-value-only error bound.
\end{theorem}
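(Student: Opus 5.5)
We split the total error into the error of the group products and the error of the final floating-point summation:
\begin{equation*}
  T-\widetilde T
  =\sum_{\ell=1}^{s}\bigl(T_{G_\ell}-C_\ell\bigr)
   +\Bigl(\sum_{\ell=1}^{s}C_\ell-\fl\Bigl(\sum_{\ell=1}^{s}C_\ell\Bigr)\Bigr).
\end{equation*}
This uses $T=\sum_\ell T_{G_\ell}$, which holds because the groups partition $\{1,\ldots,r\}$. We then apply the triangle inequality in the max norm.

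\textbf{First term.} Lemma~\ref{lem:channel-gemm}, via \eqref{eq:one-channel-total}, bounds each group by $[c(b_\ell)+\gamma_{g_\ell}(u_{\mathrm a})\rho(b_\ell)^2]\Omega_\ell$. Summing over $\ell$ gives the first two terms of \eqref{eq:local-gemm-total}.

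\textbf{Second term.} For fixed entry $(i,k)$, the summation is a sequential sum of $s$ numbers $(C_\ell)_{ik}$. By the standard recursive summation bound, under the same accumulation model as Assumption~\ref{ass:gemm} (a sum is a dot product with a ones vector), we get
\begin{equation*}
  \Bigl|\fl\Bigl(\sum_\ell (C_\ell)_{ik}\Bigr)-\sum_\ell (C_\ell)_{ik}\Bigr|
  \le\gamma_{s-1}(u_{\mathrm a})\sum_{\ell}|(C_\ell)_{ik}|.
\end{equation*}
Here we use $s-1$ additions, and $\gamma_0=0$ covers the case $s=1$.

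\textbf{Bounding $|(C_\ell)_{ik}|$.} From \eqref{eq:gemm-channel-raw},
\begin{equation*}
  |(C_\ell)_{ik}|\le|(\widehat P_\ell\widehat Q_\ell^T)_{ik}|+\gamma_{g_\ell}(u_{\mathrm a})W_\ell\le(1+\gamma_{g_\ell}(u_{\mathrm a}))W_\ell,
\end{equation*}
since $|(\widehat P_\ell\widehat Q_\ell^T)_{ik}|\le(|\widehat P_\ell||\widehat Q_\ell|^T)_{ik}\le W_\ell$. Then \eqref{eq:gemm-channel-weight} gives $W_\ell\le\rho(b_\ell)^2\Omega_\ell$. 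Summing over $\ell$ and maximizing over $(i,k)$ yields the third term.

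\textbf{$S_\ell$ version.} This follows directly from $\Omega_\ell\le S_\ell$, since $\alpha_j\beta_j\le1$.

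\textbf{Main obstacle.} The only delicate point is justifying the summation bound with $\gamma_{s-1}$ from the stated assumption, which is phrased for dot products. We handle it by treating the sum as $\fl(\mathbf 1^Tx)$: the multiplications by one are exact, so the error is at most $\gamma_{s-1}$, not $\gamma_s$. Alternatively, we can cite the standard recursive summation analysis, which requires $(s-1)u_{\mathrm a}<1$.
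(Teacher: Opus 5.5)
Your proof is correct and follows the paper's argument: you split off the summation error and bound it by $\gamma_{s-1}(u_{\mathrm a})$ times the sum of the group products. You then bound each $C_\ell$ by $(1+\gamma_{g_\ell}(u_{\mathrm a}))W_\ell$ via \eqref{eq:gemm-channel-raw} and insert \eqref{eq:gemm-channel-weight}; working entrywise with $\sum_\ell|(C_\ell)_{ik}|$ instead of the paper's $\sum_\ell\normmax{C_\ell}$ gives the same bound. The one point to tidy is the one you flagged yourself. Applying Assumption~\ref{ass:gemm} as a black box to $\fl(\mathbf{1}^Tx)$ would give only $\gamma_s$, so the $\gamma_{s-1}$ factor should rest on the standard recursive-summation analysis under the usual rounding model, which is your second route and the one the paper uses.
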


\begin{proof}
The first two terms account for factor quantization and rounding in each GEMM, as in Lemma~\ref{lem:channel-gemm}. Standard sequential summation gives
\begin{equation}
  \normmax{\fl \left(\sum_\ell C_\ell \right)-\sum_\ell C_\ell}
  \le \gamma_{s-1}(u_{\mathrm a})\sum_{\ell=1}^{s}\normmax{C_\ell}.
\end{equation}
Moreover,~\eqref{eq:gemm-channel-raw} implies
\begin{equation}
  \normmax{C_\ell}
  \le \bigl(1+\gamma_{g_\ell}(u_{\mathrm a})\bigr)W_\ell.
\end{equation}
Insert~\eqref{eq:gemm-channel-weight} and add the three error contributions.
\end{proof}

The coefficient $c(b_\ell)$ is caused by quantizing the two balanced factors; $\gamma_{g_\ell}\rho(b_\ell)^2$ is the rounding error inside the GEMM for group $G_\ell$; and the final term is caused by adding the $s$ group products.  Higher accumulation precision reduces the last two terms without changing the stored factor bits.  If the integer dot products are accumulated exactly, the GEMM term is zero for that stage, but rounding in scale application and summation of group products must still be modeled in the precision where those operations are performed. In addition, pairwise or compensated summation may replace the sequential-summation factor $\gamma_{s-1}$ by the corresponding sharper bound.

\subsection{Singular value aware bit allocation}
\label{sec:allocation}

The error bound for a reconstructed block has three contributions: low-rank approximation, factor quantization, and reconstruction arithmetic. Bit allocation uses the part of the block budget left after the other two contributions have been accounted for.

\begin{lemma}
\label{lem:trunc-quant}
Let $A_k$ be a matrix block, let $T_k$ be its low-rank approximation, and let $\widehat T_k$ be the exact real-arithmetic product of its reconstructed quantized factors.  Let $\widetilde T_k$ denote the matrix actually returned by the GEMMs for the bitwidth groups and their summation.  Let $\eta_k$ be any upper bound satisfying $\eta_k\ge\normmax{A_k-T_k}$, and define
\begin{equation}
  Q_k:=\normmax{T_k-\widehat T_k},
  \qquad
  R_k:=\normmax{\widehat T_k-\widetilde T_k}.
\end{equation}
Then
\begin{equation}
  \normmax{A_k-\widetilde T_k}
  \le \eta_k+Q_k+R_k.
  \label{eq:block-total-error}
\end{equation}
\end{lemma}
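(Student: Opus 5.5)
The plan is a telescoping decomposition followed by the triangle inequality in the entrywise max norm. I will write
\begin{equation*}
  A_k-\widetilde T_k
  =(A_k-T_k)+(T_k-\widehat T_k)+(\widehat T_k-\widetilde T_k).
\end{equation*}
The three terms are the low-rank construction error, the factor quantization error, and the reconstruction arithmetic error.

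Next I will note that $\normmax{\cdot}=\max_{i,j}|\cdot_{ij}|$ is a norm on $\R^{m_k\times n_k}$. It therefore satisfies the triangle inequality, since $|x_{ij}+y_{ij}+z_{ij}|\le|x_{ij}|+|y_{ij}|+|z_{ij}|\le\normmax{X}+\normmax{Y}+\normmax{Z}$ holds for every entry before taking the maximum. Applying this to the decomposition gives
\begin{equation*}
  \normmax{A_k-\widetilde T_k}\le\normmax{A_k-T_k}+Q_k+R_k,
\end{equation*}
directly from the definitions of $Q_k$ and $R_k$.

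Finally, I will use the hypothesis $\eta_k\ge\normmax{A_k-T_k}$ to replace the first term and obtain \eqref{eq:block-total-error}.

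There is no substantive obstacle. The only point to check is that all three matrices $A_k$, $T_k$, $\widehat T_k$, $\widetilde T_k$ have the same dimensions $m_k\times n_k$, so the differences are well defined. The lemma makes no claim about the sizes of $Q_k$ and $R_k$; those are controlled separately by Theorem~\ref{thm:balanced-mixed} and Theorem~\ref{thm:local-gemm}.
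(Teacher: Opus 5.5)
Your proposal is correct and is essentially the paper's proof: the paper also inserts $T_k$ and $\widehat T_k$ between $A_k$ and $\widetilde T_k$, applies the triangle inequality in the max norm, and bounds $\normmax{A_k-T_k}$ by $\eta_k$. Your entrywise justification of the triangle inequality and the dimension check simply make explicit what the paper leaves implicit.
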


\begin{proof}
Insert $T_k$ and $\widehat T_k$ between $A_k$ and $\widetilde T_k$ and apply the triangle inequality.
\end{proof}

For a bit assignment $b_{k,1},\ldots,b_{k,r_k}$, define the quantization error bound
\begin{equation}
  Q_k^{\mathrm{cert}}
  :=\sum_{j=1}^{r_k}\sigma_{k,j}c(b_{k,j}),
  \label{eq:qk-cert}
\end{equation}
so that $Q_k\le Q_k^{\mathrm{cert}}$ by Theorem~\ref{thm:balanced-mixed}.  Let $R_k^{\mathrm{cert}}$ be the sum of the two GEMM-related terms in~\eqref{eq:local-gemm-total}.  In the conservative singular-value-only form, if block $k$ has $s_k$ nonempty bitwidth groups, this is
\begin{equation}
\begin{aligned}
  R_k^{\mathrm{cert}}
  :=\sum_{\ell=1}^{s_k}
  \Bigl[&\gamma_{g_{k,\ell}}(u_{\mathrm a})
          \rho(b_{k,\ell})^2+\gamma_{s_k-1}(u_{\mathrm a})
   \bigl(1+\gamma_{g_{k,\ell}}(u_{\mathrm a})\bigr)
   \rho(b_{k,\ell})^2\Bigr]
   \sum_{j\in G_{k,\ell}}\sigma_{k,j}.
  \label{eq:rk-cert}
\end{aligned}
\end{equation}
Then $R_k\le R_k^{\mathrm{cert}}$.

\begin{remark}
\label{rem:quant-budget}
Suppose the desired total block error constraint is
\begin{equation}
  \normmax{A_k-\widetilde T_k}
  \le B_k^{\mathrm{tot}}.
\end{equation}
Choose a nonnegative allowance $B_k^{\mathrm{gemm}}$ for GEMM rounding and define
\begin{equation}
  B_k^{\mathrm{q}}
  :=B_k^{\mathrm{tot}}-\eta_k-B_k^{\mathrm{gemm}}.
  \label{eq:quant-budget}
\end{equation}
If $B_k^{\mathrm q}\ge0$, it is sufficient to impose
\begin{equation}
  Q_k^{\mathrm{cert}}\le B_k^{\mathrm q},
  \qquad
  R_k^{\mathrm{cert}}\le B_k^{\mathrm{gemm}}.
  \label{eq:budget-two-tests}
\end{equation}
If $\eta_k+B_k^{\mathrm{gemm}}>B_k^{\mathrm{tot}}$, the low-rank approximation or the GEMM realization must first be improved.  In exact arithmetic, $B_k^{\mathrm{gemm}}=0$ recovers the quantization-only budget $B_k^{\mathrm q}=B_k^{\mathrm{tot}}-\eta_k$.
\end{remark}

Each entry of $A$ belongs to exactly one block of the partition. Consequently, the global max-norm error is the largest of the blockwise max-norm errors. We present the global reconstruction error in the following theorem.

\begin{theorem}
\label{thm:h-global}
Let the blocks satisfy~\eqref{eq:leaf-partition}.  Store every diagonal block exactly and, for each non-diagonal block $k\in\LR$, including the inadmissible ones, construct $\widetilde T_k$ as above.  Let $\widetilde A$ be the matrix obtained by placing these block representations in their original positions, and write $\widetilde A_k$ for its restriction to block $k$.  Define the maximum over an empty $\LR$ as zero.  Then
\begin{equation}
  \normmax{A-\widetilde A}
  =\max_{k\in\mathcal L}\normmax{A_k-\widetilde A_k}
  \le
  \max_{k\in\LR}
  \left(\eta_k+Q_k^{\mathrm{cert}}+R_k^{\mathrm{cert}}\right).
  \label{eq:h-global-bound}
\end{equation}
In particular, if $\eta_k+Q_k^{\mathrm{cert}}+R_k^{\mathrm{cert}} \le B_k^{\mathrm{tot}}$ for every factorized block, then
\begin{equation}
  \normmax{A-\widetilde A}
  \le \max_{k\in\LR}B_k^{\mathrm{tot}}.
  \label{eq:h-global-budget}
\end{equation}
\end{theorem}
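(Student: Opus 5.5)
The plan is to prove the equality in \eqref{eq:h-global-bound} by a purely combinatorial argument, and then bound each block separately.

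\textbf{Step 1: the equality.} By \eqref{eq:leaf-partition}, every index pair $(i,j)\in I\times I$ lies in exactly one block $\tau_k\times\sigma_k$. Hence the entry $(A-\widetilde A)_{ij}$ is the corresponding entry of $A_k-\widetilde A_k$. Taking the maximum of $|(A-\widetilde A)_{ij}|$ over all $(i,j)$ can therefore be regrouped as a maximum over $k\in\mathcal L$ of the maxima within each block. This gives
\[
\normmax{A-\widetilde A}=\max_{k\in\mathcal L}\normmax{A_k-\widetilde A_k}.
\]
Disjointness guarantees that no entry is counted twice. Covering guarantees that none is missed.

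\textbf{Step 2: blockwise bounds.} Split $\mathcal L=\LR\,\dot\cup\,\Dense$.
\begin{itemize}
\item For $k\in\Dense$, the block is stored exactly, so $\widetilde A_k=A_k$ and the contribution is $0$.
\item For $k\in\LR$, we have $\widetilde A_k=\widetilde T_k$. Lemma~\ref{lem:trunc-quant} gives $\normmax{A_k-\widetilde T_k}\le \eta_k+Q_k+R_k$.
\item Theorem~\ref{thm:balanced-mixed}, in its singular-value form \eqref{eq:balanced-safe}, gives $Q_k\le Q_k^{\mathrm{cert}}$.
\item The bound $R_k\le R_k^{\mathrm{cert}}$ comes from the GEMM and summation terms in the proof of Theorem~\ref{thm:local-gemm}, with $\Omega_\ell\le S_\ell$.
\end{itemize}
All quantities are nonnegative, so including the zero contributions of diagonal blocks does not change the maximum. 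Hence $\max_{k\in\mathcal L}$ reduces to $\max_{k\in\LR}$ of $\eta_k+Q_k^{\mathrm{cert}}+R_k^{\mathrm{cert}}$. The convention that the maximum over an empty $\LR$ is zero covers the degenerate case where all blocks are diagonal.

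\textbf{Step 3: the budget form.} Under the hypothesis $\eta_k+Q_k^{\mathrm{cert}}+R_k^{\mathrm{cert}}\le B_k^{\mathrm{tot}}$ for each $k\in\LR$, take the maximum over $k\in\LR$ on both sides and combine with \eqref{eq:h-global-bound}. This yields \eqref{eq:h-global-budget}.

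\textbf{Main obstacle.} The only delicate point is justifying $R_k\le R_k^{\mathrm{cert}}$. Here $R_k$ compares $\widehat T_k$ with $\widetilde T_k$, whereas Theorem~\ref{thm:local-gemm} bounds the total $T-\widetilde T$. I would reuse the decomposition in that proof: $\widehat T_k-\widetilde T_k$ equals $\sum_\ell(\widehat P_\ell\widehat Q_\ell^T-C_\ell)$ plus the summation error. These two pieces are bounded by \eqref{eq:gemm-channel-raw}, \eqref{eq:gemm-channel-weight}, and the sequential-summation estimate, which give exactly the two terms in \eqref{eq:rk-cert}. This applies under Assumption~\ref{ass:gemm}; I would state that assumption explicitly in the proof.
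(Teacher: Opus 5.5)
Your proposal is correct and follows essentially the same route as the paper's proof. Disjointness of the partition \eqref{eq:leaf-partition} reduces the global max norm to a maximum of blockwise max norms, diagonal blocks contribute zero, and Lemma~\ref{lem:trunc-quant} combined with $Q_k\le Q_k^{\mathrm{cert}}$ and $R_k\le R_k^{\mathrm{cert}}$ bounds each factorized block; your explicit derivation of $R_k\le R_k^{\mathrm{cert}}$ from the GEMM and summation pieces of the proof of Theorem~\ref{thm:local-gemm} fills in what the paper simply asserts right after \eqref{eq:rk-cert}.
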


\begin{proof}
Because the blocks do not overlap, each entry of $A-\widetilde A$ belongs to exactly one block.  Its global maximum magnitude is therefore the maximum of the block-wise max norms.  Exact diagonal blocks contribute zero, and Lemma~\ref{lem:trunc-quant} together with the two local error bounds gives the inequality.
\end{proof}

The bound also controls the exact application of the stored representation. Let $\widehat A$ contain the exact products of the reconstructed quantized factors, with diagonal blocks stored exactly. Then
\begin{equation*}
\|A-\widehat A\|_{\max}
\le E_{\mathrm{rep}}
:=\max_{k\in\LR}(\eta_k+Q_k^{\mathrm{cert}}),
\end{equation*}
with the maximum taken as zero when $\LR$ is empty, i.e., $E_{\mathrm{rep}} =0$ when $\LR=\varnothing$.

For $X\in\mathbb R^{N\times p}$, it follows that
\begin{equation*}
 \|AX-\widehat A X\|_{\max}
 \le E_{\mathrm{rep}}
       \max_{1\le\ell\le p}\sum_{j=1}^{N}|X_{j\ell}|.
\end{equation*}
Indeed, each entry of $(A-\widehat A)X$ is a sum of products, each bounded by $E_{\mathrm{rep}}|X_{j\ell}|$. The computed application also incurs rounding in the factor--matrix products and block accumulation, and those errors are considered separately in supplementary Section~\ref{sec:supp-application}.

\paragraph{Choosing block-wise error bounds}

A direct global-relative choice is
\begin{equation}
  B_k^{\mathrm{tot}}
  =\varepsilon_{\mathrm{tot}}\normmax{A}, \qquad k\in\LR.
  \label{eq:block-global-budget}
\end{equation}
By Theorem~\ref{thm:h-global}, this yields $\normmax{A-\widetilde A}\le \varepsilon_{\mathrm{tot}}\normmax{A}$.  A locally scaled alternative is
\begin{equation}
  B_k^{\mathrm{tot}}
  =\varepsilon_{\mathrm{tot}}\normmax{A_k}.
  \label{eq:block-relative-budget}
\end{equation}
Since $\normmax{A_k}\le\normmax{A}$, the same global relative guarantee follows, while every block is controlled relative to its own scale.  For very small far-field blocks, a less restrictive choice of block budget is
\begin{equation}
  B_k^{\mathrm{tot}}
  =\varepsilon_{\mathrm{tot}}
   \max\!\left\{\normmax{A_k},\gamma\normmax{A}\right\},
  \qquad 0<\gamma\le1,
  \label{eq:hybrid-budget}
\end{equation}
which also remains no larger than $\varepsilon_{\mathrm{tot}}\normmax{A}$. When $A$ is not explicitly formed, as in some kernel- or matvec-based hierarchical matrix constructions, the global-relative and hybrid choices require $\normmax{A}$, whereas the block-relative choice uses the norms of the individual blocks. Replacing a norm by an upper bound makes the resulting guarantee relative to that bound.

If only the low-rank residual bound $\eta_k\le\varepsilon_{\mathrm{lr},k}\normmax{A}$ is known, the global-relative block budget gives the conservative quantization budget
\begin{equation}
  B_k^{\mathrm{q}}
  =\left(\varepsilon_{\mathrm{tot}}-\varepsilon_{\mathrm{lr},k}\right)
   \normmax{A}-B_k^{\mathrm{gemm}},
  \label{eq:relative-split-budget}
\end{equation}
which is valid when the right-hand side is nonnegative.

\begin{remark}
The separate bounds give a sufficient condition: the allocator first enforces $Q_k^{\mathrm{cert}}\le B_k^{\mathrm q}$ and the resulting groups are then checked against $R_k^{\mathrm{cert}}\le B_k^{\mathrm{gemm}}$.  Since $R_k^{\mathrm{cert}}$ depends on the final group sizes and bitwidths, a less conservative implementation could instead allocate against the joint condition
\begin{equation*}
  Q_k^{\mathrm{cert}}+R_k^{\mathrm{cert}}
  \le B_k^{\mathrm{tot}}-\eta_k.
\end{equation*}
This couples storage selection with the arithmetic realization and is therefore left outside the present greedy allocation analysis.
\end{remark}

\begin{remark}
In \eqref{eq:h-global-bound}, the error contributions are added within each block, while the global entrywise max-norm bound takes the maximum over blocks.  Thus, for the entrywise max-norm representation error, a global budget need not be divided by $|\LR|$.  Such a division would be unnecessarily conservative.  This conclusion depends on the disjoint block partition and does not apply unchanged to a norm or operation in which several block contributions are summed into the same output entry.
\end{remark}

\begin{remark}
The global theorem assumes exact diagonal blocks relative to the reference matrix.  If these blocks are converted to a lower working precision or quantized, and if $D_k$ bounds the resulting error on each diagonal block, then the right-hand side of~\eqref{eq:h-global-bound} is replaced by
\begin{equation*}
  \max\left\{
  \max_{k\in\LR}\left(\eta_k+Q_k^{\mathrm{cert}}+R_k^{\mathrm{cert}}\right),
  \max_{k\in\Dense}D_k
  \right\}.
\end{equation*}
\end{remark}

The geometry-dependent bound can be sharper, but its weights need not follow the order of the singular values.  For the allocation, we therefore use the bound
\begin{equation}
  E_{\mathrm{sv}}(b_1,\ldots,b_r)
  :=\sum_{j=1}^{r}\sigma_jc(b_j)
  \label{eq:allocation-error}
\end{equation}
to select the precision of each singular component.  For a hierarchical block, the available budget $B^{\mathrm q}$ is the amount left after the low-rank error and the allowance for GEMM rounding are subtracted from the block budget, as in~\eqref{eq:quant-budget}; the block index is again suppressed in this section.

Let the available bitwidths be
\begin{equation}
  \Bset=\{\mathfrak b_1,\ldots,\mathfrak b_L\},
  \qquad
  2\le\mathfrak b_1<\mathfrak b_2<\cdots<\mathfrak b_L.
  \label{eq:bitset}
\end{equation}
Set $c_\ell:=c(\mathfrak b_\ell)$.  The following monotonicity observation determines the smallest possible value of $E_{\mathrm{sv}}$.

\begin{lemma}
\label{lem:c-monotone}
The coefficient $c(b)$ in~\eqref{eq:cb-def} strictly decreases with the bit width $b\ge2$. Hence $c_1>c_2>\cdots>c_L>0$.
\end{lemma}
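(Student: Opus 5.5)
The plan is to reduce the claim to the monotonicity of $q_b=2^{b-1}-1$, which is positive and strictly increasing in $b$. Composing it with the function $g(x)=1/x+1/(4x^2)$ then gives the result, since $g$ is strictly decreasing on $x>0$. Because $c(b)=g(q_b)$ by~\eqref{eq:cb-def}, strict monotonicity of $c$ follows.

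First, for $b\ge2$ we have $q_b\ge 2^{1}-1=1>0$. Also $q_{b+1}-q_b=2^{b-1}>0$, so $q_b$ is strictly increasing on the integers $b\ge2$; if real $b$ is allowed, the same holds because $2^{b-1}$ is strictly increasing. Second, each of the two terms $1/x$ and $1/(4x^2)$ is strictly decreasing on $(0,\infty)$. Alternatively, for $0<x<y$ one can write directly
\begin{equation*}
  g(x)-g(y)=\frac{y-x}{xy}+\frac{y^2-x^2}{4x^2y^2}>0 .
\end{equation*}
Hence $b<b'$ implies $q_b<q_{b'}$, which implies $c(b)=g(q_b)>g(q_{b'})=c(b')$.

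Finally, apply this to the ordered bit set~\eqref{eq:bitset}. The ordering $\mathfrak b_1<\cdots<\mathfrak b_L$ gives $c_1>c_2>\cdots>c_L$. Positivity $c_L>0$ holds because $q_{\mathfrak b_L}>0$ makes both terms of $g$ positive.

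There is no real obstacle here. The only point requiring care is checking that $q_b>0$ for $b\ge2$, so that $g$ is evaluated on its domain of monotonicity; this fails at $b=1$, where $q_1=0$, which is why the lemma assumes $b\ge2$.
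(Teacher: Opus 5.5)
Your proof is correct and follows essentially the same route as the paper: the strictly increasing $q_b=2^{b-1}-1$ composed with the strictly decreasing map $q\mapsto 1/q+1/(4q^2)$ on $q>0$. The only differences are minor: the paper shows the decrease via the derivative $-1/q^2-1/(2q^3)<0$ rather than your direct difference, and you state explicitly the facts $q_b\ge1>0$ for $b\ge2$ and $c_L>0$, which the paper leaves implicit.
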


\begin{proof}
$q_b=2^{b-1}-1$ is strictly increasing in $b$.  For $q>0$, the function $f(q)=1/q+1/4q^2$ has derivative
\begin{equation}
  f'(q)=-\frac{1}{q^2}-\frac{1}{2q^3}<0.
\end{equation}
Thus $c(b)=f(q_b)$ strictly decreases as $b$ increases.
\end{proof}

\paragraph{Bitwidth allocation}

Ignoring fixed scale metadata, a balanced rank-one component stored at $b_j$ bits requires $(m+n)b_j$ bits for its factor entries.  Therefore the allocation problem subject to the error bound is
\begin{equation}
\underset{b_j\in\Bset}{\min}\quad
M_{\mathrm{payload}}
=(m+n)\sum_{j=1}^{r}b_j,
\qquad
\text{s.t.}\quad
\sum_{j=1}^{r}\sigma_j c(b_j)\le B^{\mathrm q}.
\label{eq:allocation-problem}
\end{equation}

As such, the role of singular values in storage  is obvious. Let $\varepsilon-j$ be the positive error allowances, which is assigned to the invidual components, with $\sum_{j=1}^r\varepsilon_j\le B^{\mathrm q}$. Therefore, the smallest available bitwidth meeting the allowance for component $j$ is $$b_j^{\mathrm{req}} =\min\{b\in\Bset:\sigma_j c(b)\le\varepsilon_j\},$$ which is assumed to be nonempty. Since $c(b)\sim 2^{1-b}$ as $b$ increases, this gives the approximate relation $b_j^{\mathrm{req}}\approx \lceil 1+\log_2(\sigma_j/\varepsilon_j)\rceil$ before restriction to the available bitwidths. \eqref{eq:allocation-problem} chooses the bitwidths under a total error budget without prescribing the individual allowances $\varepsilon_j$.

\begin{corollary}
\label{cor:feasibility}
The constraint $E_{\mathrm{sv}}\le B^{\mathrm q}$ in problem~\eqref{eq:allocation-problem} is feasible if and only if
\begin{equation}
  c_L\sum_{j=1}^{r}\sigma_j\le B^{\mathrm q}.
  \label{eq:feasibility}
\end{equation}
\end{corollary}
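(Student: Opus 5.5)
The plan is to prove the two directions separately, both resting on Lemma~\ref{lem:c-monotone}. Since every admissible bitwidth satisfies $b_j\in\Bset$, we have $c(b_j)\ge c_L$ for every $j$, because $c_L=c(\mathfrak b_L)$ is the smallest coefficient among $c_1>\cdots>c_L$. Because the weights $\sigma_j\ge0$, this yields, for every assignment $(b_1,\ldots,b_r)\in\Bset^r$, the lower bound
\begin{equation*}
  E_{\mathrm{sv}}(b_1,\ldots,b_r)=\sum_{j=1}^r\sigma_jc(b_j)\ \ge\ c_L\sum_{j=1}^r\sigma_j .
\end{equation*}
Moreover, equality holds for the uniform assignment $b_j=\mathfrak b_L$ for all $j$. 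Hence the minimum of $E_{\mathrm{sv}}$ over the finite feasible set $\Bset^r$ equals $c_L\nuc{\Sigma}$.

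For the \emph{only if} direction, suppose some assignment satisfies $E_{\mathrm{sv}}\le B^{\mathrm q}$. Chaining the displayed lower bound with this constraint gives $c_L\sum_j\sigma_j\le B^{\mathrm q}$, which is~\eqref{eq:feasibility}.

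For the \emph{if} direction, suppose~\eqref{eq:feasibility} holds. Then the uniform top-precision assignment $b_j=\mathfrak b_L$ attains $E_{\mathrm{sv}}=c_L\sum_j\sigma_j\le B^{\mathrm q}$, so it is a feasible point of~\eqref{eq:allocation-problem}. A minimizer then exists, since the feasible set is a nonempty subset of the finite set $\Bset^r$.

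No step is a genuine obstacle. The only points to watch are the following. The inequality $c(b_j)\ge c_L$ must be multiplied by nonnegative weights, which the ordering $\sigma_j\ge0$ in~\eqref{eq:svd} guarantees. The degenerate case where all $\sigma_j=0$, or $r=0$, makes both sides trivially consistent, since the condition reduces to $B^{\mathrm q}\ge0$. Finally, the inequality $c_L>0$ from Lemma~\ref{lem:c-monotone} is not even needed, only the ordering of the coefficients.
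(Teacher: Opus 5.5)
Your proof is correct and follows the paper's approach: you show that $c_L=\min_\ell c_\ell$ together with $\sigma_j\ge0$ makes the uniform top-precision assignment $b_j=\mathfrak b_L$ minimize $E_{\mathrm{sv}}$, so the condition is both necessary and sufficient. Your version just spells out the two directions and the nonnegativity of the weights, which the paper's one-line argument leaves implicit.
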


\begin{proof}
The smallest available error coefficient is $c_L$, so assigning every component to $\mathfrak b_L$ gives the minimum value of $E_{\mathrm{sv}}$ among all admissible bit assignments.  Thus~\eqref{eq:feasibility} is both necessary and sufficient.
\end{proof}

Swapping the bitwidths of two components leaves their storage cost unchanged but can change their contribution to the error bound. This suggests assigning no fewer bits to a larger singular value.

\begin{theorem}  
\label{thm:exchange}
Assume $\sigma_1\ge\sigma_2\ge\cdots\ge\sigma_r$.  For the optimization problem~\eqref{eq:allocation-problem}, there exists a storage-optimal feasible assignment satisfying $b_1\ge b_2\ge\cdots\ge b_r$. Thus a larger singular value never needs fewer bits than a smaller singular value in at least one optimal solution.
\end{theorem}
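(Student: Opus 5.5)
The plan is a standard exchange argument. The feasible set of problem~\eqref{eq:allocation-problem} is finite, since each $b_j\in\Bset$ and $r$ is finite. If it is nonempty, which by Corollary~\ref{cor:feasibility} holds exactly when \eqref{eq:feasibility} holds, a storage-optimal feasible assignment exists. Among all optimal assignments I pick one, $b=(b_1,\ldots,b_r)$, that minimizes the number of inversions, i.e.\ the number of pairs $i<j$ with $b_i<b_j$. I then show this number is zero, so $b_1\ge\cdots\ge b_r$.

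Suppose instead that some pair $i<j$ has $b_i<b_j$. Define $b'$ by swapping $b_i$ and $b_j$ and leaving all other entries unchanged. The storage $(m+n)\sum_j b_j$ is invariant under permutation of the entries, so $b'$ has the same optimal cost. For feasibility, compute
\begin{equation*}
E_{\mathrm{sv}}(b')-E_{\mathrm{sv}}(b)
=\sigma_i c(b_j)+\sigma_j c(b_i)-\sigma_i c(b_i)-\sigma_j c(b_j)
=-(\sigma_i-\sigma_j)\bigl(c(b_i)-c(b_j)\bigr).
\end{equation*}
Here $\sigma_i\ge\sigma_j$ because $i<j$. Also $c(b_i)>c(b_j)$ by Lemma~\ref{lem:c-monotone}, since $b_i<b_j$. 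So the difference is $\le 0$, giving $E_{\mathrm{sv}}(b')\le E_{\mathrm{sv}}(b)\le B^{\mathrm q}$, and $b'$ is feasible and optimal.

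It remains to show the swap strictly reduces the inversion count, which gives the contradiction. The cleanest route is to choose the pair adjacent, $j=i+1$: if any inversion exists, some adjacent pair is inverted. Swapping two adjacent entries changes only the relative order of that one pair, so the inversion count drops by exactly one. This contradicts minimality, so the chosen optimal assignment is nonincreasing.

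The only delicate point is this counting step; using adjacent transpositions, as in bubble sort, avoids any casework over the entries lying between $i$ and $j$. Ties $\sigma_i=\sigma_j$ cause no trouble, since the error difference is then zero. Finally, if the problem is infeasible the statement is vacuous, and I will remark on this explicitly.
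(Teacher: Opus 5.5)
Your proposal is correct and uses the same exchange argument as the paper: swapping an inverted pair leaves the payload unchanged and changes $E_{\mathrm{sv}}$ by $(\sigma_i-\sigma_j)\bigl(c(b_j)-c(b_i)\bigr)\le 0$. Choosing an optimum with the fewest inversions and swapping an adjacent pair makes the paper's ``repeat this exchange'' termination step rigorous, and your finiteness remark supplies the existence of an optimum that the paper takes for granted. One small correction: without \eqref{eq:feasibility} the existence claim is false rather than vacuous, so the theorem should be read under that condition.
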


\begin{proof}
Consider an inversion: indices $i<j$ with $\sigma_i\ge\sigma_j$ but $b_i<b_j$.  Since $c(b)$ decreases with $b$, $c(b_i)>c(b_j)$.  Swapping the two bitwidths leaves storage unchanged.  The change in $E_{\mathrm{sv}}$ is
\begin{align}
  &\sigma_i c(b_j)+\sigma_j c(b_i)
  -\sigma_i c(b_i)-\sigma_j c(b_j) 
  =(\sigma_i-\sigma_j)\bigl(c(b_j)-c(b_i)\bigr)
  \le0.
\end{align}
Hence removing an inversion cannot increase the error bound.  Repeating this exchange produces a monotone assignment with the same storage and no larger error.  Applying this to any storage-optimal feasible assignment proves the claim.
\end{proof}

The exchange argument does not determine how many components receive each bitwidth.  It does show that one optimal assignment can be described by boundaries in the ordered singular value list.

\begin{corollary}
\label{cor:contiguous}
Under the assumptions of Theorem~\ref{thm:exchange}, an optimal assignment can be represented by contiguous index intervals
\begin{equation}
  G_1,G_2,\ldots,G_L,
\end{equation}
where $G_1$ contains a suffix of the smallest singular values assigned to the lowest precision $\mathfrak b_1$, the next nonempty group contains the next suffix, and the dominant singular values are assigned to the highest precisions. Empty groups are allowed.
\end{corollary}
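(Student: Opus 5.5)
The plan is to read Corollary~\ref{cor:contiguous} off Theorem~\ref{thm:exchange}; essentially no new work is needed. By Corollary~\ref{cor:feasibility} we may assume the problem~\eqref{eq:allocation-problem} is feasible. Its feasible set is finite, since it is a subset of $\Bset^r$, so a storage-optimal feasible assignment exists. Theorem~\ref{thm:exchange} then supplies an optimal assignment $b_1\ge b_2\ge\cdots\ge b_r$, with every $b_j\in\Bset$.

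From this assignment, define for each level $\ell=1,\ldots,L$ the set
\begin{equation*}
  G_\ell:=\{\,j\in\{1,\ldots,r\}: b_j=\mathfrak b_\ell\,\}.
\end{equation*}
These sets are disjoint, their union is $\{1,\ldots,r\}$, and some of them may be empty. The key step is to show that each nonempty $G_\ell$ is a contiguous interval of indices. Suppose $i<k$ both lie in $G_\ell$ and $i<j<k$. Monotonicity gives $\mathfrak b_\ell=b_i\ge b_j\ge b_k=\mathfrak b_\ell$, so $b_j=\mathfrak b_\ell$ and $j\in G_\ell$.

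It remains to check the ordering of the intervals. If $\ell<\ell'$ and $i\in G_\ell$, $j\in G_{\ell'}$, then $b_i=\mathfrak b_\ell<\mathfrak b_{\ell'}=b_j$. Monotonicity then forces $i>j$, so every index in $G_\ell$ exceeds every index in $G_{\ell'}$. Hence the intervals appear in the order $G_L,G_{L-1},\ldots,G_1$ as the index increases. Consequently $G_1$ is a suffix of $\{1,\ldots,r\}$, corresponding to the smallest singular values at precision $\mathfrak b_1$. After removing $G_1$, the next nonempty group is a suffix of what remains. The leading indices, which carry the dominant singular values, receive the highest precisions.

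I do not expect a genuine obstacle. The only points needing care are handling empty groups and ties among equal singular values. Ties are already absorbed by Theorem~\ref{thm:exchange}, since it only asserts that some optimal solution is monotone.
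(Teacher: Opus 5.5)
Your proof is correct and follows the paper's route: the paper gives no separate proof and presents the corollary as a direct consequence of the monotone optimal assignment in Theorem~\ref{thm:exchange}, and your level sets $G_\ell$ with the contiguity and ordering checks make that deduction explicit. Your remark that an optimum exists because the feasible set is a finite subset of $\Bset^r$ adds a small point of care that the paper leaves implicit.
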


\paragraph{Greedy bit allocation}
\label{sec:greedy}
The greedy strategy assigns the lowest available bitwidth to as many of the smallest singular components as possible, while accounting for the error still required by the unassigned components. At every step, it reserves the \emph{minimum possible future error} of all still-unassigned dominant singular components.  This avoids the failure mode in which too much budget is spent on small singular values and the remaining dominant components cannot fit even at the highest available precision. Algorithm~\ref{alg:certified-greedy} implements this rule.

\begin{algorithm}[htbp]
\small
\caption{Greedy allocation with a bound on the remaining error}
\label{alg:certified-greedy}
\begin{algorithmic}[1]


\ENSURE Feasibility flag; groups $G_1,\ldots,G_L$; precision map $b_1,\ldots,b_r$; quantization error bound $E_{\mathrm{cert}}$; bits used by the factors $M_{\mathrm{payload}}$.

\STATE Compute $c_\ell=c(\mathfrak b_\ell)$ for $\ell=1,\ldots,L$.
\STATE Compute prefix sums $S_t=\sum_{j=1}^{t}\sigma_j$, with $S_0=0$.

\IF{$c_L S_r>B^{\mathrm q}$}
    \RETURN \textsc{infeasible}
\ENDIF

\STATE Set $R\gets B^{\mathrm q}$, $t\gets r$, and $G_\ell\gets\varnothing$ for all $\ell$.

\FOR{$\ell=1,\ldots,L-1$}
    \STATE $C\gets0$
    \WHILE{$t\ge1$ and $C+c_\ell\sigma_t+c_LS_{t-1}\le R$}
        \STATE Prepend $t$ to $G_\ell$.
        \STATE $b_t\gets\mathfrak b_\ell$; $C\gets C+c_\ell\sigma_t$; $t\gets t-1$.
    \ENDWHILE
    \STATE $R\gets R-C$.
\ENDFOR

\IF{$t\ge1$}
    \STATE $G_L\gets\{1,\ldots,t\}$.
    \FOR{$j=1,\ldots,t$}
        \STATE $b_j\gets\mathfrak b_L$.
    \ENDFOR
    \STATE $R\gets R-c_LS_t$.
\ENDIF

\STATE $E_{\mathrm{cert}}\gets B^{\mathrm q}-R$,  $M_{\mathrm{payload}}\gets (m+n)\sum_{j=1}^{r}b_j$.

\RETURN \textsc{feasible}, $\{G_\ell\}$, $\{b_j\}$, $E_{\mathrm{cert}}$, $M_{\mathrm{payload}}$.

\end{algorithmic}
\end{algorithm}

Now we define prefix sums
\begin{equation}
  S_t=\sum_{j=1}^{t}\sigma_j,
  \qquad S_0=0.
\end{equation}
At a stage where indices $1,\ldots,t$ remain unassigned, their minimum possible contribution to $E_{\mathrm{sv}}$ is $c_LS_t$.

\begin{lemma}
\label{lem:greedy-invariant}
At the beginning of each precision level in Algorithm~\ref{alg:certified-greedy}, if indices $1,\ldots,t$ remain unassigned and $R$ is the remaining budget, then
\begin{equation}
  c_LS_t\le R.
  \label{eq:reserve-invariant}
\end{equation}
\end{lemma}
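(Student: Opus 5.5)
The invariant~\eqref{eq:reserve-invariant} follows by induction over the precision levels $\ell=1,\ldots,L-1$ of Algorithm~\ref{alg:certified-greedy}. We also check that it still holds when control reaches the final assignment to $\mathfrak b_L$. The quantities to track are the pair $(t,R)$ at the start of each level, together with the running cost $C$ inside the while loop.

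\emph{Base case.} Before the first level, $t=r$ and $R=B^{\mathrm q}$. The algorithm returns \textsc{infeasible} unless $c_LS_r\le B^{\mathrm q}$. Hence, whenever the loop is entered, $c_LS_t\le R$. This is exactly the feasibility criterion of Corollary~\ref{cor:feasibility}.

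\emph{Inductive step.} Fix a level $\ell$ and let $(t_0,R_0)$ be the state at its start, with $c_LS_{t_0}\le R_0$ assumed. Inside the while loop I will prove the stronger loop invariant
\begin{equation*}
  C+c_LS_t\le R_0 .
\end{equation*}
It holds on entry, since then $C=0$ and $t=t_0$. Suppose an iteration is executed. Its guard is $C+c_\ell\sigma_t+c_LS_{t-1}\le R_0$, and the iteration replaces $C$ by $C+c_\ell\sigma_t$ and $t$ by $t-1$. The new value of $C+c_LS_t$ is therefore exactly the left-hand side of the guard, so the invariant is preserved.

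Note that this step does not need $c_\ell\ge c_L$ (Lemma~\ref{lem:c-monotone}). The guard itself already reserves $c_LS_{t-1}$ for the components that remain unassigned. When the loop terminates, with either $t=0$ or the guard failing, the invariant still holds. After the update $R\gets R_0-C$ it reads $c_LS_t\le R$, which is the claim at the start of level $\ell+1$.

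\emph{Final level.} The same inequality holds after level $L-1$, that is, at the start of the $\mathfrak b_L$ assignment. The step $R\gets R-c_LS_t$ then leaves $R\ge0$, which will be used later for $E_{\mathrm{cert}}\le B^{\mathrm q}$.

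The only delicate point is to state the invariant in terms of $R_0$ and the running $C$ rather than the current $R$. The algorithm updates $R$ only once per level, so the inner loop has to be analysed with this augmented quantity. The edge case $t=0$ is trivial because $S_0=0$.
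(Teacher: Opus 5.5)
Your proof is correct and follows essentially the paper's argument. The base case comes from the feasibility test. The while-loop guard becomes exactly $C+c_LS_t\le R$ after the update, with $R$ unchanged during the level, which you make explicit as $R_0$. The update $R\gets R-C$ then re-establishes $c_LS_t\le R$ at the next level. By starting your loop invariant at the entry state $C=0$, $t=t_0$, you also cover the case the paper handles separately, where no index is accepted.
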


\begin{proof}
The invariant holds initially by the feasibility test.  During a precision level, suppose the current group contributes $C$ to the error bound.  Every accepted index satisfies
\begin{equation}
  C+c_\ell\sigma_t+c_LS_{t-1}\le R.
\end{equation}
After accepting it and updating $C$ and $t$, this becomes $C+c_LS_t\le R$.

When the group is closed, the algorithm sets $R\leftarrow R-C$, yielding $c_LS_t\le R$.  If no index is accepted, neither $R$ nor $t$ changes.  Thus the invariant is preserved.
\end{proof}

Lemma~\ref{lem:greedy-invariant} shows that each completed group leaves enough budget to assign the remaining components at the highest available bitwidth. Theorem~\ref{thm:greedy-feasible} then establishes that the resulting monotone allocation satisfies the quantization budget and the corresponding max-norm error bound.
\begin{theorem}
\label{thm:greedy-feasible}
If~\eqref{eq:feasibility} holds, Algorithm~\ref{alg:certified-greedy} terminates with a monotone bitwidth assignment satisfying
\begin{equation}
  \sum_{j=1}^{r}\sigma_jc(b_j)
  =E_{\mathrm{cert}}
  \le B^{\mathrm q}.
  \label{eq:greedy-cert}
\end{equation}
Consequently, under the column-wise balanced quantization model, we have
\begin{equation}
  \normmax{T-\widehat T}
  \le E_{\mathrm{cert}}
  \le B^{\mathrm q}.
\end{equation}
\end{theorem}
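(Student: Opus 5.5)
The argument has four parts: termination, monotonicity, the identity $E_{\mathrm{cert}}=\sum_j\sigma_jc(b_j)$ together with $E_{\mathrm{cert}}\le B^{\mathrm q}$, and finally the max-norm bound via Theorem~\ref{thm:balanced-mixed}.

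\textbf{Termination.} Under~\eqref{eq:feasibility} the infeasibility test is skipped. The outer loop runs $L-1$ times. Each inner while loop strictly decreases $t$ and stops at $t=0$ at the latest. Hence the algorithm terminates.

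\textbf{Every index is assigned exactly once.} Indices are consumed from $t=r$ downward. Level $\ell$ takes a contiguous block of indices, and the final step assigns $\{1,\ldots,t\}$ to $\mathfrak b_L$. So the groups $G_1,\ldots,G_L$ partition $\{1,\ldots,r\}$ into contiguous intervals. Indices with larger $j$ (smaller $\sigma_j$) fall in groups with smaller $\ell$, and $\mathfrak b_1<\cdots<\mathfrak b_L$. This gives $b_1\ge b_2\ge\cdots\ge b_r$, as in Corollary~\ref{cor:contiguous}.

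\textbf{The error identity.} I will track the quantity $R+\sum_{j\text{ assigned}}\sigma_jc(b_j)$ and show it stays equal to $B^{\mathrm q}$.
\begin{itemize}
\item Initially $R=B^{\mathrm q}$ and nothing is assigned.
\item During level $\ell$, the variable $C$ equals $c_\ell\sum_{j\in G_\ell}\sigma_j$. The update $R\gets R-C$ therefore subtracts exactly the contribution of $G_\ell$.
\item The final step subtracts $c_LS_t$, which is exactly the contribution of $G_L$.
\end{itemize}
At the end all indices are assigned, so $B^{\mathrm q}-R=\sum_j\sigma_jc(b_j)$, which is $E_{\mathrm{cert}}$ by definition in the algorithm.

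\textbf{The budget bound $E_{\mathrm{cert}}\le B^{\mathrm q}$.} This is the main step, and it is equivalent to $R\ge0$ at termination. Lemma~\ref{lem:greedy-invariant} gives $c_LS_t\le R$ at the start of each precision level. Closing level $\ell$ re-establishes this, so it also holds after level $L-1$ closes. The final step then sets $R\gets R-c_LS_t\ge0$. If $t=0$ at that point, $R\ge c_LS_0=0$ directly.

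One subtlety needs checking: if the loop exits early with $t=0$ in some level, later levels accept nothing. The invariant still holds in that case, because neither $R$ nor $t$ changes.

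\textbf{Max-norm bound.} Theorem~\ref{thm:balanced-mixed}, inequality~\eqref{eq:balanced-safe}, applied with this bit map under column-wise balanced quantization, gives
\[
\normmax{T-\widehat T}\le\sum_j\sigma_jc(b_j)=E_{\mathrm{cert}}\le B^{\mathrm q}.
\]
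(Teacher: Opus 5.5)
Your proposal is correct and follows the paper's proof. You invoke Lemma~\ref{lem:greedy-invariant} to get $c_LS_t\le R$ when the highest precision is reached, so $R\ge0$ after the final subtraction and $E_{\mathrm{cert}}=B^{\mathrm q}-R\le B^{\mathrm q}$; you read off monotonicity from the smallest-to-largest processing order; and you finish with Theorem~\ref{thm:balanced-mixed}. Your termination argument and the bookkeeping invariant $R+\sum_{j\text{ assigned}}\sigma_jc(b_j)=B^{\mathrm q}$ make explicit two things the paper leaves implicit: the identity $\sum_j\sigma_jc(b_j)=E_{\mathrm{cert}}$ and the $t=0$ edge case.
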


\begin{proof}
By Lemma~\ref{lem:greedy-invariant}, when the algorithm reaches the highest precision, the remaining prefix satisfies $c_LS_t\le R$ and therefore fits in the remaining budget.  All committed group errors are nonnegative, so the final remaining budget is nonnegative.  Hence $E_{\mathrm{cert}}=B^{\mathrm q}-R\le B^{\mathrm q}$.  The assignment is monotone because the algorithm processes singular values from the smallest upward while processing bitwidths from the lowest upward.  The max-norm statement follows from Theorem~\ref{thm:balanced-mixed}.
\end{proof}

\begin{remark}
Algorithm~\ref{alg:certified-greedy} greedily makes each low-precision tail group as large as possible while leaving enough budget for the remaining prefix.  It guarantees feasibility under~\eqref{eq:feasibility}, but does not generally minimize the stored factor bits over the available bitwidths.
\end{remark}

\paragraph{\texorpdfstring{Mixed-bit $\mathcal{H}$-matrix construction}{Hierarchical construction}}
\label{sec:h-algorithm}

The local allocator becomes a hierarchical algorithm by applying it independently to every non-diagonal block, including non-diagonal inadmissible blocks. Here, no allocation is performed before the singular components are known. First, the rank components receive bitwidths, and only then do components with a common bitwidth form a group.  The groups do not share a quantization scale; each column retains its own left and right scales. The admissibility test determines where subdivision stops; both admissible and non-diagonal inadmissible blocks use factorized storage.  If the permitted rank, available bitwidths, and arithmetic choices cannot meet a block-wise error bound, construction reports infeasibility rather than storing that block in dense form.

\begin{algorithm}[htbp]
\small
\caption{Mixed-bit $\mathcal{H}$-matrix construction with diagonal-only dense storage}
\label{alg:h-construction}
\begin{algorithmic}[1]

\REQUIRE Matrix access or block data; hierarchical block partition $\mathcal L=\LR\dot\cup\Dense$; block budgets $B_k^{\mathrm{tot}}$; bit set $\Bset$; GEMM accumulator precision $u_{\mathrm a}$ and reserves $B_k^{\mathrm{gemm}}$.

\FORALL{$k\in\Dense$}
    \STATE Store the diagonal block $A_k$ exactly.
\ENDFOR

\FORALL{non-diagonal $k\in\LR$ (admissible or inadmissible)}
    \STATE Construct or recompress $T_k=U_k\Sigma_kV_k^T$ and obtain $\eta_k\ge\normmax{A_k-T_k}$.

    \STATE Set $B_k^{\mathrm q}\gets B_k^{\mathrm{tot}}-\eta_k-B_k^{\mathrm{gemm}}$.

    \IF{$B_k^{\mathrm q}<0$}
        \STATE Refine the low-rank approximation or increase GEMM accuracy; recompute the budget, or return infeasibility if permitted refinements are exhausted. Continue only after the budget is nonnegative.
    \ENDIF

    \STATE Apply Algorithm~\ref{alg:certified-greedy} to $\{\sigma_{k,j}\}_{j=1}^{r_k}$ with budget $B_k^{\mathrm q}$.

    \IF{the highest-precision feasibility test fails}
        \STATE Increase the admissible precision set or improve the low-rank approximation and recompute the budget; repeat the block allocation, or return infeasibility if permitted refinements are exhausted.
    \ENDIF

    \STATE Form $p_{k,j}=\sqrt{\sigma_{k,j}}u_{k,j}$ and $q_{k,j}=\sqrt{\sigma_{k,j}}v_{k,j}$; quantize each column with its own left and right scale at bitwidth $b_{k,j}$.

\STATE Form groups $G_{k,\ell}=\{j:b_{k,j}=\mathfrak b_\ell\}$ and compute $R_k^{\mathrm{cert}}$ from Theorem~\ref{thm:local-gemm}.

    \IF{$R_k^{\mathrm{cert}}>B_k^{\mathrm{gemm}}$}
        \STATE Increase accumulator precision, use a sharper group summation, or enlarge the rounding allowance and rerun the allocation; return infeasibility if permitted refinements are exhausted.
    \ENDIF

    \STATE Set $E_k\gets \eta_k+Q_k^{\mathrm{cert}}+R_k^{\mathrm{cert}}$.
\ENDFOR

\STATE $E_{\mathcal H}\gets\max_{k\in\LR}E_k$ (zero if $\LR=\varnothing$).

\RETURN $\widetilde A$ and $E_{\mathcal H}$.

\end{algorithmic}
\end{algorithm}

\begin{theorem}
\label{thm:h-algorithm}
If every repeated block step in Algorithm~\ref{alg:h-construction} terminates with $Q_k^{\mathrm{cert}}\le B_k^{\mathrm q}, R_k^{\mathrm{cert}}\le B_k^{\mathrm{gemm}}$, then
\begin{equation}
  \normmax{A-\widetilde A}
  \le E_{\mathcal H}
  \le\max_{k\in\LR}B_k^{\mathrm{tot}}.
\end{equation}
\end{theorem}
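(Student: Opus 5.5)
The plan is to reduce the statement to Theorem~\ref{thm:h-global} by checking, block by block, that the quantities computed in Algorithm~\ref{alg:h-construction} are valid upper bounds for the three error sources of Lemma~\ref{lem:trunc-quant}. I will treat the blocks in $\Dense$ and in $\LR$ separately, then combine them through the disjoint partition~\eqref{eq:leaf-partition}.

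First, I fix a non-diagonal block $k\in\LR$ and consider its final pass through the loop, after all refinements have terminated. At that point the algorithm has a low-rank approximation $T_k$ together with a bound $\eta_k\ge\normmax{A_k-T_k}$. It has also produced bitwidths $b_{k,j}$ from Algorithm~\ref{alg:certified-greedy}, the column-wise quantized balanced factors, and the groups $G_{k,\ell}$. By Theorem~\ref{thm:balanced-mixed} (equivalently Theorem~\ref{thm:greedy-feasible}), the exact product $\widehat T_k$ satisfies
\[
Q_k=\normmax{T_k-\widehat T_k}\le Q_k^{\mathrm{cert}}.
\]
By Theorem~\ref{thm:local-gemm}, in its singular-value-only form, the computed $\widetilde T_k$ satisfies
\[
R_k=\normmax{\widehat T_k-\widetilde T_k}\le R_k^{\mathrm{cert}}.
\]
To get the second inequality I will subtract the quantization part of~\eqref{eq:local-gemm-total}, which amounts to applying the triangle inequality to~\eqref{eq:gemm-channel-raw} and to the summation estimate in that proof. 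Lemma~\ref{lem:trunc-quant} then gives
\[
\normmax{A_k-\widetilde T_k}\le \eta_k+Q_k^{\mathrm{cert}}+R_k^{\mathrm{cert}}=E_k.
\]

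Second, for $k\in\Dense$ the stored block is exact, so its error is zero. Since the blocks are disjoint, $\normmax{A-\widetilde A}=\max_{k\in\mathcal L}\normmax{A_k-\widetilde A_k}$, exactly as in Theorem~\ref{thm:h-global}. Combining this with the first step yields $\normmax{A-\widetilde A}\le\max_{k\in\LR}E_k=E_{\mathcal H}$, and the convention that an empty $\LR$ gives zero covers the degenerate case.

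Third, I use the termination hypotheses $Q_k^{\mathrm{cert}}\le B_k^{\mathrm q}$ and $R_k^{\mathrm{cert}}\le B_k^{\mathrm{gemm}}$, together with $B_k^{\mathrm q}=B_k^{\mathrm{tot}}-\eta_k-B_k^{\mathrm{gemm}}$ recomputed at the final pass, to get
\[
E_k\le \eta_k+B_k^{\mathrm{tot}}-\eta_k-B_k^{\mathrm{gemm}}+B_k^{\mathrm{gemm}}=B_k^{\mathrm{tot}}.
\]
Taking the maximum over $k\in\LR$ then gives the second inequality of the theorem.

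The main obstacle is bookkeeping rather than analysis. One must make sure that $\eta_k$, $B_k^{\mathrm q}$ and the groups used in $R_k^{\mathrm{cert}}$ all refer to the same final iteration of the refinement loops. A refinement changes $T_k$ or $B_k^{\mathrm{gemm}}$, and a stale budget would break the cancellation in the third step. I will therefore state explicitly that every quantity is the one from the last, successful pass, as the hypothesis ``terminates with'' guarantees.
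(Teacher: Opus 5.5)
Your proposal is correct and is essentially the paper's proof. The paper cites Remark~\ref{rem:quant-budget} for $E_k\le B_k^{\mathrm{tot}}$ and Theorem~\ref{thm:h-global} for the rest; you instead unpack Theorem~\ref{thm:h-global} inline via Lemma~\ref{lem:trunc-quant}, $Q_k\le Q_k^{\mathrm{cert}}$, $R_k\le R_k^{\mathrm{cert}}$, and disjointness of the partition. One phrase is loose: you cannot literally ``subtract the quantization part'' of~\eqref{eq:local-gemm-total}, but the triangle-inequality route you go on to describe, combining~\eqref{eq:gemm-channel-raw} with the sequential-summation estimate, does yield $R_k\le R_k^{\mathrm{cert}}$ directly.
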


\begin{proof}
For each factorized block, Remark~\ref{rem:quant-budget} gives $E_k\le B_k^{\mathrm{tot}}$.  The result then follows directly from Theorem~\ref{thm:h-global}.
\end{proof}

\paragraph{Extension to ACA-generated blocks}
\label{subsec:aca}

The preceding theory does not require the initial low-rank approximation to be constructed by a large dense SVD.  Now suppose adaptive cross approximation (ACA) produces
\begin{equation}
  A_k\approx X_kY_k^T,
  \qquad
  X_k\in\R^{m_k\times r},
  \quad
  Y_k\in\R^{n_k\times r}.
\end{equation}
A small-factor recompression supplies the orthogonal SVD coordinates needed by the singular value aware allocation.  Compute thin QR factorizations $X_k=Q_XR_X$ and $Y_k=Q_YR_Y$, and the SVD of the $R_XR_Y^T \in \mathbb{R}^{r\times r}$, which gives
\begin{equation}
  R_XR_Y^T=\widetilde U\Sigma_k\widetilde V^T \Rightarrow X_kY_k^T
  =(Q_X\widetilde U)\Sigma_k(Q_Y\widetilde V)^T,
\end{equation}
so $U_k=Q_X\widetilde U$ and $V_k=Q_Y\widetilde V$ can be passed directly to the balanced quantization and allocation stages.  Only the small core requires an SVD.

The reconstruction analysis requires a valid low-rank residual bound.  If ACA provides $\eta_k^{\mathrm{ACA}}\ge \normmax{A_k-X_kY_k^T}$ and recompression introduces $\eta_k^{\mathrm{rec}}\ge \normmax{X_kY_k^T-T_k}$, then
\begin{equation}
  \normmax{A_k-T_k}
  \le \eta_k^{\mathrm{ACA}}+\eta_k^{\mathrm{rec}}.
  \label{eq:aca-total}
\end{equation}
Thus one may take $\eta_k:=\eta_k^{\mathrm{ACA}}+\eta_k^{\mathrm{rec}}$ in~\eqref{eq:quant-budget}.  An ACA stopping estimator that is not a proven upper bound gives an empirical rather than a fully certified global result; the quantization and GEMM parts of the analysis remain valid conditionally on the reported $\eta_k$.

\section{Storage accounting}
\label{sec:assumptions-discussion}

The objective in~\eqref{eq:allocation-problem} counts only bits used by the quantized factor entries, omitting scales and bitwidth labels.  Over the complete hierarchical partition, this contribution is
\begin{equation}
  M_{\mathcal H,\mathrm{lr}}
  =\sum_{k\in\LR}(m_k+n_k)
   \sum_{j=1}^{r_k}b_{k,j}.
  \label{eq:h-lr-payload}
\end{equation}
Let $b_{\mathrm d}$ be the number of bits per dense entry. Diagonal blocks add $M_{\mathcal H,\mathrm d} =b_{\mathrm d}\sum_{k\in\Dense}m_kn_k$ bits.  The sum in~\eqref{eq:h-lr-payload} includes non-diagonal inadmissible blocks; a failed non-diagonal candidate makes the strict construction infeasible rather than reverting to dense storage. Each retained component requires two reconstruction scales, so scales stored at $b_s$ bits per scale contribute $M_{\mathrm{scale}}=2b_s\sum_{k\in\LR}r_k$. Including bitwidth labels gives the reported storage model
\begin{equation}
  M_{\mathcal H}
  =M_{\mathcal H,\mathrm{lr}}+M_{\mathcal H,\mathrm d}
   +M_{\mathrm{scale}}+M_{\mathrm{map}},
  \label{eq:total-storage-with-metadata}
\end{equation}
where $M_{\mathrm{map}}$ counts the selected widths.  Hierarchical storage totals include scales and direct precision labels but exclude tree topology, headers, and workspace;~\eqref{eq:allocation-problem} excludes all metadata.  Encoding details are given in supplementary Section~\ref{sec:supp-setup}.

The width $b$ denotes the signed integer quantizer in~\eqref{eq:quantizer}.  Assumption~\ref{ass:quantizer} requires exact or sufficiently accurate scales.  Lower-precision scales introduce errors not included in $E_{\mathrm{geom}}$ or $E_{\mathrm{cert}}$ and require a separate allowance. Theorems~\ref{thm:balanced-mixed} and~\ref{thm:greedy-feasible} use separate left and right scales for each balanced column.  If the columns of a bitwidth group share one scale within each factor, the error bound depends on the group maxima $a_G$ and $d_G$ as in~\eqref{eq:shared-scale-bound}.  Finally, zero singular value components may be discarded before allocation, so $r$ denotes the retained positive rank.

\section{Numerical experiments}\label{sec:exps}
We compare measured errors with the theoretical bounds and evaluate the discrete allocation quality and the storage benefit of quantization in complete hierarchical representations.  The principal storage and bound comparisons use FP64 emulation. Separate C++/CUDA tests examine numerical behavior of the native implementation. On a single NVIDIA A100 GPU, all 1,501 native reconstruction tests with strict arithmetic error bounds satisfied their stated allowances. The mixed-bit path decodes factors to FP32 and uses floating-point GEMMs (see supplementary Section~\ref{sec:supp-native}).

\subsection{Test problems and methodology}\label{sec:exp-setup}

The local tests use $n\times n$ matrices $T=U\Sigma V^T$, with $n\in\{256,512\}$ and rank $r=64$.  We construct the orthonormal columns of $U$ and $V$ by QR factorization of independent Gaussian matrices and prescribe the singular values $\sigma_j=0.82^{j-1}$, $\sigma_j=j^{-1.4}$, or $\sigma_j=1$.  These three cases are referred to below as the Exponential, Algebraic, and Flat spectra.  We use $\Bset=\{4,8,12,16\}$ and six relative tolerances from $0.002$ to $0.1$ (listed in supplementary Section~\ref{sec:supp-setup}), with local quantization budget $B^{\mathrm q}=\varepsilon\normmax{T}$.

The hierarchical suite contains 44 matrices: Euclidean point kernels, BEM matrices, and six supplementary one-dimensional kernels. For the point kernels, $A_{ij}=1/\|x_i-x_j\|_2$ or $\log\|x_i-x_j\|_2$ for $i\ne j$, with $N\in\{512,1024,2048\}$ uniform or clustered points in two or three dimensions. We set $A_{ii}=0$ without regularizing off-diagonal distances; no positive definiteness or three dimensional Laplace interpretation of $\log r$ is assumed.

The BEM matrices discretize single-layer operators by constant panels and midpoint collocation:
\begin{equation*}
 A^{(2)}_{ij}=-\frac{1}{2\pi}\int_{\Gamma_j}\log\|x_i-y\|_2\,ds_y,
 \qquad
 A^{(3)}_{ij}=\frac{1}{4\pi}\int_{\Gamma_j}\frac{dS_y}{\|x_i-y\|_2}.
\end{equation*}
Circle and ellipse boundaries have up to 2048 panels; cube and cuboid boundaries have up to 1536.  Analytic panel integrals include singular self terms, without forced symmetrization.  Supplementary Section~\ref{sec:supp-setup} specifies the geometries and remaining kernels.

We build the common row/column cluster tree by a stable median split along the coordinate of largest spread, stopping when a cluster contains at most 32 indices. For BEM matrices, cluster boxes enclose full panel supports. We use a geometric admissibility test based on cluster bounding boxes~\cite{hackbusch2015hierarchical,bebendorf2008hierarchical}. For boxes $B_\tau$ and $B_\sigma$, a block is admissible when $\operatorname{dist}(B_\tau,B_\sigma)\ge\eta_{\mathrm{adm}}\max\{\operatorname{diam}(B_\tau),\operatorname{diam}(B_\sigma)\}$. Here, distance is the Euclidean separation of the boxes, diameter is the length of a box diagonal, and we set $\eta_{\mathrm{adm}}=1$. Otherwise the block-tree recursion continues until both clusters are terminal. Every non-diagonal block, including inadmissible ones, is factorized; only diagonal blocks remain dense in FP64. We use ten total relative tolerances from $10^{-6}$ to $10^{-1}$ (see supplementary Section~\ref{sec:supp-setup} for details). Unless stated otherwise $B_k^{\mathrm{tot}}=\varepsilon\normmax{A_k}$. For each non-diagonal block, rank selection starts from a full dense SVD and requires its tail to satisfy both a relative Frobenius target of $0.1$ and an absolute allowance of $0.25B_k^{\mathrm{tot}}$. The measured max-norm residual $\eta_k$ is then subtracted from the budget, with 2\% reserved for arithmetic. Ranks may reach the smaller block dimension; low rank is an observed property, not an assumption.

The available integer bitwidths are $\{4,8,12,16,24,32\}$, with FP64 scales, group products, and accumulation.  Failed candidates are reported as infeasible, without dense non-diagonal substitution.  Independent arithmetic tests use both FP32 and FP64. Emulation forms integer codes and independently scaled decoded factors. The reported storage is calculated from the assigned integer bitwidths, FP64 scales, and bitwidth labels. 

For the implemented arithmetic bound, factor-representation rounding is included in addition to Theorem~\ref{thm:local-gemm}.  With $W_k=\sum_j\sigma_{k,j}\rho(b_{k,j})^2$, one rounding per factor entry gives
\begin{equation}
 R_k^{\mathrm{impl}}=(2u_{\mathrm a}+u_{\mathrm a}^2)W_k
                 +(1+u_{\mathrm a})^2R_k^{\mathrm{cert}}.
 \label{eq:exp-representation}
\end{equation}
The native decoder uses $\gamma_2(u_{\mathrm a})$ to account for scale multiplication and conversion (supplementary Section~\ref{sec:supp-arithmetic}).

\subsection{Local error bounds and allocation quality}
\label{sec:exp-local}

We compare Algorithm~\ref{alg:certified-greedy} with uniform bitwidths, the low to high greedy baseline without a future-error reserve, and a dynamic-programming (DP) optimum for the same discrete payload objective under the singular value error bound.  Across the 36 matrix tolerance pairs, the proposed allocator and DP have the same 24 feasible cases as uniform 16-bit allocation.  The remaining 12 cases fail the highest-bit feasibility test: even assigning 16 bits to every retained component cannot satisfy $E_{\mathrm{sv}}\le B^{\mathrm q}$.  The baseline without a reserve returns no feasible assignment in this sweep, including the 24 cases for which a feasible assignment exists.  Uniform 4- and 8-bit assignments are also infeasible throughout, whereas uniform 12-bit succeeds in four cases.  Thus the future-error reserve ensures feasibility under the prescribed error bound. The feasibility threshold remains determined by the highest available bitwidth.

For example, with eight unit singular values, $\Bset=\{4,8,16\}$, and $B^{\mathrm q}=6c(4)+c(16)/2$, the baseline assigns six components 4 bits, leaving only $c(16)/2$ for two components that require at least $2c(16)$.  The proposed rule assigns five components 4 bits and three 8 bits and remains feasible; supplementary Figure~\ref{fig:supp-reserve-boundary} shows the corresponding slack trace.

For all 76 feasible local results, including the baselines, the measured reconstruction errors remain below the corresponding singular value-based bounds.  For the 24 proposed allocations, the largest ratio of measured error to $\sum_j\sigma_jc(b_j)$ is $0.0162$ (rounded upward).  Evaluating the geometry-aware bound on the \emph{same} assignments reduces the bound by factors between 25.8 and 54.2, without a violation. The difference between the two bounds comes from retaining $\alpha_j\beta_j$ in the geometry-aware bound; the assignments still use the ordered weights $\sigma_j$. Figure~\ref{fig:exp-local-bounds} displays the normalized errors and both bounds for $n=256$.  The gap between the two bounds identifies a source of potential improvement in allocation, while the remaining gap to measured error cautions against treating either bound as an error prediction.

\begin{figure}[tbp]
\centering
\includegraphics[width=0.81\linewidth]{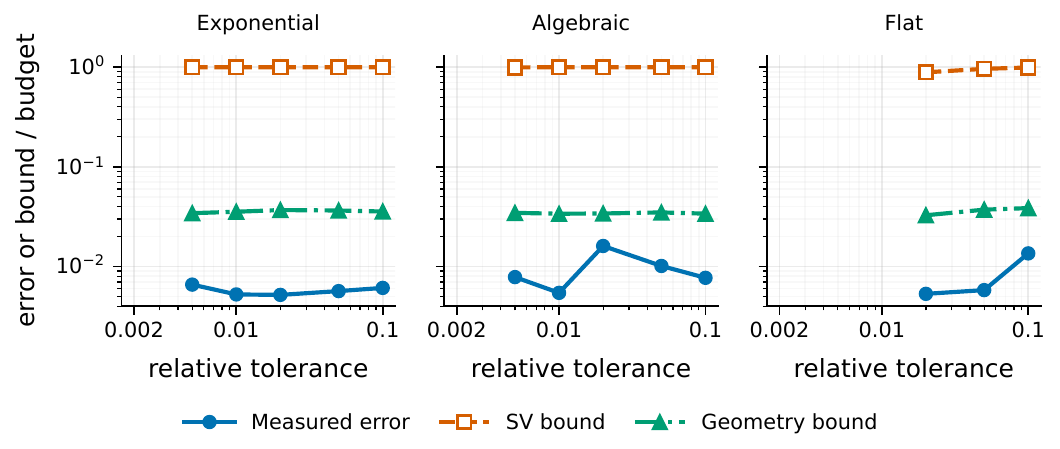}
\caption{Local reconstruction errors and error bounds divided by $B^{\mathrm q}=\varepsilon\|T\|_{\max}$, for $n=256$ and rank 64. Both bounds are evaluated on the same greedy assignments, which use singular value weights.  Missing points are referred to as those do not satisfy $E_{\mathrm{sv}}\le B^{\mathrm q}$.}
\label{fig:exp-local-bounds}
\end{figure}

Adaptivity is most useful for decaying spectra.  At $\varepsilon=0.1$, the exponential cases require only 52.3--54.3\% of the uniform 16-bit payload, whereas the flat cases require 94.9--96.1\%.  Figure~\ref{fig:exp-allocator-storage} shows the corresponding tolerance dependence, normalized by the bits used by uniform 16-bit factors rather than dense matrix storage, thereby removing the common low-rank compression factor.  Dominant components retain high precision while a low-precision tail absorbs the remaining error budget.  Figure~\ref{fig:paper-precision-map} shows the assignments: for the exponential case at $n=256$, $\varepsilon=0.1$, 20, 2, 6, and 36 components receive 16, 12, 8, and 4 bits, respectively.  Maps need not be nested across tolerances; monotonicity in singular value index within an assignment is a different property.  Supplementary Section~\ref{sec:supp-local} gives the full local validation and the $n=512$ storage curves.

\begin{figure}[tbp]
\centering
\includegraphics[width=0.81\linewidth]{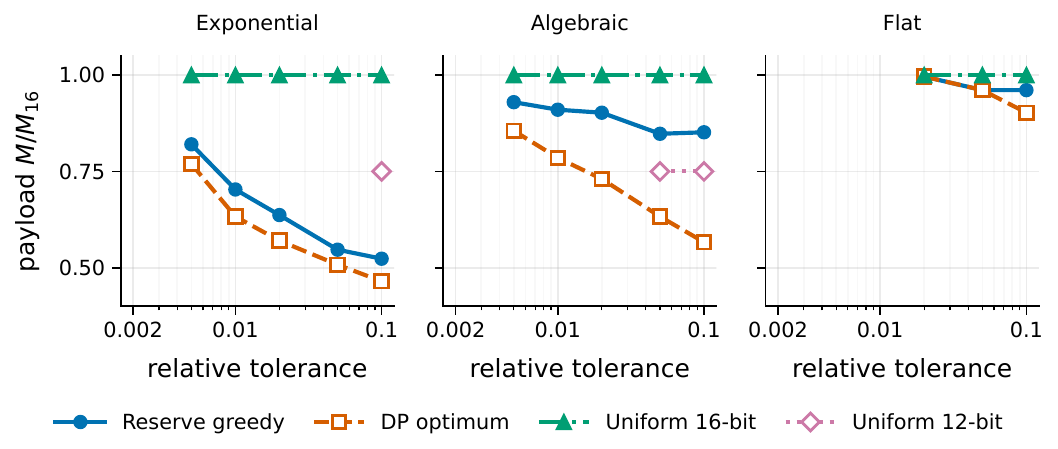}
\caption{Local integer payload relative to uniform 16-bit factors for $n=256$ and $r=64$.  Only assignments satisfying $E_{\mathrm{sv}}\le B^{\mathrm q}$ are shown; infeasible points are omitted.  Uniform 4- and 8-bit allocations and the baseline without a reserve have no feasible points.  Scale and precision-map metadata are excluded.  The algebraic spectrum shows that greedy need not dominate a feasible uniform precision.}
\label{fig:exp-allocator-storage}
\end{figure}

\begin{figure}[tbp]
\centering
\includegraphics[width=0.85\linewidth]{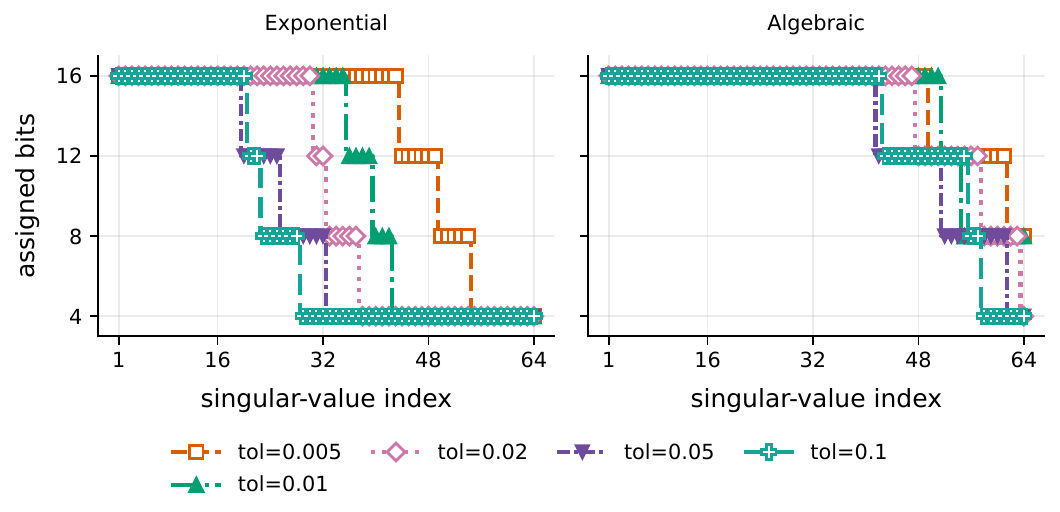}
\caption{Representative greedy bitwidth assignments for $n=256$, $r=64$. Indices follow decreasing singular value. The exponential and algebraic cases show different transitions; the latter also exhibits the payload behavior in supplementary Table~\ref{tab:supp-nonmonotone}.}
\label{fig:paper-precision-map}
\end{figure}

We next compare the factor bits used by greedy with those of the DP-optimal assignment.  Let $M_{\mathrm G}$ and $M_*$ be the bits used by the factors under the proposed greedy and DP-optimal assignments, respectively. Over the 24 matched pairs, $M_{\mathrm G}/M_*$ has median 1.091 and maximum 1.503.  The maximum occurs for the algebraic spectrum with $n=256$ at $\varepsilon=0.1$. Theorem~\ref{thm:greedy-feasible} guarantees feasibility; the ratios above show how the greedy payload compares with the DP optimum under the same error bound.

The uniform baseline also matters.  At $\varepsilon=0.1$, uniform 12-bit allocation is feasible for both algebraic-spectrum cases, but greedy uses 13.54\% and 14.58\% more payload for $n=256$ and $512$, respectively. In these cases, uniform 12-bit factors use less storage than the greedy assignment, despite its savings relative to uniform 16-bit factors. For the algebraic cases, greedy payload also increases when the tolerance changes from $0.05$ to $0.1$, and the  payloads is given  in supplementary Table~\ref{tab:supp-nonmonotone}.  The feasible sets expand with the budget, but the recomputed greedy choices need not improve the objective.  This is a property of the heuristic, not of the exact discrete optimum, whose payload cannot increase when its budget is relaxed.

\subsection{\texorpdfstring{Sensitivity to available bitwidths}{Sensitivity to available bitwidths}}
\label{sec:exp-menu}

A separate FP64 emulation study varies the set of available bitwidths while keeping each matrix and its budget fixed.  It uses $200\times180$ matrices of rank 25, ten random seeds, five spectra, and nine tolerances, giving 450 matrix tolerance pairs.  The spectra comprise three exponential decays, an algebraic decay, and a flat spectrum; supplementary Section~\ref{sec:supp-menu} specifies the reproducible configuration. The nested sets of bitwidths are
\begin{align*}
 B_0&=\{4,8,12,16\}, & B_1&=B_0\cup\{6,10\},\\
 B_2&=B_1\cup\{24,32\}, & B_3&=B_2\cup\{2\}.
\end{align*}
Comparisons use bits for the integer factor entries, excluding metadata, with the same constraint $E_{\mathrm{sv}}\le B^{\mathrm q}$ for greedy, DP, and the best feasible uniform precision.

However, increasing the available bitwidths does not necessarily improve the heuristic. From $B_0$ to $B_1$, the highest-bit feasibility threshold is unchanged: both sets admit 180 pairs.  Nevertheless, greedy payload increases in 153 of 180 matched pairs, by at most 24.65\% (rounded upward), whereas DP payload never increases.  Adding 24- and 32-bit choices admits all 450 pairs, including 270 newly feasible pairs.  On the original 180 pairs, however, greedy uses more payload in 177 cases, with a maximum increase of 66.87\%.  Adding the 2-bit choice changes no feasibility decision and can either increase or decrease greedy payload.

\begin{figure}[tbp]
\centering
\includegraphics[width=0.81\linewidth]{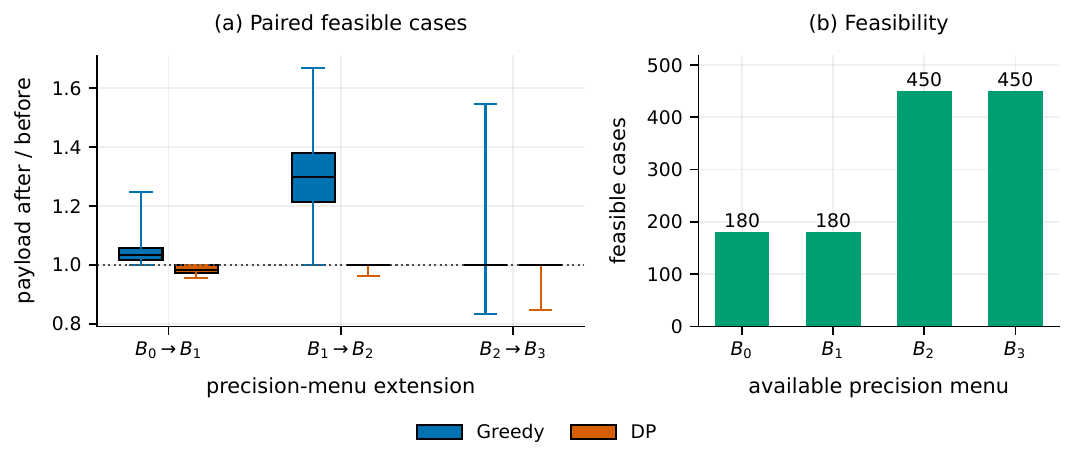}
\caption{Sensitivity to available bitwidths in the independent rank-25 study. Left: paired payload ratios over cases feasible before and after each extension (180, 180, and 450 pairs, respectively); boxes show quartiles and medians, and whiskers span the full range.  Newly feasible cases are excluded from these ratios.  Right: feasibility counts out of 450, identical for the proposed greedy method, DP, and the best feasible uniform precision. The dotted line on the left denotes unchanged payload.}
\label{fig:exp-menu}
\end{figure}

The distinction is between a larger feasible set and the decisions of a particular heuristic.  A lower highest-bit coefficient permits more aggressive early commitments to the low-precision tail; repairing the remaining prefix can then be expensive.  DP cannot deteriorate under these nested sets, since every previous assignment remains admissible. For the same reason, retaining a previously feasible assignment as an incumbent would guard against payload regression, without implying DP optimality.  Supplementary Sections~\ref{sec:supp-menu} and \ref{sec:supp-reserve} give spectrum-resolved statistics and a reserve trace, and distinguish the exact feasibility boundary from its floating-point realization.  No runtime conclusion is drawn from this study.

\subsection{Hierarchical reconstruction errors and storage reduction}
\label{sec:exp-offdiagonal}
\label{sec:exp-hierarchical}

All $44\times10\times3=1{,}320$ combinations of matrix, tolerance, and block budget are feasible when only diagonal blocks are stored densely, with no off-diagonal dense fallback.  None violates $e=\normmax{A-\widetilde A}\le E=\max_{k\in\LR}(\eta_k+Q_k^{\mathrm{cert}}+R_k^{\mathrm{impl}})$ with validation allowance $10^{-12}$.  All measured errors are nonzero; for block-relative budgets, $E/e$ ranges from 4.39 to 36.75. Thus the error bound is conservative but remains informative over five orders of magnitude in prescribed tolerance.  The quantization term is at least 86.0\% of the dominant block's error bound.  

Figure~\ref{fig:exp-global} shows the circle BEM matrix at $N=2048$. The measured error stays below the error bound throughout the sweep. Here, as in the other BEM tests, the reference is the assembled collocation matrix.

\begin{figure}[tbp]
\centering
\includegraphics[width=\linewidth]{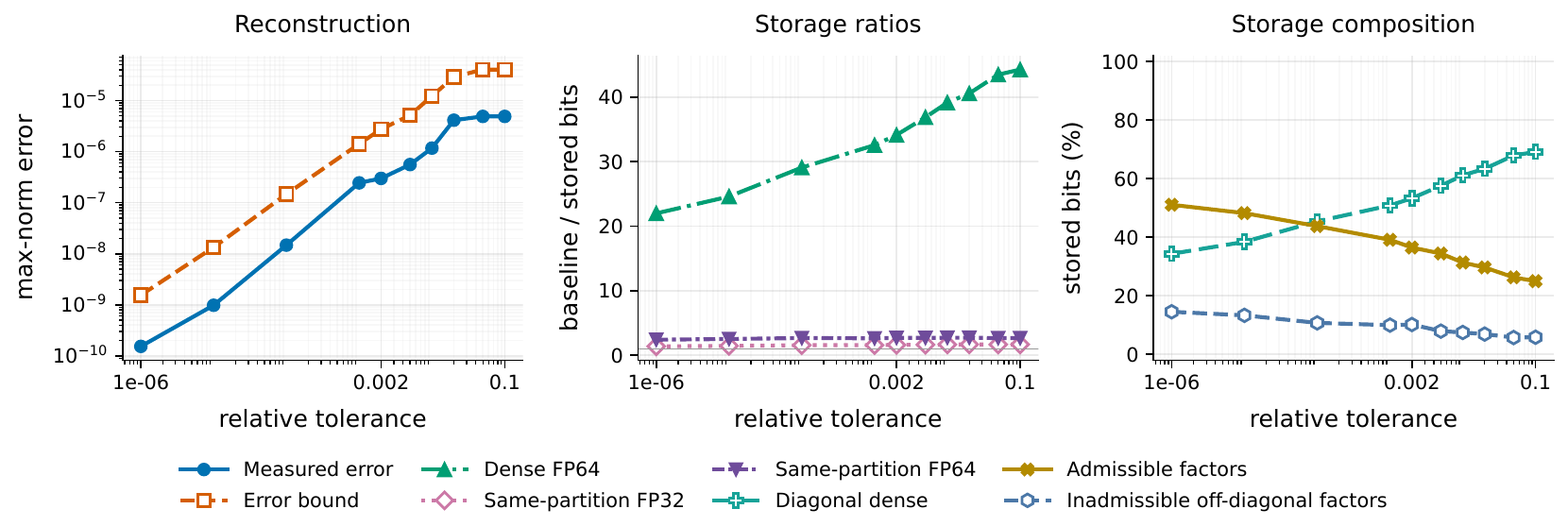}
\caption{Circle BEM, $N=2048$, with block-relative budgets and only diagonal blocks stored densely.  Left: absolute max-norm error and error bound. Middle: storage ratios against dense FP64 and same-partition floating factors.  Right: shares of total stored bits, including scales and maps, for the three block classes.  The horizontal tolerance is relative to each block norm.  All points represent feasible quantized constructions.}
\label{fig:exp-global}
\end{figure}

Let $M_{\mathrm q}$ include bits for the quantized factors, FP64 diagonal blocks, column scales, and precision maps, but exclude tree topology, allocation overhead, and execution workspace.  We distinguish total compression from the incremental storage benefit of quantization:
\begin{equation*}
 C_\varepsilon=\frac{64N^2}{M_{\mathrm q}},\qquad
 G_{32}=\frac{D+32S}{M_{\mathrm q}},\qquad
 G_{64}=\frac{D+64S}{M_{\mathrm q}},
\end{equation*}
where $D$ is the diagonal dense cost and $S=\sum_{k\in\LR}(m_k+n_k)r_k$. The floating-factor controls retain the identical partition and ranks, absorb singular values, and require no quantization scales or precision maps. The ratios therefore quantify storage differences for the fixed partition and ranks.

\begin{table}[tbp]
\centering
\caption{Diagonal-only dense storage at $\varepsilon=0.01$ with block-relative budgets.  Each geometry uses its largest tested matrix; point clouds are uniform.  $C_{0.01}$ compares against dense FP64; $G_{32},G_{64}$ compare against same-partition, same-rank floating factors with unchanged FP64 diagonal blocks.  The quantized denominator includes scales and maps.}
\label{tab:exp-hierarchical}
\footnotesize
\begin{tabular}{lrrrrr}
\toprule
Matrix & $N$ & $C_{0.01}$ & $G_{32}$ & $G_{64}$ & $E/e$ \\
\midrule
2D BEM, circle & 2048 & 39.141 & 1.666 & 2.721 & 10.48 \\
2D BEM, ellipse & 2048 & 37.107 & 1.680 & 2.781 & 6.74 \\
3D BEM, cube & 1536 & 12.524 & 2.542 & 4.889 & 9.29 \\
3D BEM, cuboid & 1536 & 12.908 & 2.454 & 4.706 & 13.18 \\
2D uniform, $1/r$ & 2048 & 19.133 & 2.437 & 4.575 & 9.92 \\
2D uniform, $\log r$ & 2048 & 19.873 & 2.379 & 4.448 & 8.08 \\
3D uniform, $1/r$ & 2048 & 9.828 & 2.757 & 5.361 & 15.07 \\
3D uniform, $\log r$ & 2048 & 8.890 & 2.714 & 5.288 & 11.97 \\
\bottomrule
\end{tabular}
\end{table}

Table~\ref{tab:exp-hierarchical} gives $C_{0.01}=8.890$--$39.141$, whereas the same-partition gains are $G_{32}=1.666$--$2.757$ and $G_{64}=2.721$--$5.361$.  Circle BEM has the largest dense-baseline compression but the smallest same-partition gain among these cases. Conversely, the three dimensional $1/r$ point kernel has substantially less total compression but the largest incremental quantization gain. Geometric compressibility and precision reduction are therefore distinct effects; the dense-baseline ratio alone does not measure the contribution of adaptive bit allocation.

The storage decomposition identifies different remaining limitations. At $\varepsilon=0.01$, diagonal blocks occupy 61.16\% of the circle BEM representation, whereas non-diagonal inadmissible factors occupy 64.70\% and 62.56\% for the three dimensional $1/r$ and $\log r$ kernels. Consequently, reducing factor bitwidths alone has limited leverage in the former, while the latter retain a substantial near-field factor cost. Scale/map metadata is 3.69--17.92\% of storage across the table and cannot be neglected when many short factors are quantized.

The three dimensional $\log r$ kernel remains less compressible than circle BEM.  At $\varepsilon=10^{-6}$, the three dimensional BEM and point-kernel cases contain full-rank non-diagonal inadmissible blocks; at $\varepsilon=0.01$, their median rank fractions $r_k/\min(m_k,n_k)$ are 0.250--0.281.  Factorization therefore does not imply a per-block storage advantage.  Rank and precision both adapt with tolerance, so the storage changes cannot be attributed to bitwidths alone. Supplementary Figure~\ref{fig:supp-hmatrix-storage} and Table~\ref{tab:supp-nearfield-ranks} give the corresponding curves and rank diagnostics.

We also compare three quantizations of the same balanced factors.  Per-tensor quantization uses one scale for each whole factor and one bitwidth per block.  Fixed-bit per-channel quantization uses a scale for each factor column but a common width within a block. The proposed method uses a separate scale and adaptive width for each column.  Each fixed-bit control chooses the lowest feasible width separately for each non-diagonal block.  All three methods share the partition, retained ranks, and block-relative budgets, and their total storage includes the common FP64 diagonal blocks, scales, and precision maps.

Across 60 matched matrix tolerance pairs, the adaptive representation uses less storage than fixed-bit per-channel quantization in every pair (median fixed-bit/adaptive ratio 1.252), but less than per-tensor quantization in only 45 pairs.  For the three $N=2048$ cases in Figure~\ref{fig:exp-granularity}, the per-channel/adaptive ratios narrow from 1.42--1.68 at $\varepsilon=10^{-6}$ to 1.10--1.37 at $\varepsilon=0.1$; the per-tensor/adaptive ratios narrow from 1.13--1.66 to 0.93--0.97.  Thus the endpoint storage differences diminish as tolerance is relaxed, although discrete bit and rank decisions make individual steps nonmonotone. Each control satisfies its own sufficient error bound while the measured reconstruction errors may differ. Complete comparison is given in supplementary Section~\ref{sec:supp-granularity}.

\begin{figure}[tbp]
\centering
\includegraphics[width=0.81\linewidth]{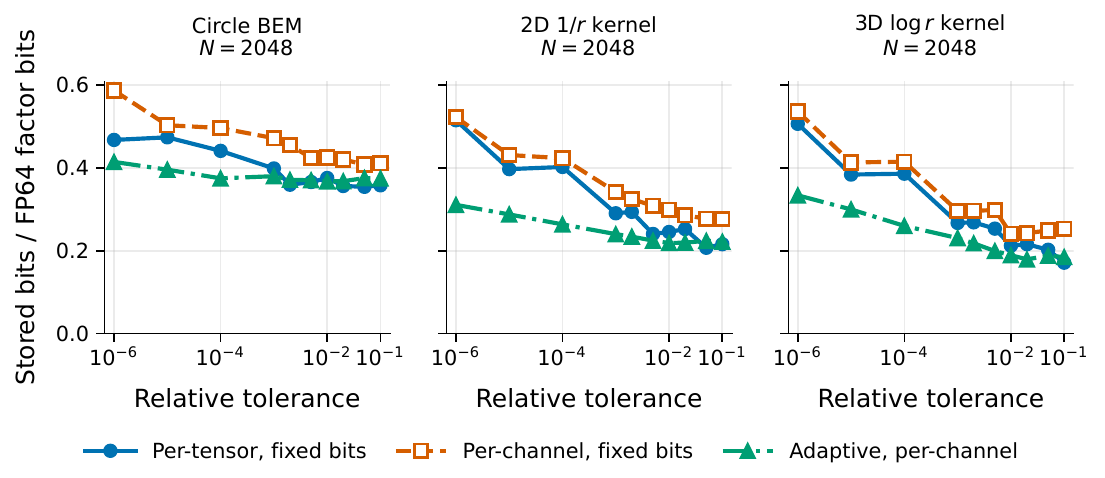}
\caption{Total stored bits relative to the same-partition FP64 factor representation for circle BEM, uniform two-dimensional $1/r$, and uniform three dimensional $\log r$ matrices at $N=2048$.  All methods use identical ranks at each block-relative tolerance.  The numerator includes FP64 diagonal blocks, factor scales, and precision maps.  Fixed-bit controls choose the lowest feasible width per block; their realized errors need not match the adaptive method's.}
\label{fig:exp-granularity}
\end{figure}

For all 132 combinations of matrix and block budget, total compression is non-decreasing over the tested tolerances; no general monotonicity of the greedy allocator follows. Comparisons of block budgets for the same matrix also require care: supplementary Table~\ref{tab:supp-policies} shows that hybrid budgets can reduce storage without changing the global error or its bound, whereas global-relative budgets permit larger errors.  This is consistent with the maximum-over-blocks error bound, not an equal-accuracy ranking of the budget choices. The studies that retain inadmissible blocks densely and those that store only diagonal blocks densely also differ in rank selection, available bitwidths, and arithmetic.  Therefore, their comparison does not isolate the effect of near-field factorization.

We also evaluated $Y=AX$ using the representation that stores only diagonal blocks densely. Factorized blocks are applied without forming their dense products. In all 3,600 runs, the observed arithmetic errors were below their respective bounds, as were the application errors bounded by propagating the measured block residuals (supplementary Section~\ref{sec:supp-application}).

\section{Conclusion}\label{sec:conclude}

We developed an adaptive method for quantizing hierarchical matrices based on an allocation strategy that leverages information on singular values. It proceeds by allocating bitwidths subject to a reconstruction error bound for the non-diagonal blocks of hierarchical matrices, with only diagonal blocks retained densely and unquantized.  With balanced column-wise quantization the strategy yields an additive max-norm error bound, and reserving the minimum future error makes the greedy allocation feasible whenever the highest available precision is feasible.  Combining truncation, quantization, and arithmetic bounds over a disjoint partition of matrix entries establishes a global reconstruction error bound. Numerical experiments show the storage reductions obtained by quantization and quantify the effectiveness  of the error bounds.  Future work includes error analysis for complete hierarchical operations and hardware-aware execution. 

\bibliographystyle{siamplain}
\bibliography{references}

@article{hackbusch2000sparse,
  title={A sparse $\mathcal{H}$-matrix arithmetic. Part II: Application to multi-dimensional problems},
  author={Hackbusch, Wolfgang and Khoromskij, Boris N},
  journal={Computing},
  volume={64},
  number={1},
  pages={21--47},
  year={2000},
  publisher={Springer}
}

@article{weak_hackbusch2004,
  title={Hierarchical matrices based on a weak admissibility criterion},
  author={Hackbusch, Wolfgang and Khoromskij, Boris N and Kriemann, Ronald},
  journal={Computing},
  volume={73},
  number={3},
  pages={207--243},
  year={2004},
  publisher={Springer}
}

@article{borm2003hierarchical,
  title={Hierarchical matrices},
  author={B{\"o}rm, Steffen and Grasedyck, Lars and Hackbusch, Wolfgang},
  journal={Lecture notes},
  volume={21},
  pages={2003},
  year={2003},
  publisher={Citeseer}
}

@article{borm2009construction,
  title={Construction of data-sparse $\mathcal{H}^2$-matrices by hierarchical compression},
  author={B{\"o}rm, Steffen},
  journal={SIAM Journal on Scientific Computing},
  volume={31},
  number={3},
  pages={1820--1839},
  year={2009},
  publisher={SIAM}
}

@article{ambikasaran2019hodlrlib,
  title={{HODLRlib}: {A} library for hierarchical matrices},
  author={Ambikasaran, Sivaram and Singh, Karan Raj and Sankaran, Shyam Sundar},
  journal={Journal of Open Source Software},
  volume={4},
  number={34},
  pages={1167},
  year={2019}
}

@article{gillman2012direct,
  title={A direct solver with $\mathcal{O} ({N})$ complexity for integral equations on one-dimensional domains},
  author={Gillman, Adrianna and Young, Patrick M and Martinsson, Per-Gunnar},
  journal={Frontiers of Mathematics in China},
  volume={7},
  number={2},
  pages={217--247},
  year={2012},
  publisher={Springer}
}

@article{chandrasekaran2007fast,
  title={A fast solver for {HSS} representations via sparse matrices},
  author={Chandrasekaran, Shiv and Dewilde, Patrick and Gu, Ming and Lyons, William and Pals, Timothy},
  journal={SIAM Journal on Matrix Analysis and Applications},
  volume={29},
  number={1},
  pages={67--81},
  year={2007},
  publisher={SIAM}
}

@inproceedings{10.5555/3666122.3666573,
author = {Wortsman, Mitchell and Dettmers, Tim and Zettlemoyer, Luke and Morcos, Ari and Farhadi, Ali and Schmidt, Ludwig},
title = {Stable and low-precision training for large-scale vision-language models},
year = {2023},
booktitle = {Advances in Neural Information Processing Systems},
volume = {36},
pages = {10271--10298},
doi = {10.52202/075280-0451}
}

@INPROCEEDINGS{10028742,
  author={Kulkarni, Uday and Hosamani, Abhishek S and Masur, Abhishek S and Hegde, Shashank and Vernekar, Ganesh R and Siri Chandana, K},
  booktitle={2022 International Conference on Automation, Computing and Renewable Systems (ICACRS)}, 
  title={A Survey on Quantization Methods for Optimization of Deep Neural Networks}, 
  year={2022},
  pages={827--834},
  publisher={IEEE},
  doi={10.1109/ICACRS55517.2022.10028742}
}

@article{10.1145/3623402,
author = {Rokh, Babak and Azarpeyvand, Ali and Khanteymoori, Alireza},
title = {A Comprehensive Survey on Model Quantization for Deep Neural Networks in Image Classification},
year = {2023},
publisher = {ACM},
volume = {14},
number = {6},
articleno = {97},
pages = {97:1--97:50},
journal = {ACM Transactions on Intelligent Systems and Technology},
numpages = {50},
doi = {10.1145/3623402}
}

@inproceedings{10.1145/3123939.3123982,
author = {Albericio, Jorge and Delm\'{a}s, Alberto and Judd, Patrick and Sharify, Sayeh and O'Leary, Gerard and Genov, Roman and Moshovos, Andreas},
title = {Bit-pragmatic deep neural network computing},
year = {2017},
publisher = {ACM},
booktitle = {Proceedings of the 50th Annual IEEE/ACM International Symposium on Microarchitecture},
pages = {382--394},
numpages = {13},
series = {MICRO-50 '17},
doi = {10.1145/3123939.3123982}
}

@INPROCEEDINGS{7551407,
  author={Chen, Yu-Hsin and Emer, Joel and Sze, Vivienne},
  booktitle={2016 ACM/IEEE 43rd Annual International Symposium on Computer Architecture (ISCA)}, 
  title={Eyeriss: A Spatial Architecture for Energy-Efficient Dataflow for Convolutional Neural Networks}, 
  year={2016},
  pages={367--379},
  publisher={IEEE},
  doi={10.1109/ISCA.2016.40}
}

@article{10.1093/imanum/drac037,
    author = {Amestoy, Patrick and Boiteau, Olivier and Buttari, Alfredo and Gerest, Matthieu and Jézéquel, Fabienne and L’Excellent, Jean-Yves and Mary, Theo},
    title = {Mixed precision low-rank approximations and their application to block low-rank LU factorization},
    journal = {IMA Journal of Numerical Analysis},
    volume = {43},
    number = {4},
    pages = {2198--2227},
    year = {2023},
    month = {07},
    doi = {10.1093/imanum/drac037}
}

@book{hackbusch2015hierarchical,
  author = {Hackbusch, Wolfgang},
  title = {Hierarchical Matrices: Algorithms and Analysis},
  series = {Springer Series in Computational Mathematics},
  volume = {49},
  publisher = {Springer},
  address = {Berlin, Heidelberg},
  year = {2015},
  doi = {10.1007/978-3-662-47324-5}
}

@article{hackbusch1999sparse,
  author = {Hackbusch, Wolfgang},
  title = {A Sparse Matrix Arithmetic Based on {H}-Matrices. Part {I}: Introduction to {H}-Matrices},
  journal = {Computing},
  volume = {62},
  number = {2},
  pages = {89--108},
  year = {1999},
  doi = {10.1007/s006070050015}
}

@book{bebendorf2008hierarchical,
  author = {Bebendorf, Mario},
  title = {Hierarchical Matrices: A Means to Efficiently Solve Elliptic Boundary Value Problems},
  series = {Lecture Notes in Computational Science and Engineering},
  volume = {63},
  publisher = {Springer},
  address = {Berlin, Heidelberg},
  year = {2008},
  doi = {10.1007/978-3-540-77147-0}
}

@INPROCEEDINGS{8310262,
  author={Lee, Jinmook and Kim, Changhyeon and Kang, Sanghoon and Shin, Dongjoo and Kim, Sangyeob and Yoo, Hoi-Jun},
  booktitle={2018 IEEE International Solid-State Circuits Conference (ISSCC)}, 
  title={{UNPU}: A 50.6{TOPS/W} unified deep neural network accelerator with 1b-to-16b fully-variable weight bit-precision}, 
  year={2018},
  publisher={IEEE},
  pages={218--220},
  doi={10.1109/ISSCC.2018.8310262}
}

@inproceedings{ooi2020effect,
  author    = {Rise Ooi and Takeshi Iwashita and Takeshi Fukaya and Akihiro Ida and Rio Yokota},
  title     = {Effect of Mixed Precision Computing on {H}-Matrix Vector Multiplication in {BEM} Analysis},
  booktitle = {Proceedings of the International Conference on High Performance Computing in Asia-Pacific Region},
  pages     = {92--101},
  year      = {2020},
  publisher = {ACM},
  doi       = {10.1145/3368474.3368479}
}

@article{carson2025mixedhodlr,
  author = {Carson, Erin and Chen, Xinye and Liu, Xiaobo},
  title = {Mixed Precision {HODLR} Matrices},
  journal = {SIAM Journal on Scientific Computing},
  volume = {47},
  number = {3},
  pages = {A1408--A1435},
  year = {2025},
  doi = {10.1137/24M1683925}
}

@misc{khan2026hybrid,
  author        = {Ritesh Khan and Erin Carson},
  title         = {Hybrid Hierarchical Matrices with Adaptive Mixed Precision Storage},
  year          = {2026},
  eprint        = {2604.09147},
  archivePrefix = {arXiv},
  primaryClass  = {math.NA}
}

@article{kriemann2025hierarchical,
  author  = {Ronald Kriemann},
  title   = {Hierarchical Low-Rank Arithmetic with Floating Point Compression},
  journal = {SIAM Journal on Scientific Computing},
  volume  = {47},
  number  = {4},
  pages   = {B763--B784},
  year    = {2025},
  doi     = {10.1137/24M1649009}
}

@inproceedings{li2023ivit,
  title     = {{I-ViT}: Integer-Only Quantization for Efficient Vision Transformer Inference},
  author    = {Li, Zhikai and Gu, Qingyi},
  booktitle = {Proceedings of the IEEE/CVF International Conference on Computer Vision (ICCV)},
  pages     = {17065--17075},
  month     = {October},
  year      = {2023},
  doi       = {10.1109/ICCV51070.2023.01565}
}

@inproceedings{kim2021ibert,
  title     = {{I-BERT}: Integer-Only BERT Quantization},
  author    = {Kim, Sehoon and Gholami, Amir and Yao, Zhewei and Mahoney, Michael W. and Keutzer, Kurt},
  booktitle = {Proceedings of the 38th International Conference on Machine Learning},
  pages     = {5506--5518},
  year      = {2021},
  volume    = {139},
  series    = {Proceedings of Machine Learning Research},
  publisher = {PMLR}
}

@incollection{gholami2021survey,
  title     = {A Survey of Quantization Methods for Efficient Neural Network Inference},
  author    = {Gholami, Amir and Kim, Sehoon and Dong, Zhen and Yao, Zhewei and Mahoney, Michael W. and Keutzer, Kurt},
  booktitle = {Low-Power Computer Vision},
  pages     = {291--326},
  publisher = {Chapman and Hall/CRC},
  year      = {2022},
  doi       = {10.1201/9781003162810-13}
}

\clearpage
\appendix
\renewcommand{\appendixname}{Supplementary Material}
The supplementary materials provide reproducibility details and extended diagnostics for Section~\ref{sec:exps} of the main paper. The main paper presents the local and global error bounds, allocation comparisons, the effects of geometry on storage, and storage comparisons on the same block partitions. Further results for the fixed-bit quantization controls and native GPU numerical checks are collected here. The reconstruction and application checks use separate validation suites and distinct references.

\section{Test definitions and reproducibility}\label{sec:supp-setup}

The studies of local allocation, available bitwidths, and hierarchical storage and error use Python and NumPy on the CPU. Reconstruction uses FP64 arithmetic unless an FP32 arithmetic control is specified. Python 3.12.11 and NumPy 2.5.3 were used for the numerical validation and allocation studies. The native reconstruction and matrix application checks use C++17, the CUDA 13.2 runtime, and cuBLAS on a single NVIDIA A100 GPU. Code, configurations, and reproduction instructions are available at \url{https://github.com/inEXASCALE/qhmat}.

Random seeds are fixed by the experiment configurations; the study across ten seeds specifies them in Section~\ref{sec:supp-menu}. The single-block suite has $n\in\{256,512\}$, rank 64, and exponential, algebraic, and flat spectra as specified in the main paper.  The single-block comparison uses six tolerances $0.002,0.005,0.01,0.02,0.05,0.1$ and bitwidths $\{4,8,12,16\}$.  The hierarchical study in which only diagonal blocks remain dense uses ten tolerances $10^{-6},10^{-5},10^{-4},0.001,0.002,0.005,0.01,0.02,0.05,0.1$ and the available bitwidths $\{4,8,12,16,24,32\}$.

The hierarchical storage totals charge two FP64 scales per retained component and a direct precision map recording each assigned bitwidth, at a cost of $r_k\lceil\log_2|\Bset|\rceil$ bits for each factorized block $k$. By Theorem~\ref{thm:exchange}, a monotone assignment could instead be encoded by the bitwidth labels of its $K$ nonempty groups and the $K-1$ boundaries between them.
Tree topology, serialization headers, and runtime workspace are excluded. The bounds treat scales as exact or sufficiently accurate; more aggressive scale quantization would require additional error terms.

The 44 hierarchical matrices comprise six one-dimensional kernels, 24 point-kernel matrices, eight planar BEM matrices, and six surface BEM matrices.  For the one-dimensional cases, $x_i=(i-1)/(N-1)$, $N\in\{512,1024\}$, $r_{ij}=|x_i-x_j|$, and $h=1/(N-1)$.  The matrices are
\begin{align*}
 A^{\mathrm{inv}}_{ij}&=\frac{1}{r_{ij}+2h},&
 A^{\mathrm G}_{ij}&=\exp[-(r_{ij}/0.18)^2],\\
 A^{\mathrm M}_{ij}&=(1+z_{ij})e^{-z_{ij}},&
 z_{ij}&=\sqrt{3}r_{ij}/0.16.
\end{align*}
The last is the Mat\'ern-$3/2$ kernel.  The code identifier \texttt{laplace1d} denotes the regularized inverse-distance matrix above.
These kernels include their finite diagonal entries.

The point-kernel matrices use unregularized off-diagonal $1/r$ or $\log r$, with omitted self interactions $A_{ii}=0$.  Uniform clouds lie in $[-1/2,1/2]^d$; clustered clouds have centers $\pm0.3\boldsymbol{1}$ and coordinate standard deviation $0.07$.  The combinations of $d\in\{2,3\}$, uniform or clustered distributions, two kernels, and $N\in\{512,1024,2048\}$ give 24 cases.

The planar BEM boundaries are polygonal approximations of a circle of radius $0.5$ and an ellipse with semiaxes $(0.75,0.25)$, each with $N\in\{256,512,1024,2048\}$ panels. The surface boundaries are a unit cube and a cuboid with side lengths $(1.5,1,0.5)$. Subdividing each face into $s^2$ rectangles for $s\in\{4,8,16\}$ gives $N\in\{96,384,1536\}$. The constant-panel, midpoint-collocation matrices use the analytic single-layer integrals in the main paper, including finite singular self terms. No symmetrization is applied.

The hierarchical experiment builds the common row/column cluster tree by a stable median split along the coordinate of largest spread, stopping when a cluster contains at most 32 indices.  For BEM matrices, cluster boxes include full panel supports.  The admissibility test uses $\eta_{\mathrm{adm}}=1$ as specified in Section~\ref{sec:exp-setup} of the main paper; blocks that fail the test are subdivided until both clusters are terminal.  Only diagonal blocks remain dense.  The rank rule and FP64 arithmetic follow that section, with a 25\% truncation allowance and 2\% arithmetic reserve.  Failed non-diagonal candidates are reported as infeasible.

The hierarchical results include storage totals for floating-point factors on the same partitions. All error-bound checks use FP64 numerical references with the stated allowances. Section~\ref{sec:supp-application} specifies the application checks.

\section{Local error bounds and allocation}\label{sec:supp-local}

Theorem~\ref{thm:balanced-mixed} gives both the singular-value bound and the sharper bound involving the column maxima $\alpha_j\beta_j$. Figure~\ref{fig:supp-local-bounds} evaluates both on the same feasible assignments from the proposed greedy algorithm, dividing each error or bound by that case's budget $B^{\mathrm q}=\varepsilon\|T\|_{\max}$. The geometry bound is 25.8--54.2 times smaller than the singular-value bound across the 24 allocations. Both bounds are evaluated on assignments chosen using singular-value weights.

\begin{figure}[tbp]
\centering
\includegraphics[width=0.82\linewidth]{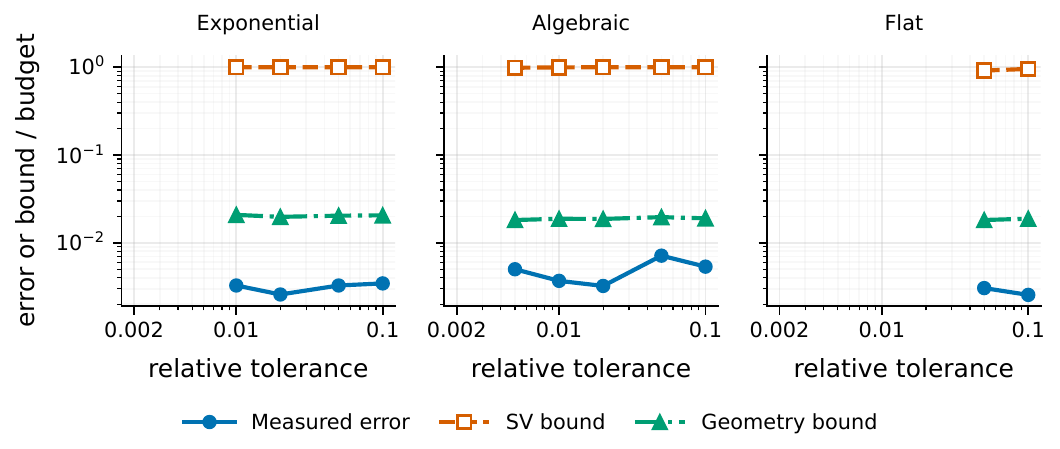}
\caption{Local errors and error bounds divided by the local budget, with $n=512$ and rank 64, supplementing the $n=256$ comparison in Figure~\ref{fig:exp-local-bounds} of the main paper. Both bounds use the same feasible greedy assignments. Omitted tolerances are infeasible under the available bitwidths.}
\label{fig:supp-local-bounds}
\end{figure}

Table~\ref{tab:exp-local} gives the casewise feasibility and DP comparisons. Greedy, DP, and the uniform 16-bit allocation share 24 feasible pairs; the other 12 fail even at the highest precision. The uniform 12-bit allocation succeeds in four pairs, whereas the uniform 4-bit and uniform 8-bit allocations and the low-to-high greedy baseline without a reserve have no feasible pairs. The latter rule fails on all 24 pairs admitting a feasible assignment.

\begin{table}[tbp]
\centering
\caption{Local allocation at rank 64.  Feasibility counts are out of six tolerances.  $M_{\mathrm G},M_{16},M_*$ denote greedy, uniform 16-bit, and DP-optimal integer payloads.  The fourth column uses $\varepsilon=0.1$; the last is the maximum over matched feasible tolerances.  Metadata is excluded from all three objectives.}
\label{tab:exp-local}
\small
\begin{tabular}{rlrrr}
\toprule
$n$ & Spectrum & Feasible & $M_{\mathrm G}/M_{16}$ & Max. $M_{\mathrm G}/M_*$ \\
\midrule
256 & Exponential & 5/6 & 0.523 & 1.126 \\
256 & Algebraic & 5/6 & 0.852 & 1.503 \\
256 & Flat & 3/6 & 0.961 & 1.065 \\
512 & Exponential & 4/6 & 0.543 & 1.112 \\
512 & Algebraic & 5/6 & 0.859 & 1.358 \\
512 & Flat & 2/6 & 0.949 & 1.000 \\
\bottomrule
\end{tabular}
\end{table}

Figure~\ref{fig:supp-allocator} completes the main paper's $n=256$ storage plot with $n=512$.  Decaying spectra allow substantial savings relative to the uniform 16-bit allocation, whereas flat spectra offer little component separation. The feasible uniform 12-bit algebraic case is cheaper than greedy at $\varepsilon=0.1$, even though greedy remains feasible.

\begin{figure}[tbp]
\centering
\includegraphics[width=0.82\linewidth]{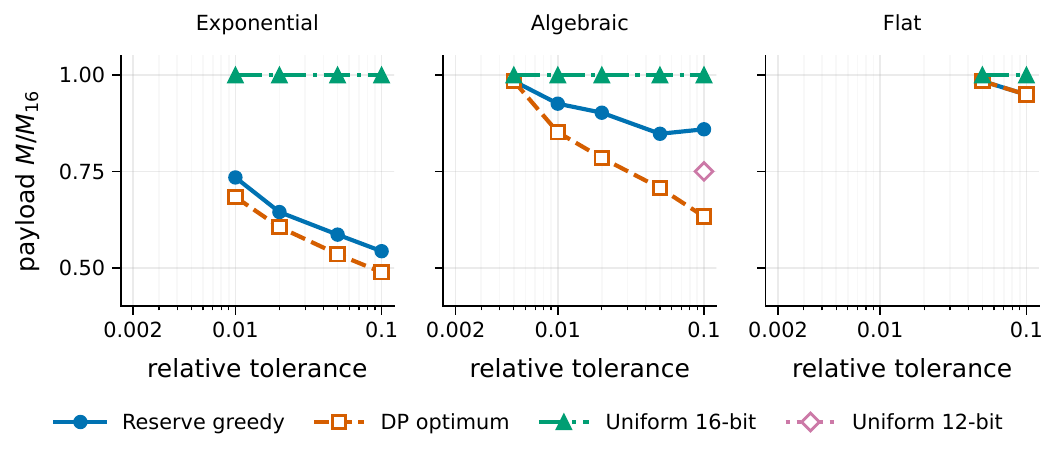}
\caption{Local payload comparison for $n=512$, $r=64$, normalized by uniform 16-bit factor payload, supplementing Figure~\ref{fig:exp-allocator-storage} of the main paper.  Missing assignments indicate that no admissible bit assignment exists; methods with no feasible points are omitted.}
\label{fig:supp-allocator}
\end{figure}

\begin{table}[tbp]
\centering
\caption{Nonmonotone greedy payload for the algebraic spectrum, in bits. Uniform12 is feasible at $\varepsilon=0.1$ for both sizes.}
\label{tab:supp-nonmonotone}
\begin{tabular}{rrrr}
\toprule
$n$ & Greedy, $0.05$ & Greedy, $0.1$ & Uniform12, $0.1$ \\
\midrule
256 & 444416 & 446464 & 393216 \\
512 & 888832 & 901120 & 786432 \\
\bottomrule
\end{tabular}
\end{table}

Table~\ref{tab:supp-nonmonotone} documents the payload increases when the budget is relaxed.  A larger feasible set cannot worsen the exact optimum, but a recomputed heuristic need not select a better point.  Figure~\ref{fig:paper-precision-map} of the main paper shows the corresponding bitwidth assignments.

\subsection{Available bitwidths and results across seeds}\label{sec:supp-menu}

For each seed in $\{1,7,42,123,202,303,404,505,606,707\}$, Gaussian QR factors have dimensions $200\times25$ and $180\times25$. The same factors are reused across spectra and sets of available bitwidths. Singular values are $\sigma_j=a^{j-1}$ for $a=0.55,0.75,0.90$, $\sigma_j=j^{-1.4}$, or $\sigma_j=1$. The tolerances are $10^{-5},3\cdot10^{-5},10^{-4},3\cdot10^{-4},10^{-3},3\cdot10^{-3}, 10^{-2},3\cdot10^{-2},10^{-1}$, with $B^{\mathrm q}=\varepsilon\|T\|_{\max}$. The sets $B_0,\ldots,B_3$ are defined in Section~\ref{sec:exp-menu} of the main paper. The statistics summarize these ten seeds.

All 3780 feasible combinations of methods, bitwidth sets, and test cases pass the numerical checks $e\le E_{\mathrm{geom}}\le E_{\mathrm{sv}}\le B^{\mathrm q}$, with allowance $512\epsilon_{\mathrm{mach}}\sum_j\sigma_j$ for FP64 evaluation. Here $\epsilon_{\mathrm{mach}}=2^{-52}$ is FP64 machine epsilon. Several combinations reuse the same matrices. The DP minimizes integer factor payload subject to the same error bound.

\begin{table}[tbp]
\centering\small
\caption{Greedy payload relative to DP in the rank-25 study of available bitwidths. Statistics use each set's feasible subset; Figure~\ref{fig:exp-menu} of the main paper uses paired subsets.}
\label{tab:supp-menu}
\begin{tabular}{lrrrr}
\toprule
Set & Feasible/450 & Median $M_{\mathrm G}/M_*$ & 90th percentile & Maximum\\
\midrule
$B_0$ & 180 & 1.076 & 1.197 & 1.554\\
$B_1$ & 180 & 1.135 & 1.301 & 1.596\\
$B_2$ & 450 & 1.273 & 1.658 & 2.587\\
$B_3$ & 450 & 1.314 & 1.745 & 2.587\\
\bottomrule
\end{tabular}
\end{table}

The $B_2\to B_3$ extension increases greedy payload in 80 of 450 paired cases, with maximum factor 1.546; its median factor is one. The three extensions isolate the effects of adding intermediate bitwidths, increasing the largest available bitwidth, and allowing 2-bit assignments. Figure~\ref{fig:supp-menu-spectra} shows the gaps between greedy and DP by spectrum.

\begin{figure}[tbp]
\centering
\includegraphics[width=0.85\linewidth]{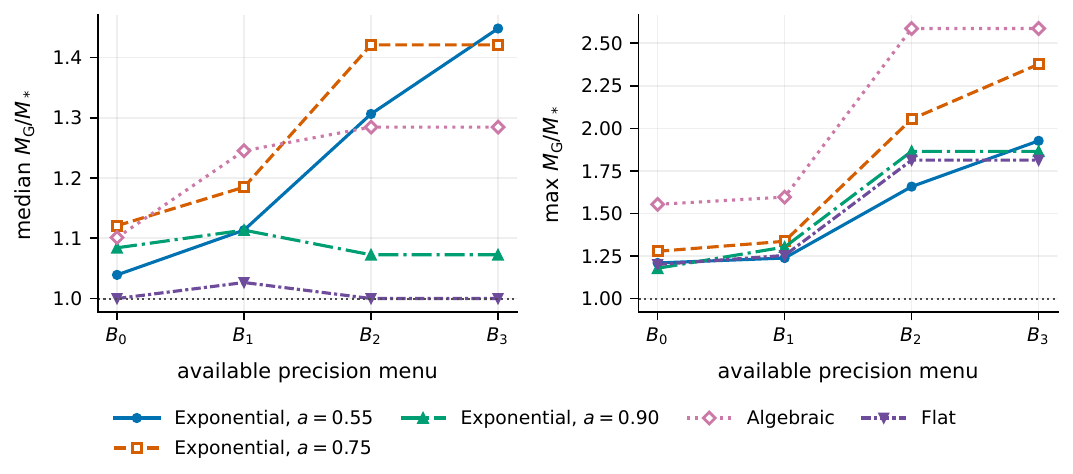}
\caption{Median and maximum greedy/DP payload ratios by spectrum and set of available bitwidths.  Each point aggregates feasible tolerances over ten seeds; the feasible subset can change when the largest available bitwidth changes. These descriptive curves complement the paired ratios in Figure~\ref{fig:exp-menu} of the main paper.}
\label{fig:supp-menu-spectra}
\end{figure}

\subsection{Reserve traces and the feasibility boundary}\label{sec:supp-reserve}

For a failure case, take eight unit singular values, $\mathcal B=\{4,8,16\}$, and $B^{\mathrm q}=6c(4)+c(16)/2$. The greedy baseline without a reserve assigns six components to 4 bits, leaving only $c(16)/2$ for two components whose minimum error contribution is $2c(16)$.  Its final error bound exceeds the budget by $3c(16)/2$. The proposed greedy algorithm instead assigns five components to 4 bits and three to 8 bits, leaving a nonnegative reserve.  The implementation optionally records the effective remaining budget $R-C$ and minimum future error after each accepted component.

\begin{figure}[tbp]
\centering
\includegraphics[width=0.85\linewidth]{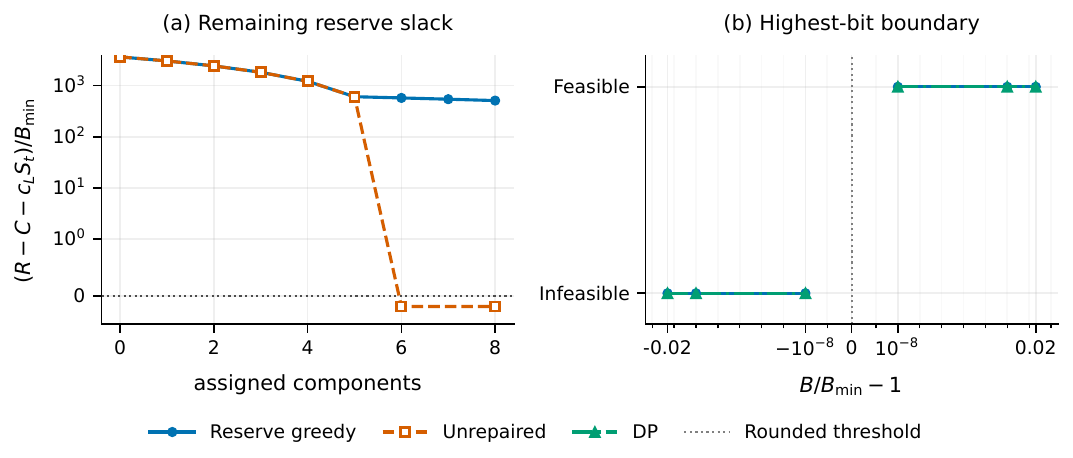}
\caption{Reserve mechanism and boundary diagnostics.  Left: remaining budget after accounting for future components, normalized by $B_{\min}=c(16)\sum_j\sigma_j$ in the eight-component failure case; the vertical scale is symmetric logarithmic with a linear region around zero.  Right: floating-point feasibility decisions for two unit singular values versus relative displacement from the exact threshold.  Both methods agree away from the central point, where rounding affects the decision; that point is omitted from the curves and marked by a vertical dotted line.}
\label{fig:supp-reserve-boundary}
\end{figure}

For the boundary test, use two singular values equal to $s\in\{2^{-40},1,2^{40}\}$. The exact threshold $B_{\min}=2s\bigl(q^{-1}+(4q^2)^{-1}\bigr)$, $q=32767$, is evaluated with rational arithmetic. Requested budget ratios are $0.98,0.999,1-10^{-8},1,1+10^{-8},1.001,1.02$; the algorithms receive their nearest FP64 budgets. Comparing those supplied budgets with the rational threshold, both methods agree with the exact predicate in all 36 evaluations outside the band $|B-B_{\min}|\le64\epsilon_{\mathrm{mach}}B_{\min}$. The six evaluations at the nominal central ratio are reported separately: the rounded budget lies just below the exact threshold, while both floating-point implementations classify it as feasible. Near this threshold, directed rounding or validated arithmetic would be required to classify the supplied budget rigorously.

\subsection{A reciprocal-scaling control}\label{sec:supp-scaling}

For column quantization with the scale $s_x=\|x\|_\infty/q_b$, write $\mathcal{Q}_b(x)=s_x\operatorname{round}(x/s_x)$, with zero mapped to zero. For $d>0$, $s_{dx}=d s_x$ and the normalized entries are unchanged. Thus, in exact arithmetic and for fixed bitwidth assignments,
\begin{equation}
 \mathcal{Q}_{b_j}(d_jp_j)
 \mathcal{Q}_{b_j}(d_j^{-1}q_j)^T
 =\mathcal{Q}_{b_j}(p_j)\mathcal{Q}_{b_j}(q_j)^T.
 \label{eq:supp-scaling-invariance}
\end{equation}
Summing over columns proves invariance of the quantized product under positive reciprocal diagonal rescaling.

The control uses the 50 $(U,\Sigma,V)$ triples from the ten seeds and five spectra in Section~\ref{sec:supp-menu}, with balanced factors $P=U\Sigma^{1/2}$ and $Q=V\Sigma^{1/2}$. The repeated bit pattern is $(4,8,12,16)$, truncated to rank 25. For $e\in\{-100,-50,0,50,100\}$, set $d_j=2^{-e+2e(j-1)/24}$ and define $D=\operatorname{diag}(d_1,\ldots,d_{25})$; reconstruct from $PD$ and $QD^{-1}$. FP64 scales and arithmetic avoid overflow and underflow in this sweep. The maximum change in the quantized product with separate column scales, divided by $\|T\|_{\max}$, is below $4.97\times10^{-16}$, whereas it can exceed one when a scale is shared by a bitwidth group. Figure~\ref{fig:supp-scaling} separates this change from the reconstruction error itself.

\begin{figure}[tbp]
\centering
\includegraphics[width=0.85\linewidth]{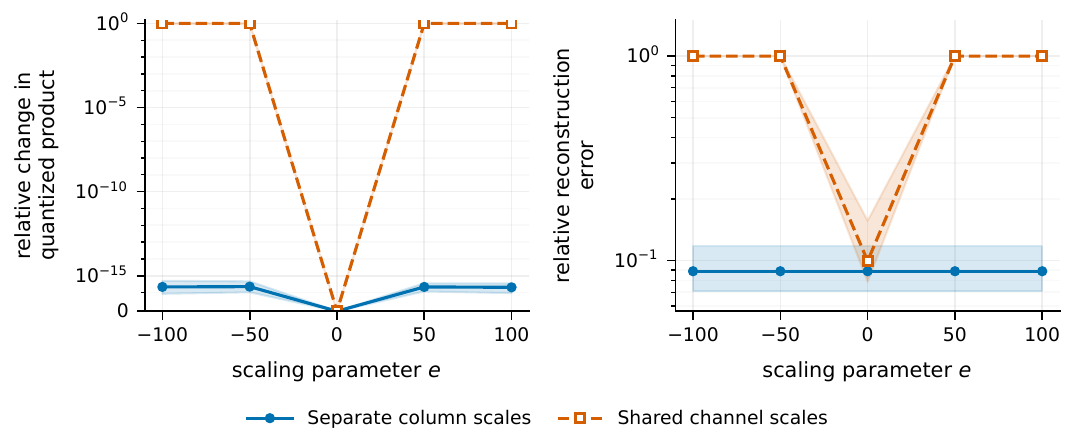}
\caption{Reciprocal positive diagonal rescaling with fixed bitwidth assignments.  Curves show medians and bands the full range over 50 matrices. Left: change from the corresponding unscaled quantized product, divided by $\|T\|_{\max}$.  Right: reconstruction error in the same normalization. The left axis is symmetric logarithmic near zero so exact zeros remain visible.  Separate column scales preserve the ideal product; scales shared within bitwidth groups provide a contrasting control.}
\label{fig:supp-scaling}
\end{figure}

\section{Finite-precision reconstruction}\label{sec:supp-arithmetic}

Theorem~\ref{thm:local-gemm} bounds rounding in the GEMM for each bitwidth group and in the subsequent summation. The implemented check also accounts for rounding when the factors are represented in the working precision, using equation~\eqref{eq:exp-representation} in the main paper. The native decoder uses the more conservative $\gamma_2(u_{\mathrm a})$ allowance for scale multiplication and conversion.  These bounds assume no overflow or underflow.

\begin{figure}[tbp]
\centering
\includegraphics[width=0.7\linewidth]{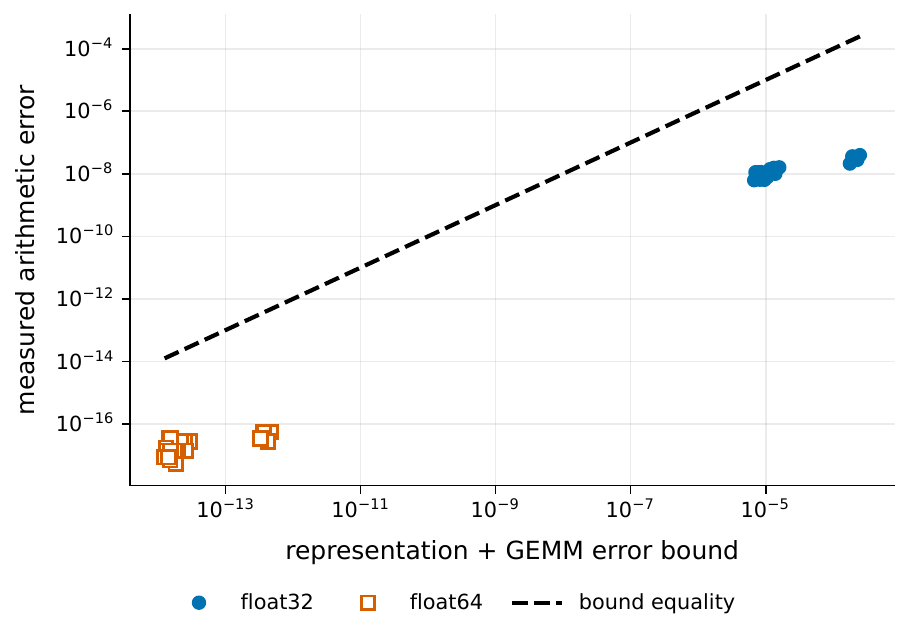}
\caption{Measured arithmetic error versus the bound including rounding of the represented factors and the GEMMs, for FP32 and FP64. The dashed line is equality.}
\label{fig:supp-arithmetic}
\end{figure}

All 48 measured arithmetic errors satisfy their bounds. The largest error-to-bound ratios are below $1.60\times10^{-3}$ for FP32 and $2.37\times10^{-3}$ for FP64. Figure~\ref{fig:supp-arithmetic} shows the gap between measured errors and their bounds. For the hierarchical representation with only diagonal dense blocks and block-relative budgets, quantization accounts for at least 86.0\% of the dominant block's error bound.

\section{Hierarchical comparisons}\label{sec:supp-hierarchical}

The following results concern representations that store only diagonal blocks densely and supplement Table~\ref{tab:exp-hierarchical} of the main paper.  All 1,320 configurations are feasible and have nonzero measured error; no dense off-diagonal fallback occurs.  The summary records satisfy the storage decomposition into diagonal blocks, admissible factors, and non-diagonal inadmissible factors.

\begin{table}[tbp]
\centering
\caption{Compression ratios $C_\varepsilon$ relative to dense FP64, including scales and maps, when only diagonal blocks are stored densely and block-relative budgets are used. The largest tested size is used in each geometry; point clouds are uniform. Ranks and bit allocations are recomputed at each tolerance.}
\label{tab:supp-h-tolerances}
\small
\begin{tabular}{lrrrrr}
\toprule
Matrix & $N$ & $C_{10^{-6}}$ & $C_{0.002}$ & $C_{0.01}$ & $C_{0.1}$ \\
\midrule
2D BEM, circle & 2048 & 21.987 & 34.127 & 39.141 & 44.262 \\
2D BEM, ellipse & 2048 & 20.416 & 32.509 & 37.107 & 42.976 \\
3D BEM, cube & 1536 & 3.134 & 9.236 & 12.524 & 19.849 \\
3D BEM, cuboid & 1536 & 3.454 & 9.626 & 12.908 & 19.596 \\
2D uniform, $1/r$ & 2048 & 5.427 & 14.583 & 19.133 & 27.668 \\
2D uniform, $\log r$ & 2048 & 6.887 & 15.636 & 19.873 & 28.451 \\
3D uniform, $1/r$ & 2048 & 2.146 & 6.806 & 9.828 & 17.370 \\
3D uniform, $\log r$ & 2048 & 1.928 & 6.219 & 8.890 & 16.005 \\
\bottomrule
\end{tabular}
\end{table}

Table~\ref{tab:supp-h-tolerances} includes the less compressible three-dimensional examples alongside planar BEM. At fixed $\varepsilon=0.01$, circle BEM's total compression increases from 6.229 at $N=256$ to 39.141 at $N=2048$.

\begin{table}[tbp]
\centering
\caption{Rank fractions $r_k/\min(m_k,n_k)$ for non-diagonal inadmissible blocks under block-relative budgets.  Medians are unweighted over blocks; maxima expose the least compressible blocks.  Matrix sizes are those in Table~\ref{tab:supp-h-tolerances}.}
\label{tab:supp-nearfield-ranks}
\small
\begin{tabular}{lrrrr}
\toprule
& \multicolumn{2}{c}{$\varepsilon=10^{-6}$} & \multicolumn{2}{c}{$\varepsilon=0.01$} \\
Matrix & Median & Maximum & Median & Maximum \\
\midrule
2D BEM, circle & 0.250 & 0.250 & 0.125 & 0.125 \\
2D BEM, ellipse & 0.203 & 0.250 & 0.094 & 0.125 \\
3D BEM, cube & 0.708 & 1.000 & 0.250 & 0.667 \\
3D BEM, cuboid & 0.583 & 1.000 & 0.250 & 0.625 \\
2D uniform, $1/r$ & 0.500 & 0.812 & 0.188 & 0.406 \\
2D uniform, $\log r$ & 0.344 & 0.656 & 0.156 & 0.344 \\
3D uniform, $1/r$ & 0.781 & 1.000 & 0.281 & 0.594 \\
3D uniform, $\log r$ & 0.844 & 1.000 & 0.281 & 0.594 \\
\bottomrule
\end{tabular}
\end{table}

The rank diagnostics in Table~\ref{tab:supp-nearfield-ranks} qualify the motivation for factorizing near-field blocks.  At $0.01$ they are often of moderate rank, but the two surface BEM cases and both three-dimensional point kernels contain full-rank inadmissible blocks at $10^{-6}$.  These remain quantized factor representations; no small-rank assumption or per-block storage advantage is imposed.

Figure~\ref{fig:supp-hmatrix-storage} contrasts the uniform three-dimensional $\log r$ kernel with circle BEM.  Relaxing $\varepsilon$ from $10^{-6}$ to $0.1$ increases dense-FP64 compression from 1.928 to 16.005 for the point kernel, compared with 21.987 to 44.262 for the circle.  The maximum rank fraction $r_k/\min(m_k,n_k)$ among non-diagonal inadmissible blocks decreases from 1 to 0.406 for the point kernel, and from 0.25 to 0.094 for the circle.  Both rank selection and bitwidth allocation contribute to these storage changes.

\begin{figure}[tbp]
\centering
\includegraphics[width=\linewidth]{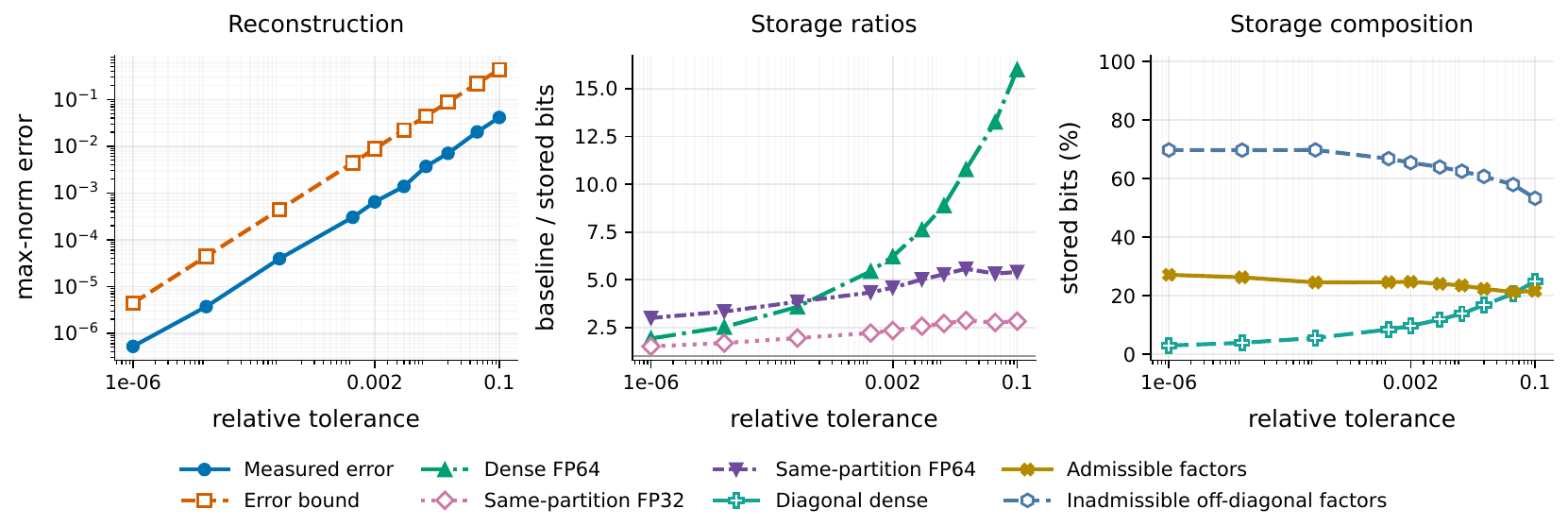}
\caption{Uniform three-dimensional $\log r$ kernel, $N=2048$, with block-relative budgets and diagonal-only dense storage.  Left: max-norm reconstruction error and its bound.  Middle: storage ratios against dense FP64 and same-partition floating factors.  Right: storage shares of diagonal blocks, admissible factors, and non-diagonal inadmissible factors.  Ranks are recomputed at each tolerance.}
\label{fig:supp-hmatrix-storage}
\end{figure}

\begin{figure}[tbp]
\centering
\includegraphics[width=0.89\linewidth]{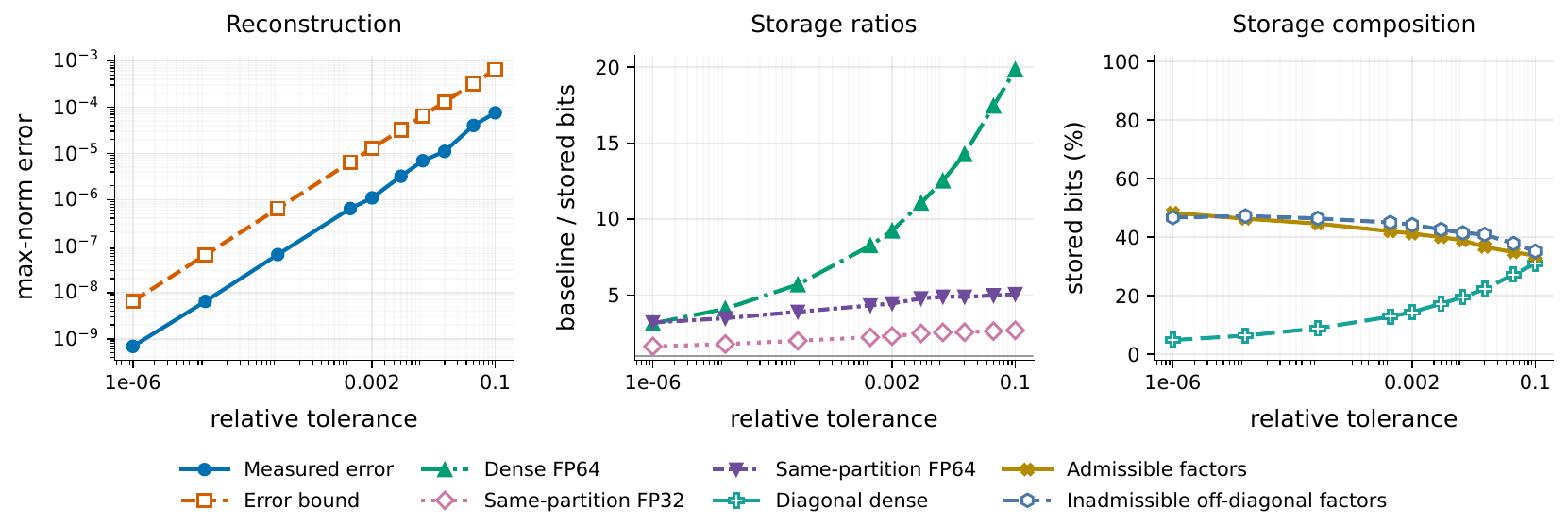}
\caption{Cube BEM, $N=1536$, with block-relative budgets and diagonal-only dense storage.  Panels use the same definitions as Figure~\ref{fig:exp-global} of the main paper.  Near-field factor storage and rank adaptation remain significant even when the complete representation is compressed.}
\label{fig:supp-nearfield}
\end{figure}

At $\varepsilon=0.01$, cube BEM assigns 19.57\% of stored bits to diagonal blocks and 41.49\% to non-diagonal inadmissible factors.  Its scale/map metadata is 17.40\% of total storage, compared with 3.69\% for circle BEM. The metadata fraction is counted within the three block-class shares; it is reported separately to show the contribution of scales and bitwidth labels, which is relatively large for short factors.

For a fixed matrix, the choice of $B_k^{\mathrm{tot}}$ changes the error allowed in each block. Table~\ref{tab:supp-policies} uses clustered two-dimensional $1/r$ at $N=2048$, $\varepsilon=0.01$.  Total budgets are $\varepsilon\|A_k\|_{\max}$ (block-relative), $\varepsilon\max(\|A_k\|_{\max},10^{-3}\|A\|_{\max})$ (hybrid), or $\varepsilon\|A\|_{\max}$ (global-relative).

\begin{table}[tbp]
\centering
\caption{Comparison of block budgets for one matrix whose representation stores only diagonal blocks densely. $e,E$ are absolute global max-norm error and error bound. All three configurations are feasible.}
\label{tab:supp-policies}
\small
\begin{tabular}{lrrr}
\toprule
Block budget & $C_{0.01}$ & $e$ & $E$ \\
\midrule
Block-relative & 20.599 & 0.9088 & 14.3087 \\
Hybrid & 20.921 & 0.9088 & 14.3087 \\
Global-relative & 44.620 & 13.2917 & 52.9982 \\
\bottomrule
\end{tabular}
\end{table}

Hybrid budgets reduce storage while leaving the measured global error and its bound unchanged. Global-relative budgets give greater compression at substantially larger measured error.

The study that retains dense inadmissible blocks uses a fixed relative Frobenius SVD target of $10^{-6}$, rank cap 64, four integer bitwidths, FP32 reconstruction, and FP64 dense fallback. In its eight largest cases at $0.01$, diagonal, near-field, and fallback dense blocks together occupy 88.2--95.6\% of stored bits. The diagonal-only dense study uses different rank selection, bitwidths, and arithmetic. The same-partition controls in the main paper isolate the storage effect of quantization.

\subsection{Fixed-bit quantization controls}\label{sec:supp-granularity}

We compare the adaptive allocation with per-tensor and per-channel fixed-bit quantization on circle BEM, uniform two-dimensional $1/r$, and uniform three-dimensional $\log r$ matrices at $N=512$ and $2048$. All three methods use the same block partition, retained SVD ranks, block-relative budgets, and available bitwidths $\{4,8,12,16,24,32\}$ as the study with only diagonal dense blocks.  For each non-diagonal block, the fixed-bit controls choose the lowest feasible width separately.  With balanced factors $P=U\Sigma^{1/2}$ and $Q=V\Sigma^{1/2}$, the respective bounds on factor-quantization error are
\begin{equation*}
 r\|P\|_{\max}\|Q\|_{\max}c(b),\qquad
 c(b)\sum_{j=1}^r\sigma_j,\qquad
 \sum_{j=1}^r\sigma_jc(b_j).
\end{equation*}
The first uses one scale for each whole factor; the latter two use one scale per retained factor column. All methods reserve the same 2\% of each block budget and use the same measured truncation error. The storage includes integer payload, FP64 scales, bitwidth labels, and the common FP64 diagonal blocks. Reconstruction is evaluated in FP64.

All 180 combinations of methods, matrices, and tolerances are feasible and satisfy their bounds on truncation and quantization error in the numerical checks. Across the 60 matched matrix and tolerance pairs, the ratio of per-channel fixed-bit to adaptive total storage has median 1.252 and range 1.052--1.682; the adaptive representation is smaller in every pair. The ratio of per-tensor fixed-bit to adaptive storage has median 1.122, and the adaptive representation is smaller in 45 of 60 pairs. Per-tensor quantization needs fewer scales and bitwidth identifiers, and its error bound uses the maxima of the actual balanced factors. Figure~\ref{fig:exp-granularity} of the main paper shows the $N=2048$ matrices; Figure~\ref{fig:supp-granularity} shows the corresponding $N=512$ cases. For each block, all three methods use the same quantization budget, but feasibility is assessed using the error bound for each method. Their measured reconstruction errors may differ.

\begin{figure}[tbp]
\centering
\includegraphics[width=0.82\linewidth]{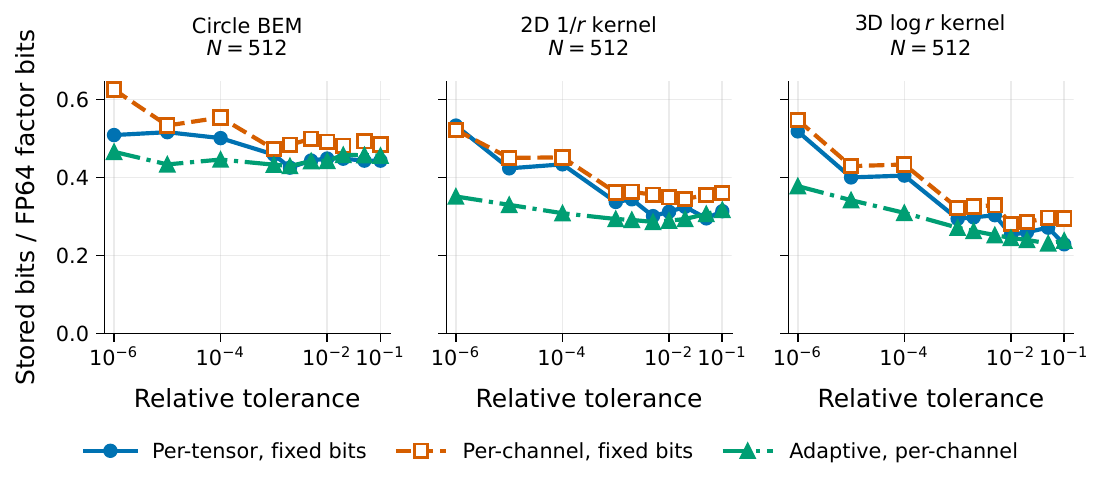}
\caption{Stored bits relative to FP64 factors on the same partition for three $N=512$ matrices with block-relative budgets, complementing Figure~\ref{fig:exp-granularity} of the main paper. Each fixed-bit control uses the lowest feasible width for each non-diagonal block; the adaptive method varies width by retained component. The common FP64 diagonal blocks, factor scales, and bitwidth labels are included. Ranks are identical across the three methods at each tolerance.} \label{fig:supp-granularity}
\end{figure}

\section{Native reconstruction validation}\label{sec:supp-native}

Synthetic outputs have orders 128, 512, and 4096, with ranks 32, 64, 128, and 256 whenever permitted by the dimensions. The 11 synthetic and 68 admissible-block inputs are each tested in six allocation modes: uniform 4-, 8-, 12-, and 16-bit assignments, prescribed mixed-bit assignments, and assignments from the proposed greedy algorithm. The adaptive budget in this suite is $0.002\sum_j\sigma_j$.

Packed reconstruction decodes factors to FP32, performs one cuBLAS GEMM per nonempty bitwidth group, and accumulates the results sequentially. The predecoded path starts from decoded FP32 factors. Strict FP32 disables TF32. The reference is the FP64 product of the saved FP32 factors.

All 1,501 reconstruction records to which the strict arithmetic error bound applies pass their stated allowances. The reconstruction allowance includes rounding in the FP32 factors, the GEMMs for each bitwidth group, and their accumulation, as specified in Section~\ref{sec:supp-arithmetic}. TF32, FP16, and shared-scale INT8 results are reported separately under their respective arithmetic and scaling models.

\section{Numerical validation of hierarchical matrix application}
\label{sec:supp-application}

The first suite evaluates $Y=AX$ with $p$ right-hand sides using the stored hierarchical partition. Non-diagonal inadmissible blocks remain dense, and low-rank blocks are applied through their factors. The ten matrices are circle and ellipse BEM at $N\in\{1024,2048\}$, cube and cuboid BEM at $N=1536$, and the four uniform point-kernel cases ($1/r$ and $\log r$ in two and three dimensions) at $N=2048$. The tolerances are $0.005,0.01,0.05$ and $p\in\{1,8,32,128\}$, giving 120 configurations. The partition, SVD rank threshold, and allocation/fallback decisions come from the stored hierarchical representations. All H-matrix variants use the same partition and ranks. Dense and fallback working blocks use FP32, and three of the 30 matrix and tolerance combinations include dense fallback for admissible blocks. The available bitwidths are $\{4,8,12,16\}$. Application arithmetic uses FP32.

The six paths comprise dense FP32 SGEMM, batched H-FP32, adaptive unpack-plus-batched, adaptive predecoded, and two matched tiled FP32 implementations using floating or packed factors. For a block $k$ with $m$ rows and column cluster $J$ of size $n$, let $X_J\in\mathbb R^{n\times p}$ contain the input rows indexed by $J$. If the block is factorized with retained rank $r$, let $F_k\in\mathbb R^{m\times r}$ and $G_k\in\mathbb R^{n\times r}$ denote its FP32 working factors (floating or decoded). The factorized block applies $F_k(G_k^T X_J)$ directly. All H-matrix paths include permutation, dense-block products, and deterministic output reduction. Packed tiled multiplication decodes weights locally for FP32 products. All six paths disable TF32 and run on the A100 described in Section~\ref{sec:supp-setup}. Each of the 120 configurations is tested with six implementations in five trials, giving 3,600 runs.

Numerical validation uses FP64 applications of the actual FP32 dense matrix, ordinary FP32 factors, or decoded FP32 quantized factors, as appropriate.  These references separate arithmetic error from error in representing the source matrix.  Here $u=2^{-24}$ is FP32 unit roundoff. For a low-rank block, the arithmetic envelope includes
\begin{equation*}
 [(1+\gamma_n(u))(1+\gamma_r(u))-1]
 |F_k|\bigl(|G_k|^T|X_J|\bigr),
\end{equation*}
with the appropriate dense-block term, output-summation rounding, and an FP64 reference allowance. The bounds assume round-to-nearest arithmetic without overflow or underflow. Let $A_k$ denote the source block and $\widehat A_k$ the exact product of its saved FP32 factors, or its FP32 dense working block. The representation error is checked using the measured block residuals $\delta_k=\|A_k-\widehat A_k\|_{\max}$ and $|(A_k-\widehat A_k)X_J| \le \delta_k\boldsymbol{1}_m (\boldsymbol{1}_n^T|X_J|)$, where $\boldsymbol{1}_m$ and $\boldsymbol{1}_n$ are vectors of ones. In all 3,600 runs, the measured arithmetic errors and the application errors bounded using measured block residuals remained below their respective bounds. The largest error-to-bound ratios were below $0.0347$ and $0.1382$, respectively.

The second suite uses the same matrices, tolerances, right-hand-side counts, and trials, with the partition and factor assignments from the diagonal-only dense representation. In this suite, diagonal blocks are stored in FP64. All non-diagonal blocks, including inadmissible blocks, are stored as quantized low-rank factors with bitwidths chosen from $\{4,8,12,16,24,32\}$. Diagonal products accumulate in FP64 before conversion to FP32 for output reduction, and factor products use FP32 arithmetic. The measured errors in all 3,600 runs remained below the corresponding arithmetic and measured-residual propagation bounds. The largest error-to-bound ratios were below $0.0292$ and $0.1895$, respectively.

\end{document}